\numberwithin{equation}{section}
\numberwithin{figure}{section}
\theoremstyle{plain}
\newtheorem{thm}{\protect\theoremname}[section]
\newtheorem{cor}[thm]{Corollary}
\newtheorem{lem}[thm]{\protect\lemmaname}
\newtheorem{conjecture}{\protect\conjecturename}
\newtheorem{question}[conjecture]{Question}
\theoremstyle{definition}
\newtheorem{defn}[thm]{\protect\definitionname}
\newtheorem{hyp}{Hypothesis}
\theoremstyle{remark}
\renewcommand{\limsup}{\varlimsup}
\newcommand{\cC}{\mathcal{C}}
\newcommand{\cE}{\mathcal{E}}
\newcommand{\cS}{\mathcal{S}}
\newcommand{\R}{\mathbb{R}}
\newcommand{\prob}{\mathbb{P}}
\newcommand{\E}{\mathbb{E}}
\newcommand{\eps}{\epsilon}
\newcommand{\eqdist}{\stackrel{(d)}{=}}
\newcommand{\abs}[1]{\lvert#1\rvert}
\newcommand{\norm}[1]{\lvert\lvert#1\rvert\rvert}
\newcommand{\bfn}{\mathbf{n}}
\providecommand{\conjecturename}{Conjecture}
\providecommand{\definitionname}{Definition}
\providecommand{\lemmaname}{Lemma}
\providecommand{\theoremname}{Theorem}
\begin{document}
\title{Shattering versus metastability in spin glasses}

\author{G\'erard Ben Arous}
\address{G.\ Ben Arous\hfill\break
Courant Institute of Mathematical Sciences\\ New York University\\
New York, NY, USA.}
\email{benarous@cims.nyu.edu}

\author{Aukosh Jagannath}
\address{A.\ Jagannath\hfill\break
Department of Statistics and Actuarial Science \\ 
Department of Applied Mathematics\\
University of Waterloo\\
Waterloo, ON, Canada.} 
\email{a.jagannath@uwaterloo.ca}

\begin{abstract}
Our goal in this work is to better understand the relationship between replica symmetry breaking, shattering, and metastability.
To this end, we study the static and dynamic behaviour of spherical pure $p$-spin glasses above the replica symmetry breaking temperature $T_{s}$.  In this regime, we find that there are at least two distinct temperatures related to non-trivial behaviour. First we prove that there is a regime of temperatures in which the spherical $p$-spin model exhibits a shattering phase. Our results holds in a regime above but near  $T_s$.
We then find that metastable states exist up to an even higher temperature $T_{BBM}$ as predicted by Barrat--Burioni--M\'ezard
which is expected to be higher than the phase boundary for the shattering phase $T_d <T_{BBM}$. We develop this work by first developing a Thouless--Anderson--Palmer decomposition which builds on the work of Subag. 
We then present a series of questions and conjectures regarding the sharp phase boundaries for shattering
and slow mixing.
\end{abstract}

\maketitle

\section{Introduction}
We study here the static and dynamic behaviour of spherical pure $p$-spin glasses in a range of temperatures above the replica symmetry breaking temperature, $T_s$. The understanding of the statics in the glassy phase below this temperature is now quite complete using the classical Parisi approach \cite{MPV87} via a variational formula for the free energy, which in this setting is given by the Crisanti--Sommers formula \cite{CriSom92}. This approach shows that, below $T_s$, the model exhibits what is called ``one-step replica symmetry breaking'' (1 RSB). This approach was made rigorous in the mathematics literature by Talagrand \cite{TalSphPF06}  for even $p$-spin models  and by Chen \cite{ChenSph13} for all $p$ building on the work of Guerra \cite{Guerra2003} and on the works of Aizenman--Sims--Starr \cite{aizenman2003extended} and Panchenko \cite{PanchPF14} respectively. This description can also be completed, and made more geometric, using the more recent understanding  of the topological complexity of the energy landscape \cite{ABC13,ABA13}.  Indeed Subag proved \cite{SubGibbs16} that the Gibbs measure concentrates on bands around the deepest minima and that the free energy (and Gibbs mass) of these bands is equivalent to that of the total system at sufficiently low temperature (see also \cite{arous2020geometry} for an application of this approach to the problem of chaos in temperature).

We concentrate here on a geometric description of the free energy landscape in a range of temperatures \emph{above} this static transition. It is well-known
in the physics literature that another transition occurs at a higher temperature, usually called the dynamical temperature, $T_d >T_s$, and which we refer to here as $T_{sh}$. This temperature was initially introduced by Kirkpatrick and Thirumalai \cite{KT87} as the temperature below which the Langevin dynamics are slow when started from a random point.

This dynamical temperature also has an interesting, purely static interpretation, as the onset of the ``shattering phase", where the free energy landscape is shattered: in this regime,  the free energy of the system is equivalent to that given by a union of an exponentially large number of such bands, whose free energies (and Gibbs masses) are all exponentially small. This is in direct contrast to the aforementioned 1RSB phase. We prove here that the shattering phase exists in an interval $(T_{s},T_{0})$ where $T_0 \leq T_{sh}$ and conjecture that $T_0=T_{sh}$. These bands are disjoint and centred on critical points of the energy.  Our approach to proving shattering of the free energy landscape thus naturally connects to the topological complexity of this landscape and begins by a computation of the TAP free energy using this complexity which extends the work of Subag \cite{SubGibbs16} to a broader range of temperatures.

The shattering phase emerged long ago in a series of work in the physics literature by Kirkpatrick--Thirumalai \cite{KT87}, Kurchan--Parisi--Virasoro \cite{KurParVir93}, and Barrat--Burioni--M\'ezard \cite{barrat1996dynamics}. It was later studied in great depth for many important problems related to sparse, mean-field models of spin glasses and central questions from Theoretical Computer Science and combinatorics, such as random constraint satisfaction and combinatorial optimization problems. See \cite{dembo2013factor, COP19}, \cite{MPZ02,KMRTSZ07,ACRT11,DSS15}, and \cite{achlioptas2008algorithmic,DSS16,sly2016reconstruction}  respectively for a necessarily small sample of such works. In this work, we return to the  dense case of spherical $p$-spin models following the early and fundamental paper by Barrat, Burioni, and M\'ezard \cite{barrat1996dynamics}.

We then turn to dynamical questions. In \cite{barrat1996dynamics}, Barrat--Burioni--M\'ezard introduced another important temperature $T_{BBM}> T_{sh}$ related to metastability of the Langevin dynamics. We show here that below this temperature, the spectral gap is exponentially small, and moreover that there are are exponentially many bands centred on critical points from which the Langevin dynamics takes an exponentially long time to escape. We conjecture that this temperature is the right threshold and that above it, the spectral gap is bounded below with high probability, and relate this conjecture to our recent work on the spectral gap \cite{BAJag17} using the so-called ``two-replica" potential. To study the connection between the free energy landscape, spectral gaps, and exit times, we use here the free energy landscape approach developed by Gheissari and the authors in several works \cite{GJ16,BAJag17,BGJ18b}.

\section{Main results and discussion}

Our goal in this work is to better understand the relationship
between replica symmetry breaking, shattering, and metastability in
spin glasses through the lens of the complexity of the free energy landscape. 
To do so, we focus on a simple class of models, 
the spherical $p$-spin glass models, where one can provide a geometric
perspective on these questions. These models are defined as follows.

Let $\cS_{N}=\{ x\in\R^{N}:\norm{x}_{2}=\sqrt{N}\} $
and for $p\geq 1$ consider the \emph{$p$-spin Hamiltonian}, $H_{N,p}:\cS_{N}\to\R$,
which is given by 
\[
H_{N,p}(x)=\frac{1}{N^{\frac{p-1}{2}}}\sum_{i_{1}\cdots i_{p}=1}^NJ_{i_{1}\cdots i_{p}}x_{i_{1}}\cdots x_{i_{p}},
\]
where $J_{i_{1}\cdots i_{p}}$ are i.i.d. standard Gaussians. We will also be interested in \emph{the Langevin dynamics at temperature $T>0$} for the $p$-spin Hamiltonian, namely the solution to 
the stochastic differential equation
\[
\begin{cases}
dX_t = dB_t - \frac{1}{T} \nabla H_{N,p}(X_t) dt \\
X_0 = x,
\end{cases}
\]
where here $B_t$ is spherical Brownian motion and $\nabla$ is the usual covariant derivative.
Let $Q_x$ denote  the law of  $X_t$ when started from $X_0=x$ and let 
$L$ denote the infinitesimal generator for $X_t$, namely
\[
L=\frac{1}{2} \Delta -\frac{1}{T} \langle\nabla H_{N,p},\nabla \cdot\rangle.
\] 
Here $\langle\cdot,\cdot\rangle$ denotes the induced metric on $\cS_N$.
Recall that $X_t$ is reversible with invariant measure given by the Gibbs measure,
$\pi_T(dx)\propto\exp(-\frac{1}{T}H(x))dx$, where here and in the following $dx$ refers to the uniform measure
on $\cS_N$.

We begin our analysis by developing a Thouless--Anderson--Palmer decomposition
for the free energy landscape of spherical $p$-spin models.
We then use this decomposition to prove a Barrat--Burioni--M\'ezard-type lower bound,
a lower bound for the free energy of the total system that was first developed non-rigorously
in \cite{barrat1996dynamics}. We then turn to understanding
the implications of these results for the shattering of the free energy landscape and metastability
of the corresponding dynamics. We end this section with an extended discussion of the relationship between these two concepts.

\subsection{A Thouless--Anderson--Palmer decomposition}
The starting point of our analysis is to develop a Thouless--Anderson--Palmer
type decomposition for the free energy landscape. Before we can state
this result  we need to first recall the following notions regarding
free energies, complexities, and mixed $p$-spin glass models.

For a Borel set $A\subseteq\cS_{N}$
and an inverse temperature $\beta=T^{-1}>0$, let the \emph{restricted
free energy} be
\[
F_{N}(A;\beta)=\frac{1}{N}\log Z_{N,\beta}(A)=\frac{1}{N}\log\int_{A}e^{-\beta H_{N}(x)}dx,
\]
where $dx$ is the uniform measure on
$\cS_{N}$. The total free energy at inverse temperature $\beta=T^{-1}>0$
is then $F_{N}(\cS_{N};\beta)=F_{N}(\beta).$ For
$x,y\in\cS_{N}$, define their \emph{overlap} as $R(x,y)=(x\cdot y)/N,$ where $ \cdot$ 
denotes the usual Euclidean inner product
and for a point $x$, let $B(x,q,\eta)$ denote the band
\[
B(x,q,\eta)=\left\{ y\in\cS_{N}:\abs{R(x,y)-q}\leq\eta\right\}.
\]

Now recall the following results regarding
the complexity of spherical spin glasses. 
For a Borel set $A\subseteq\cS_N$,
let 
\[
\cC_{N}(A)=\left\{ x\in\cS_{N}:\frac{1}{N}H_{N,p}(x)\in A,\nabla H_{N,p}=0\right\} 
\]
denote the collection of critical points of $H_{N}$ with (normalized)
energy below $E$ and let $|\cC_{N}(A)|$ denote the cardinality of this set.
Recall the \emph{complexity}
$\Theta_{N}:\R\to\R$,
\[
\Theta_{N}(E)=\frac{1}{N}\log|\cC_{N}((-\infty,E))|,
\]
and the \emph{asymptotic complexity} from \cite{ABC13}, 
\[
\Theta(E)=\begin{cases}
\frac{1}{2}\log(p-1)-\frac{p-2}{4(p-1)}E^{2}-\frac{2}{E_{\infty}^{2}}\int_{E}^{E_{\infty}}(z^{2}-E_{\infty}^{2})^{1/2}dz & E\leq E_{\infty}\\
\frac{1}{2} \log(p-1) - \frac{p-2}{4(p-1)} E^2 & E_{\infty}\leq E\leq 0,\\
\frac{1}{2}\log(p-1), & u\geq 0.
\end{cases}
\]
where $E_{\infty}=-2\sqrt{(p-1)/p}.$ Let $E_{0}$ denote the
zero of $\Theta(E)$. Note that $E_{0}<E_{\infty}$ since the
asymptotic complexity is strictly increasing for $E\leq E_{\infty}$.
We will mainly be concerned with the first regime, $E\leq E_\infty$, in this paper.
To understand the importance of the asymptotic complexity, recall
that by combining the asymptotic complexity calculation of Auffinger, \v{C}erny, and one of us \cite{ABC13}
with the second moment computation of Subag \cite{subag2017complexity},
we have that 
$\Theta_{N}(E)\to\Theta(E)$
in probability for each $E_{0}\leq E\leq E_{\infty}$. (In fact, those results
provide much sharper convergence results, some of which we will use
in the following.) See also the work of Auffinger--Gold \cite{auffinger2020number} for more refined information on the topological complexity of the landscape.

We now need to introduce a one-parameter family of mixed $p$-spin models whose importance,
to our knowledge, was first observed by Subag in \cite{SubGibbs16} and we call here the \emph{co-dimension 1 models}.  
Let $0\leq q\leq 1$ and let $\tilde{H}_{q,N}(x):\cS_{N-1}\to\R$ be given by
\begin{equation}
\tilde{H}_{q,N}(x)=\sum_{k=2}^{p}\alpha_{k}(q)\sqrt{\frac{N}{N-1}}H_{N-1,k}(x)\label{eq:h-tilde}
\end{equation}
where $\alpha_{k}(q)=\sqrt{{p \choose k}(1-q^{2})^{k}}q^{p-k}$. 
(Note that the dimension of the sphere here has changed.)
The co-dimension 1 models are, effectively, the model restricted to a co-dimension 1 sphere which has latitude $q$ with respect to a critical point. For a more precise statement see  \cite[Sec. 4]{SubGibbs16} or Section~\ref{sec:Free-energy-of-rings} below.
We denote the limiting free energy of these models by 
\begin{equation}
F_2(q,\beta) = \lim \frac{1}{N}\log \int_{\cS_{N}} e^{-\beta \tilde{H}_{q,N+1}(x)}dx.
\end{equation}
The almost sure existence of this limit is a consequence of the Crisanti--Sommers formula for general
mixed $p$-spin models on the sphere developed by Chen \cite{ChenSph13}. Finally,
let 
\begin{equation}\label{eq:rate-func-spherical}
I(x) = - \frac{1}{2}\log(1-x^2).
\end{equation} 
With this in hand, we define the \emph{Thouless--Anderson--Palmer free energy} to be 
\begin{equation}
F_{TAP}(E,q,\beta) = -\beta q^p E+F_2(q,\beta) - I(q).
\end{equation}
We note here that $F_{TAP}$ is an extension of what is usually called the TAP free energy in the physics literature
to a broader range of overlaps $q$. (We explain this connection momentarily.)

We are now in the position to develop a TAP decomposition for the free energy.
In the following, for two sequences of random variables $(X_{N})$ and $(Y_{N})$,
we say that $X_{N}\geq Y_{N}+o_{\mathbb{P}}(1)$ if $X_{N}\geq Y_{N}+W_{N}$
for some sequence of random variables $(W_{N})$ with $W_{N}\to0$ in
probability. 
\begin{thm}
\label{thm:ring-FE-lower-bound} 
Let $p\geq 4$. For any $r>0$, there is a $\delta(r)$ such that for any 
$E\in(E_0,E_0+\delta)$, any $\sqrt{(1+r)/2}<q<1$, and any $\beta>0$, there are sequences
$\epsilon_{N},\eta_{N}\to0$ and a sequence of (random) sets $A_N\subseteq\cC_N(E-\eps_N,E+\eps_N)$
with:
\begin{align*}
\frac{1}{N}\log\abs{A_N} &= \Theta(E)+o_\mathbb{P}(1)\\
F_N\left(\cup_{x\in A_N}B(x,q,\eta_{N});\beta\right) & = F_{TAP}(E,q,\beta)+\Theta(E)+o_{\mathbb{P}}(1)\\
\sup_{x\in A_N}\abs{F_N(B(x,q,\eta_N);\beta)-F_{TAP}(E,q,\beta)} &= o_{\mathbb{P}}(1),
\end{align*}
and such that the balls $\{B(x,q,\eta_{N})\}_{x\in A_N}$ are
pairwise disjoint and their centres satisfy $R(x,y)<r$ with probability tending to 1.
\end{thm}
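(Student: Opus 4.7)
The plan is to combine the asymptotic complexity estimates of \cite{ABC13,subag2017complexity} with an extension of Subag's band decomposition \cite{SubGibbs16} from the bottom energy $E_0$ to the slightly higher interval $(E_0, E_0 + \delta)$, and to upgrade the single-band free energy asymptotic to hold uniformly over an exponential family of well-separated critical points. The argument breaks naturally into three tasks.

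\emph{Extracting the family $A_N$.} The Kac--Rice first moment gives $|\cC_N(E-\eps_N,E+\eps_N)| = e^{N\Theta(E)+o(N)}$ with high probability for a suitable sequence $\eps_N \to 0$. A pair-complexity (second moment) computation, in the spirit of \cite{subag2017complexity} but now localized at an energy $E$ slightly above $E_0$, should show that the number of pairs of critical points with overlap at least $r$ is of strictly smaller exponential order. Discarding the endpoints of such pairs trims $\cC_N(E-\eps_N,E+\eps_N)$ to a set $A_N$ of the same exponential cardinality whose pairwise overlaps are all less than $r$. The hypothesis $q > \sqrt{(1+r)/2}$ is then the precise geometric condition that makes the bands $B(x,q,\eta_N)$ for $x \in A_N$ pairwise disjoint: writing $y = q'x + \sqrt{1-(q')^2}\,z$ with $z \in x^{\perp}$, one has $R(y,x') \leq q'r' + \sqrt{1-(q')^2}\sqrt{1-(r')^2}$ whenever $R(x,x') = r' < r$, which under the hypothesis is strictly less than $q - O(\eta_N)$.

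\emph{Free energy on a single band.} For $x \in A_N$, parameterize $y \in B(x,q,\eta_N)$ by $y = q' x + \sqrt{1-(q')^2}\,z$ with $z$ in the unit $(N-2)$-sphere of $x^{\perp}$ and $q' \in [q-\eta_N, q+\eta_N]$. Expanding $y^{\otimes p}$ in the disorder yields the decomposition
\[
H_{N,p}(y) = (q')^p H_{N,p}(x) + \sqrt{\tfrac{N}{N-1}}\,\tilde H_{q',N}(z) + \text{corrections},
\]
where the corrections are linear in $\nabla H_{N,p}(x)$ plus smooth curvature terms, and hence are $o(N)$ uniformly since $x$ is critical (cf.\ \cite[Sec.\ 4]{SubGibbs16}). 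The band volume factor is $e^{-N I(q) + o(N)}$, and by \cite{ChenSph13} one has $\tfrac1N \log \int_{\cS_{N-1}} e^{-\beta \tilde H_{q,N}(z)}\,dz \to F_2(q,\beta)$ almost surely. Assembling these contributions gives $\tfrac1N \log Z_N(B(x,q,\eta_N);\beta) \to F_{TAP}(E,q,\beta)$.

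\emph{Uniformity and assembly.} The main obstacle is to upgrade the previous pointwise asymptotic to hold simultaneously over all of $A_N$: since $|A_N| = e^{N\Theta(E)+o(N)}$, a union bound requires per-band deviation probabilities smaller than $e^{-N\Theta(E)}$, which the second moment alone does not supply. I would obtain this from Gaussian concentration on the conditional law of the orthogonal mixed $p$-spin Hamiltonian $\tilde H_{q,N}$ given the values of $H_{N,p}(x)$ and $\nabla H_{N,p}(x)$: since the free energy of a mixed $p$-spin model on $\cS_{N-1}$ is Gaussian-Lipschitz in its disorder with constant $O(1/\sqrt{N})$, it satisfies $\mathbb{P}(|F - \mathbb{E} F| > t_N) \leq e^{-cNt_N^2}$. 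This beats the union bound provided $t_N^2 \gg \Theta(E)/c$, a condition that can be met with $t_N \to 0$ precisely because $\Theta(E)$ is small in the regime $E \in (E_0, E_0+\delta)$; this is also where the restriction $p \geq 4$ and the existence of $\delta(r)$ enter. Once uniform single-band control is in hand, disjointness gives
\[
F_N\!\left(\textstyle\bigcup_{x\in A_N} B(x,q,\eta_N);\beta\right) = \tfrac1N \log \sum_{x \in A_N} Z_N(B(x,q,\eta_N);\beta) = \Theta(E) + F_{TAP}(E,q,\beta) + o_{\mathbb{P}}(1),
\]
completing all three estimates in the statement.
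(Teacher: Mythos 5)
Your overall architecture is right — complexity plus pair--separation to carve out $A_N$, the conditional law around a critical point to reduce a band free energy to the co-dimension~1 model plus the entropic volume factor, and then some device to get simultaneous control over all the bands. Steps one and two match the paper in spirit (the separation input is \cite[Cor.~13]{SubGibbs16}, and the single-band computation is exactly the conditional-law argument of \cite[Sec.~4]{SubGibbs16} together with Gaussian concentration of restricted free energies). The problem is step three.

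The union bound cannot close. Gaussian concentration of the conditional band free energy gives per-point deviation probability $e^{-cNt_N^2}$, and there are $e^{N\Theta(E)+o(N)}$ critical points, so the union bound requires $c t_N^2 > \Theta(E)$. But $E>E_0$ is \emph{fixed} in the statement, hence $\Theta(E)$ is a fixed positive constant, hence $t_N \geq \sqrt{\Theta(E)/c}$ is bounded away from zero; you do not get $o_{\mathbb{P}}(1)$. Your remark that this ``can be met with $t_N\to 0$ because $\Theta(E)$ is small'' conflates ``small'' with ``$\to 0$''; it is small but it does not vanish with $N$. (Relatedly, the restrictions $\delta(r)$ and $p\geq 4$ have nothing to do with making a union bound close: $\delta(r)$ arises because the essential $r$-separation of critical points is only known near $E_0$, and the $p\geq 4$ vs.\ $p=3$ distinction in the paper concerns the admissible range of $q$ downstream, not this theorem's proof, which works verbatim for $p\geq 3$.)

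The correct mechanism is a first-moment (Kac--Rice) plus Markov argument, not a union bound. One bounds the \emph{expected number} of critical points at energy $\approx E$ whose band free energy deviates from $F_{TAP}$ by more than some fixed small $\delta$; by the conditional-law reduction and \cite[Lemma~14]{SubGibbs16} this expectation is at most $e^{N(\Theta(E)-c\delta^2+o(1))}$, i.e.\ a factor $e^{-cN\delta^2}$ below the total count. By Markov, with probability $\geq 1 - e^{-cN\delta^2/2}$ (say), the number of bad critical points is at most an exponentially small fraction of $|\cC_N(E-\eps,E+\eps)|$, so one discards them. This only requires showing that the \emph{fraction} of bad bands is small — no control is claimed over every critical point — and the cost of discarding does not affect $\tfrac1N\log|A_N|$. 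The resulting uniform error is $O(\delta)$; sending $\delta\to 0$ slowly in $N$ by diagonalization (together with $\eps_N,\eta_N\to 0$) then upgrades this to $o_{\mathbb{P}}(1)$. The same discard step absorbs the pairs with overlap $\geq r$ so that $A_N$ ends up both well-separated and uniformly TAP-like, and the disjointness of the bands for $q>\sqrt{(1+r)/2}$ proceeds exactly as you wrote.
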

This result shows that for energy levels near $E_0$ and any temperature $T>0$,
most of the critical points at that energy level are well-separated, the bands around them 
are disjoint, the free energies of these bands are (asymptotically) the TAP free energy, $F_{TAP}$,
and the free energy of the system restricted to the union these bands is given by the TAP free energy of such a band plus
the complexity of that energy, $\Theta(E)$, i.e., the exponential rate of such bands. We expect this restriction in energy levels 
to be an artifact of our proof technique and that this result holds for all $E_0\leq E\leq E_\infty$.
For more on this see the discussion in \prettyref{sec:discussion} below.
For a discussion of the restriction on $q$ and the case $p=3$ see \prettyref{sec:p=3} below.

With this decomposition in hand, we can now begin to investigate the core questions of this paper,
namely, the relationship between shattering and metastability in spin glasses. Before turning to this discussion
let us briefly pause to comment on the proof of this result. 

Our proof of \prettyref{thm:ring-FE-lower-bound} is inspired by the work of Barrat--Burioni--M\'ezard \cite{barrat1996dynamics} and Subag \cite{subag2017complexity,SubGibbs16}, the latter
following \cite{ABC13}. 
In particular, we extend Subag's analysis to a broader range of temperatures and energies by leveraging
more refined results related to free energies of mixed $p$-spin models and their corresponding variational formulas
recently developed by several authors \cite{TalSphPF06,AuffChen13,JagTob16boundingRSB}.
We note here that the TAP free energy investigated here, $F_{TAP}$, is equivalent to that derived by
Subag in his deep analysis \cite{subag2018free}, though our derivation, following \cite{SubGibbs16} is slightly different.
For the case $p=2$ via the TAP approach see the work of Belius--Kistler \cite{belius2019tap}.

\subsection{The Barrat--Burioni--M\'ezard lower bound}\label{sec:BBM-bound-intro}
In their fundamental study Barrat, Burioni, and M\'ezard \cite{barrat1996dynamics} introduced a lower bound
for the free energy of the total system in terms of the TAP decomposition.
We prove a weaker form of this lower bound as a consequence of the preceding decomposition. 
To state this result, let us recall  the following definitions.

For any $E_{0}\leq E\leq E_{\infty}$,
let $\beta_{*}(E)$ be the smallest $\beta$ such that there is a
strictly positive solution to the equation 
\begin{equation}
(1-q^{2})q^{p-2}=\frac{1}{2\beta(p-1)}\left(-E-\sqrt{E^{2}-E_{\infty}^{2}}\right)\label{eq:fixed-point}
\end{equation}
and for any $\beta\geq \beta_*(E)$, let $q_*(E,\beta)$ denote the corresponding solution,
and for $\beta\geq \beta_{*}(E_\infty)$, let $q_{**}(\beta) =q_*(E_\infty,\beta)$.  Observe 
that $\beta_*(E)$ is decreasing in $E$, so that $\beta_*(E_0)\leq \beta_*(E)$ for all $E\geq E_0$.
We define the \emph{Barrat--Burioni--M\'ezard} (BBM) temperature to be 
\[
T_{BBM} = \beta_*(E_0)^{-1}.
\]
Define the  \emph{replica symmetric Thouless--Anderson--Palmer (TAP) free energy}:
 \begin{equation}
F_{RS}(E,q,\beta)=-\beta q^{p}E-I(q)+\frac{1}{2}\left\{\beta^{2}\left(1-q^{2p}-pq^{2p-2}(1-q^{2})\right)\right\} .\label{eq:FRS}
\end{equation}
We note that in the physics literature it is more common to refer to  $F_{RS}$ as the TAP free energy see, e.g., \cite{KurParVir93,barrat1996dynamics,CastCav05}.
As we shall show in \prettyref{cor:ftap-is-frs} below,  for $T<\beta_*^{-1}(E)$ and $q\geq q_*(E,\beta)$ we have that $F_{TAP}(E,q,\beta)=F_{RS}(E,q,\beta).$

For $T\leq T_{BBM}$, let
\[
\cE_T = \{ E \in [E_0,E_\infty]: T<\beta_*(E)^{-1}\},
\]
and define
\begin{equation}\label{eq:BBM-func}
F_{BBM}(\beta) = \max_{\substack{E\in\cE_T \\ q\in [q_*(E,\beta),1]}} F_{RS}(E,q,\beta)+\Theta(E).
\end{equation}
Let us also define the following modification
of $F_{BBM}$.
Let $\cE_{T,r}=\cE_T\cap\{E\in[E_0,E_0+\delta_0(r)]: q_*(E,\beta)>\sqrt{(1+r)/2}\}$ and 
\[
U(\beta) = \sup_{r>0} \sup_{\substack{E\in\cE_{T,r} \\ q\in [q_*(E,\beta),1]}} F_{RS}(E,q,\beta)+\Theta(E),
\]
where here $\delta_0$ is as in \prettyref{thm:shattering}.

It was predicted in \cite{barrat1996dynamics}, that $F_{BBM}(\beta)$ is a lower bound for the total free energy, $F(\beta),$
for all temperatures below $T_{BBM}$. 
As an immediate consequence of \prettyref{thm:ring-FE-lower-bound}, we obtain the following.
 \begin{cor}[Barrat--Burioni--M\'ezard lower bound]
For $p\geq 4$
and $T<T_{BBM}$ we have
\begin{equation}\label{eq:BBM}
F(\beta)\geq U(\beta).
\end{equation}
\end{cor}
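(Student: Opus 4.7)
The plan is to observe that the corollary is essentially immediate from Theorem~\ref{thm:ring-FE-lower-bound} combined with the stated identification $F_{TAP}=F_{RS}$ on the appropriate range and concentration of the free energy. The strategy has four steps: produce a lower bound on the total free energy from the TAP bands, pass to the deterministic limit, replace $F_{TAP}$ by $F_{RS}$, and optimize.

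For the first step, fix $r>0$, $E\in(E_0,E_0+\delta(r))\cap\cE_T$ with $q_*(E,\beta)>\sqrt{(1+r)/2}$, and $q\in[q_*(E,\beta),1)$. Since $\cup_{x\in A_N}B(x,q,\eta_N)\subseteq\cS_N$, monotonicity of the partition function under set inclusion gives
\[
F_N(\beta)\;\geq\; F_N\!\left(\cup_{x\in A_N}B(x,q,\eta_N);\beta\right),
\]
which by Theorem~\ref{thm:ring-FE-lower-bound} equals $F_{TAP}(E,q,\beta)+\Theta(E)+o_{\mathbb{P}}(1)$. This is where all the work is hidden: the band structure, the disjointness, the complexity count, and the computation of the ring free energy are all used here.

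For the second step, one uses the fact that $F_N(\beta)$ concentrates around its almost sure limit $F(\beta)$ (for example via the Crisanti--Sommers formula, or more directly from Gaussian concentration for the Hamiltonian). Since the right-hand side is deterministic, the inequality $F_N(\beta)\geq F_{TAP}(E,q,\beta)+\Theta(E)+o_{\mathbb{P}}(1)$ upgrades to
\[
F(\beta)\;\geq\; F_{TAP}(E,q,\beta)+\Theta(E).
\]
For the third step, on the parameter range $E\in\cE_T$ and $q\geq q_*(E,\beta)$, the result cited as Corollary~\ref{cor:ftap-is-frs} yields $F_{TAP}(E,q,\beta)=F_{RS}(E,q,\beta)$, so the displayed bound becomes $F(\beta)\geq F_{RS}(E,q,\beta)+\Theta(E)$.

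Finally, fixing $r>0$ and taking the supremum of the right-hand side over $(E,q)\in\cE_{T,r}\times[q_*(E,\beta),1]$ (the intersection of the range in \eqref{eq:BBM-func} with the admissibility window of Theorem~\ref{thm:ring-FE-lower-bound}), and then the supremum over $r>0$, gives $F(\beta)\geq U(\beta)$. The only minor point to check is that sending $q\to 1$ within $[q_*(E,\beta),1)$ is harmless because $F_{RS}(E,\cdot,\beta)+\Theta(E)$ is continuous at $q=1$, so the supremum over the half-open interval agrees with that over $[q_*(E,\beta),1]$. There is no serious obstacle: all the analytic difficulty has been absorbed into Theorem~\ref{thm:ring-FE-lower-bound} and the identification $F_{TAP}=F_{RS}$; the corollary is a matter of bookkeeping the supremum and invoking concentration.
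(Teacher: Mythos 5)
Your proof is essentially correct and takes the same route as the paper, which dispatches this corollary as an immediate consequence of Theorem~\ref{thm:ring-FE-lower-bound} (more precisely, via \prettyref{cor:FE-ring-LB}, which already packages the theorem together with the identification $F_{TAP}=F_{RS}$ from \prettyref{cor:ftap-is-frs}); the chain monotonicity $\rightarrow$ concentration $\rightarrow$ $F_{TAP}=F_{RS}$ $\rightarrow$ supremum is exactly the intended argument.

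One small slip worth flagging: you justify passing from $q\in[q_*(E,\beta),1)$ to the closed interval $[q_*(E,\beta),1]$ by asserting that $F_{RS}(E,\cdot,\beta)+\Theta(E)$ is continuous at $q=1$. It is not: since $I(q)=-\tfrac12\log(1-q^2)\to+\infty$ as $q\to1^-$, we have $F_{RS}(E,q,\beta)\to-\infty$. The conclusion you want — that the supremum over $[q_*,1)$ coincides with that over $[q_*,1]$ — is still correct, but the reason is that the boundary value is $-\infty$ and therefore cannot contribute to the supremum, not continuity. A similarly harmless boundary issue occurs at $E=E_0$, where Theorem~\ref{thm:ring-FE-lower-bound} gives the estimate only for $E\in(E_0,E_0+\delta)$ while $\cE_{T,r}$ includes $E_0$; here genuine continuity of $E\mapsto F_{RS}(E,q,\beta)+\Theta(E)$ does the job.
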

This result show us that the TAP free energy plus the corresponding complexity, when restricted to energy levels near $E_0$ and overlaps $q\geq q_*(E,\beta)$, is a lower bound for the free energy at all temperatures. 
At this point, the curious reader will of course ask if this lower bound is tight. 
In their work, Barrat--Burioni--M\'ezard also predicted that below a different temperature
this lower bound is tight. This is related to the phenomenon of shattering
which we discuss now.

\subsection{The shattering phase}
One of our main results is the proof of the existence
of a shattering phase in spherical $p$-spin models. 
To state this result, let us begin by first providing a precise
notion of shattering and recalling the notion of replica symmetry breaking.

A band $B(x,q,\eta)$ is \emph{$c$-subdominant} for some $c>0$ if
\[
\pi_T(B(x,q,\eta))\leq \exp(-cN).
\]
Note that in terms of free energies this can be equivalently written as 
\[
F_N(\beta)-F_N(B(x,q,\eta);\beta)>c.
\]
We can now define the notion of shattering. We
tailor our definition to the precise form of shattering that occurs
here. 
\begin{defn}
\label{def:shattering} For fixed $T>0, E\in\R, r\geq 0,$ and $0<q<1$, we say the free
energy landscape is $(E,q,r)$-\emph{shattered} at temperature $T$
if there are $c,c'>0$ such that for some sequence
$\epsilon_N,\eta_N,\delta_N\to0$ we have that the following occurs with probability tending to $1$:
there is a sequence of sets $A_{N}\subseteq\cC_{N}([-E-\epsilon_N,-E+\epsilon_N])$,
such that for $\beta = T^{-1}$,
\begin{enumerate}
\item (positive complexity) $\frac{1}{N}\log\abs{A_{N}}\geq c$,
\item (separation) for all distinct $x,y\in A$, we have that $B(x,q,\eta_N)\cap B(y,q,\eta_N)=\emptyset$
and that $R(x,y)<r$,
\item (sub-dominance) and for each $x\in A$, the band $B(x,q,\eta_N)$ is $c'$-subdominant,
\[
F_N(\beta)-F_N(B(x,q,\eta_N);\beta)>c'>0.
\]
\item (free energy equivalence) Furthermore,  we have that
\[
F_{N}(\beta)-F_{N}(\cup_{x\in A_N}B(x,q,\eta_N),\beta)\to0
\]
in probability.
\end{enumerate}
\end{defn}
Informally, shattering occurs when there are exponentially many regions which are well-separated and 
whose combined free energy is equivalent to that of the total
system, but each of which has exponentially small mass with respect to the Gibbs measure. In this setting, we will choose these regions to be bands around critical points of a certain energy. We note here, however, that shattering of the free energy
landscape does not imply that the Gibbs measure is supported only on those bands. For more on this see \prettyref{sec:discussion} below. We also note here that the notion of shattering used by Aclioptas--Coja-Oglan \cite{achlioptas2008algorithmic} (at zero temperature) is slightly stronger in that it has an additional condition on the change in energy along paths from one band to another.\footnote{A similar statement can be shown in this setting and is implicit in our study of metastability however as it is not necessary for our discussion we do not include this here.}

Next we turn to replica symmetry breaking. We do not provide
a detailed description of the replica symmetry breaking picture here.
For this we refer the reader to the texts  \cite{MPV87,MM09,PanchSKBook}
and the many deep works in recent years on the rigorous understanding of the replica symmetry breaking phase of mean field spin glass models,
see, e.g., \cite{BoltSznit98,Arg08,PanchUlt13,AuffChen13,jagannath2017approximate,SubGibbs16,AufJag16}
for a necessarily small selection. Let us instead recall the simpler, analytical characterization
of replica symmetry which suffices for our purposes:
the $p$-spin model is said to be in the \emph{replica symmetric} phase
if the limit of the total free energy, $F(\beta)$, satisfies $F(\beta)=\frac{\beta^{2}}{2}$,
and otherwise it is said to be in the \emph{replica symmetry breaking}
phase. 
Let $T_{s}$ be is the phase
boundary for the replica symmetric phase, namely
\begin{align*}
T_{s} & =\max\left\{ T>0:F(T^{-1})=\frac{1}{2T^{2}}\right\} .
\end{align*}
That $T_{s}$ is positive and finite was shown in \cite{TalSphPF06}. We then
define $T_{sh}$ to be 
\[
T_{sh}=\sqrt{p\frac{(p-2)^{p-2}}{(p-1)^{p-1}}}.
\]
We note that in the physics literature $T_{sh}$ is more commonly called $T_{d}$ 
or the ``dynamical replica symmetry breaking phase transition"  \cite{BCKM98,CastCav05,MM09}.
(We discuss this in more detail in Section~\ref{sec:discussion} below.)
Note that, as a consequence of our analysis,  $T_{s}<T_{sh}<T_{BBM}$. 
For the reader's convenience we include an alternative, direct proof in Appendix \ref{sec:teperatures-not-equal}.

We now turn to our main results regarding shattering. 
Evidently from \eqref{eq:BBM}, shattering will occur
if that bound is tight and a maximizing energy $E$ has positive complexity. 
We prove this by an explicit computation.
\begin{thm}
\label{thm:FRS=FTAP} For every $p\geq 4$ here is a $T_{s}< T_{0}\leq T_{sh}$
such that for all $T\in(T_{s},T_{0})$ we have that for $\beta=T^{-1}$,
\begin{align}
F(\beta)= U(\beta) =\beta^2/2.\label{eq:FRS-FTAP}
\end{align}
Furthermore for such $T$, the maximum in \eqref{eq:BBM-func} is achieved at a pair $(E,q)=(E(\beta),q(\beta))$
with $q=q_*(E,\beta)$ and $E_0<E\leq E_\infty$ which satisfies
$E = - \beta(q^p +p(1-q^2)q^{p-2}),$
and such that the map $\beta\mapsto (E(\beta),q(\beta))$ is continuous and has $E(\beta)\to E_0$ as $\beta\to\beta_{s}$.
\end{thm}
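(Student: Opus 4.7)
The plan is to combine an upper bound already in hand with an explicit construction realising the matching lower bound. For $T > T_s$, the definition of the replica-symmetric phase gives $F(\beta) = \beta^2/2$, and the Barrat--Burioni--M\'ezard corollary above yields $U(\beta) \leq F(\beta) = \beta^2/2$. The substance of the theorem is therefore to exhibit a pair $(E, q)$ with $E \in \cE_{T,r}$ and $q \in [q_*(E, \beta), 1]$ for some $r > 0$ such that $F_{RS}(E, q, \beta) + \Theta(E) = \beta^2/2$; this will simultaneously supply $U(\beta) \geq \beta^2/2$ and identify the maximiser in \eqref{eq:BBM-func}.

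I would locate the pair by studying the critical points of $\Phi(E, q, \beta) = F_{RS}(E, q, \beta) + \Theta(E)$ on the boundary curve $q = q_*(E, \beta)$. Since $\partial_E F_{RS} = -\beta q^p$, the $E$-stationarity condition is $\Theta'(E) = \beta q^p$. Using the explicit formula
\[
\Theta'(E) = \frac{1}{2(p-1)}\bigl[-(p-2)E + p\sqrt{E^2 - E_\infty^2}\bigr]
\]
together with the defining equation for $q_*$, namely $2\beta(p-1)(1-q^2)q^{p-2} = -E - \sqrt{E^2 - E_\infty^2}$, adding the two relations eliminates the square root and collapses to the stated condition $E = -\beta(q^p + p(1-q^2)q^{p-2})$. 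Substituting this back into $F_{RS}$ produces
\[
F_{RS}(E, q, \beta) = \frac{\beta^2}{2} + \frac{\beta^2}{2}\bigl[q^{2p} + p q^{2p-2}(1-q^2)\bigr] - I(q),
\]
so the identity $F_{RS}(E, q, \beta) + \Theta(E) = \beta^2/2$ reduces to
\[
\Theta(E) = I(q) - \tfrac{\beta^2}{2}\bigl[q^{2p} + p q^{2p-2}(1-q^2)\bigr]
\]
along the critical curve; this identity must be verified from the explicit formulas for $\Theta$ and $I$ combined with the $q_*$ constraint.

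For existence and boundary behaviour, I expect the solution curve to extend continuously to $\beta_s$ with $E(\beta_s) = E_0$ and $q(\beta_s) \in (0, 1)$. Because $\Theta(E_0) = 0$, the required identity at $\beta_s$ collapses to $F_{RS}(E_0, q, \beta_s) = \beta_s^2/2$, which combined with $q = q_*(E_0, \beta_s)$ determines $q(\beta_s)$. An implicit function theorem argument applied to the two-equation system $\{q = q_*(E, \beta),\ E = -\beta(q^p + p(1-q^2)q^{p-2})\}$ then produces a continuous curve $\beta \mapsto (E(\beta), q(\beta))$ for $\beta$ in a right neighbourhood of $\beta_s$, with $E(\beta) > E_0$, $E(\beta) \to E_0$ as $\beta \to \beta_s$, and $q(\beta)$ close to $q(\beta_s)$. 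Since $q(\beta_s) \in (0,1)$, continuity gives $q_*(E(\beta), \beta) > \sqrt{(1+r)/2}$ for some $r > 0$, placing the pair in $\cE_{T,r}$ for all such $\beta$ and completing the construction.

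The principal obstacle is the algebraic identity reducing $\Phi$ to $\beta^2/2$ along the critical curve: the cancellation is nontrivial and requires careful manipulation of $\sqrt{E^2 - E_\infty^2}$ against the $q_*$ constraint and the integral representation of $\Theta$. A secondary subtlety is the nondegeneracy of the Jacobian at $\beta = \beta_s$ needed to push the implicit function solution up to the boundary, together with the consistency of the two defining equations precisely at $\beta_s$. The endpoint $T_0 \leq T_{sh}$ should then arise as the first temperature at which the critical-point curve either exits the admissible region or leaves the energy range $E \in (E_0, E_\infty]$ on which the explicit form of $\Theta$ used above is valid.
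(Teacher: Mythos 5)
Your outline is sound in its broad structure (upper bound from replica symmetry, lower bound by exhibiting a good critical point of $V = F_{RS} + \Theta$, existence via implicit function theorem anchored at $\beta_s$), and you correctly derive the $E$-stationarity relation $E = -\beta(q^p + p(1-q^2)q^{p-2})$ by combining $\Theta'(E) = \beta q^p$ with the $q_*$ fixed-point equation. However, the proof contains a genuine gap at precisely the step you flag as the ``principal obstacle'': you reduce matters to the identity
\[
\Theta(E) = I(q) - \tfrac{\beta^2}{2}\bigl[q^{2p} + p q^{2p-2}(1-q^2)\bigr]
\]
along the critical curve, but you do not verify it, and a direct verification would require untangling the integral representation of $\Theta$ against the $q_*$ constraint --- substantially harder than anything else in the argument.

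The paper sidesteps this entirely by an envelope-theorem device that you should internalize: rather than checking $V(E(\beta),q(\beta),\beta) = \beta^2/2$ pointwise, set $G(\beta) = \max V$ over a small box, observe that the first-order conditions make $G$ differentiable with $G'(\beta) = \partial_\beta V(E(\beta),q(\beta),\beta)$, and compute that derivative. Since $\partial_\beta F_{RS} = -q^p E + \beta\bigl(1-q^{2p}-pq^{2p-2}(1-q^2)\bigr)$ and $\Theta$ has no $\beta$-dependence, plugging in the stationarity relation $E = -\beta(q^p + p(1-q^2)q^{p-2})$ gives $G'(\beta) = \beta$ by an immediate telescoping cancellation --- no manipulation of $\Theta$ needed. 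Integrating from $\beta_s$, where $G(\beta_s) = \beta_s^2/2$ is known (this is the boundary identity $F_{RS}(E_0,q_*(E_0,\beta_s),\beta_s) = \beta_s^2/2$ from Subag, which you also cite), gives $G(\beta) = \beta^2/2$ with no further algebra. Thus the role of the first-order conditions is not to feed an identity to be checked, but to cancel all nontrivial terms in $\partial_\beta V$. Your ``secondary subtlety'' --- nondegeneracy of the Jacobian at $\beta_s$ --- is also a genuine computation that must be carried out (the paper does it by showing $\det D^2 V > 0$ with $\partial_q^2 V, \partial_E^2 V < 0$, which simultaneously certifies that the critical point is an interior maximum and that the IFT applies); as stated you have only asserted it. With both of those pieces supplied, your route would work, but the envelope-theorem version converts the hard computation into a one-line cancellation.
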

With this in hand we see that if we let $(E(\beta),q(\beta))$ be any optimal pair from in \prettyref{thm:FRS=FTAP}
we obtain the following.
\begin{thm}
\label{thm:shattering} For every $p\geq4$, there is a $T_{0}>0$
with $T_{s}< T_{0}\leq T_{sh}$  and an $r>0$ such that the free energy landscape is
$(E(\beta),q(\beta),r)$-shattered with probability tending to 1 for all $T_{s}<T\leq T_{0}$. 
\end{thm}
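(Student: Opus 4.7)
The plan is to feed the optimizer $(E(\beta),q(\beta))$ of Theorem \ref{thm:FRS=FTAP} into the TAP decomposition (Theorem \ref{thm:ring-FE-lower-bound}) and transcribe the output into the four items of Definition \ref{def:shattering}. The key identity driving everything is
\[
F(\beta) \;=\; U(\beta) \;=\; F_{TAP}(E(\beta),q(\beta),\beta) + \Theta(E(\beta)),
\]
supplied by Theorem \ref{thm:FRS=FTAP} together with the equality $F_{TAP}=F_{RS}$ on $\{q\ge q_*(E,\beta)\}$. Items (3) and (4) will then follow from the uniform band and union free-energy estimates of Theorem \ref{thm:ring-FE-lower-bound} by subtraction against $F_N(\beta)\to F(\beta)$.

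The first task is to select $r$ and $T_0$ so that Theorem \ref{thm:ring-FE-lower-bound} is applicable at $(E(\beta),q(\beta))$ for every $T\in(T_s,T_0)$. By Theorem \ref{thm:FRS=FTAP}, $\beta\mapsto (E(\beta),q(\beta))$ is continuous and $E(\beta)\to E_0$ as $\beta\to\beta_s$, so by the fixed-point equation \eqref{eq:fixed-point} we also have $q(\beta)\to q_*(E_0,\beta_s)$. Because $T_s<T_{BBM}$, we have $\beta_s>\beta_*(E_0)$, and so the right-hand side of \eqref{eq:fixed-point} at $(E_0,\beta_s)$ lies strictly below the maximum of $(1-q^2)q^{p-2}$; a direct inspection of that equation then yields $q_*(E_0,\beta_s)>\sqrt{(p-2)/p}\ge 1/\sqrt{2}$ for $p\ge 4$. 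We may therefore fix $r>0$ with $\sqrt{(1+r)/2}<q_*(E_0,\beta_s)$ and, by continuity, choose $T_0\in(T_s,T_{sh}]$ small enough that for every $T\in(T_s,T_0)$ one has $E(\beta)\in(E_0,E_0+\delta(r))$, $q(\beta)>\sqrt{(1+r)/2}$, and $\Theta(E(\beta))>0$, where $\delta(r)$ is the constant from Theorem \ref{thm:ring-FE-lower-bound} and the last inequality uses strict positivity of $\Theta$ on $(E_0,E_\infty]$.

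With these choices, Theorem \ref{thm:ring-FE-lower-bound} produces a (random) set $A_N$ and sequences $\eps_N,\eta_N\to 0$ such that the bands $\{B(x,q(\beta),\eta_N)\}_{x\in A_N}$ are pairwise disjoint with pairwise overlaps $<r$ and $\frac{1}{N}\log|A_N|=\Theta(E(\beta))+o_{\mathbb P}(1)$. Items (1) and (2) of Definition \ref{def:shattering} follow immediately, with $c:=\Theta(E(\beta))/2$. For item (4), combining the union estimate with the key identity above,
\[
F_N(\beta)-F_N\bigl(\cup_{x\in A_N} B(x,q(\beta),\eta_N);\beta\bigr) \;=\; \bigl[F_N(\beta)-F(\beta)\bigr]+o_{\mathbb P}(1)\;=\;o_{\mathbb P}(1).
\]
For item (3), the uniform band estimate gives $F_N(B(x,q(\beta),\eta_N);\beta)=F_{TAP}(E(\beta),q(\beta),\beta)+o_{\mathbb P}(1)$ uniformly in $x\in A_N$, whence $F_N(\beta)-F_N(B(x,q(\beta),\eta_N);\beta)=\Theta(E(\beta))+o_{\mathbb P}(1)\ge\Theta(E(\beta))/2=:c'>0$ with probability tending to one.

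The main obstacle is the coupling between $r$ and the admissible energy window in Theorem \ref{thm:ring-FE-lower-bound}: $\delta(r)$ shrinks with $r$, so one cannot first fix $E(\beta)$ near $E_0$ and then shrink $r$. The resolution is to fix $r$ first using the boundary value $q_*(E_0,\beta_s)$, which is strictly above $1/\sqrt{2}$ for $p\ge 4$ precisely because $\beta_s>\beta_*(E_0)$, and only afterwards to shrink $T_0-T_s$ to bring $E(\beta)$ into $(E_0,E_0+\delta(r))$. The continuity of the optimizer $(E(\beta),q(\beta))$ in $\beta$ furnished by Theorem \ref{thm:FRS=FTAP} is exactly what enables this two-step selection.
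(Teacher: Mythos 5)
Your proposal is correct and follows essentially the same route as the paper: invoke Theorem~\ref{thm:FRS=FTAP} for the optimizer $(E(\beta),q(\beta))$ and its continuity with $E(\beta)\to E_0$, restrict $T$ near $T_s$ so that $E(\beta)$ falls in the admissible window for the TAP decomposition, and then read off the four shattering conditions by subtracting the band and union free-energy estimates against $F_N(\beta)\to F(\beta)=U(\beta)$. The only cosmetic difference is that the paper packages the $F_{TAP}=F_{RS}$ step and the $r$--$\delta$ bookkeeping through Corollary~\ref{cor:FE-ring-LB} (using that $r(\beta),\delta(\beta)$ are non-decreasing in $\beta$), whereas you resolve the $r$ versus $\delta(r)$ coupling directly by fixing $r$ from the limit value $q_*(E_0,\beta_s)>1/\sqrt{2}$ and then shrinking $T_0-T_s$.
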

\noindent This result shows that at moderate temperatures, the $p$-spin
model is replica symmetric but the free energy landscape is shattered.
In particular, it verifies the existence of the shattering phase that as predicted
in the physics literature.
Our restriction to $T$ near $T_s$ is related to the disjointness issue discussed after 
\prettyref{thm:ring-FE-lower-bound}. In particular, 
it is expected that $T_{sh}$ is in fact the sharp phase boundary; see Conjecture \ref{conj:shattering} and the surrounding discussion. We discuss this in more detail momentarily, however, let us first
examine the implications of this approach to metastability in spin glasses.

\subsection{Metastability}
The computation of the TAP free energy also has important consequences for the dynamics of spin glasses. 
In particular, we observe that $T_{sh}$ is not the onset of metastability. Instead, we find that metastable states exist up until $T_{BBM}$. We will study metastability from two standpoints: exit times from sub-dominant sets and spectral gaps.
To state our results we need the following definitions. 

For a set $E$, we let $\pi_T(dx\vert E)$ denote the Gibbs measure conditioned on $E$.
For a point $x$
and a pair, $(q,\eta)$, let $\mathcal{Q}_N(x,q,\eta,T,u)$ denote the probability that Langevin dynamics
at temperature $T$ exits the band $B(x,q,\eta)$ before time $\exp(N u)$
when started within that band,
\[
\mathcal{Q}_N(x,q,\eta,u) = \int Q_y(\tau_{B(x,q,\eta)^c}\leq e^{N u}) \pi_T(dy\vert B(x,q,\eta)).
\]
Finally,  
let $0=\lambda_0(\beta)\leq \lambda_1(\beta)\leq ....$ denote the ranked eigenvalues of $-L$.
We then have the following.

\begin{thm}\label{thm:metastability-main}
For every $p\geq 4$ and $T<T_{BBM}$ there are $E_0<E<E_\infty$, $0<q<1,$
$h,c,C,\eta_0>0$, and a sequence $\epsilon_N\to0$ such that for $\eta<\eta_0$ the following holds  with probability tending to 1: 
there is an $A_{N}\subseteq\cC_{N}(E-\epsilon_N,E+\epsilon_N)$ with $\frac{1}{N}\log |A_N|\geq C $,
such that for every $x\in A_{N}$, we have that the band $B(x,q,\eta)$ is $c$-subdominant and for any $0\leq\theta<1$,
\[
\sup_{x\in A_N} \mathcal{Q}_N(x,q,\eta,\theta h)\leq e^{-N(1-\theta)h}.
\]
Furthermore, for such $T$, there are $C',c'>0$ such that 
\[
\prob(-c' <\frac{1}{N} \log \lambda_1(T) \leq -C') \to 1.
\]
\end{thm}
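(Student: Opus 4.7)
The plan is to deduce both assertions from the TAP decomposition of Theorem~\ref{thm:ring-FE-lower-bound} via the free-energy-landscape approach of \cite{GJ16,BAJag17,BGJ18b}. Since $T<T_{BBM}=\beta_*(E_0)^{-1}$ the set $\cE_T$ is a non-empty open interval above $E_0$; because $p\geq 4$ forces the coalescence value $q_*(E_0,\beta_*(E_0))=\sqrt{(p-2)/p}\geq 1/\sqrt{2}$, continuity and monotonicity of $q_*(E,\cdot)$ past $\beta_*(E)$ let me pick an $r>0$ and an $E\in(E_0,E_0+\delta(r))\cap\cE_T$ with $q:=q_*(E,\beta)>\sqrt{(1+r)/2}$. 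Theorem~\ref{thm:ring-FE-lower-bound} then supplies, with probability tending to $1$, a random set $A_N\subseteq\cC_N(E-\eps_N,E+\eps_N)$ of cardinality $e^{N\Theta(E)+o_{\mathbb{P}}(N)}$, together with disjoint bands $B_x:=B(x,q,\eta_N)$ satisfying $F_N(B_x;\beta)=F_{TAP}(E,q,\beta)+o_{\mathbb{P}}(1)$ uniformly in $x\in A_N$. The Barrat--Burioni--M\'ezard lower bound \eqref{eq:BBM} combined with this identity gives $F_N(\beta)-F_N(B_x;\beta)\geq \Theta(E)+o_{\mathbb{P}}(1)$, so each $B_x$ is $c$-subdominant with $c=\Theta(E)/2>0$.

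For the exit-time estimate I follow the bottleneck strategy of \cite{BAJag17,BGJ18b}: I compare the Gibbs mass of $B_x$ to that of the annular shell $A_\delta(x):=\{y\in B_x:\eta_N-\delta<\abs{R(x,y)-q}<\eta_N\}$. The co-dimension 1 machinery behind Theorem~\ref{thm:ring-FE-lower-bound}, applied at centred overlap $q\pm\eta_N$ in place of $q$, identifies the per-site free energy on $A_\delta(x)$ as $F_{TAP}(E,q\pm\eta_N,\beta)+o_{\mathbb{P}}(1)$. A direct computation from \eqref{eq:fixed-point} (differentiating $F_{RS}$ twice and eliminating $E$ via the quadratic structure of the fixed-point equation) shows that $q_*(E,\beta)$ is a strict local \emph{maximum} of $q'\mapsto F_{TAP}(E,q',\beta)$ on $(\sqrt{(p-2)/p},1)$, so the barrier
\[
h:=F_{TAP}(E,q,\beta)-F_{TAP}(E,q\pm\eta,\beta)
\]
is strictly positive. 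The standard capacity bound for reversible diffusions on $\cS_N$ then yields $\mathbb{E}_{\pi_T(\cdot\vert B_x)}[\tau_{B_x^c}]\geq e^{Nh+o(N)}$; Markov's inequality at time $e^{N\theta h}$ converts this into $\mathcal{Q}_N(x,q,\eta,\theta h)\leq e^{-N(1-\theta)h+o(N)}$ uniformly in $x\in A_N$, which is the stated bound after absorbing the $o(N)$ loss into $h$.

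The spectral-gap upper bound is obtained from the same shell estimate: Cheeger's inequality applied with test set $B_x$ (using $\pi_T(B_x)\leq e^{-cN}<1/2$) gives $\lambda_1(T)\leq \mathrm{poly}(N)\cdot e^{-Nh}$, hence $\frac{1}{N}\log\lambda_1(T)\leq -C'$ with high probability. The matching lower bound $\lambda_1(T)\geq e^{-c'N}$ is more elementary: writing $\pi_T=Z_{N,\beta}^{-1}e^{-\beta H_{N,p}}dx$ as a Holley--Stroock perturbation of the uniform measure on $\cS_N$ (whose Laplace--Beltrami gap is of order $1$), and invoking $\max_{\cS_N}\abs{H_{N,p}}\leq C_pN$ with exponentially high probability by Borell--TIS, yields $\lambda_1(T)\geq e^{-2\beta\,\mathrm{osc}(H_{N,p})}\geq e^{-c'N}$.

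The principal obstacle is to establish the shell free-energy estimate $F_N(A_\delta(x);\beta)=F_{TAP}(E,q\pm\eta_N,\beta)+o_{\mathbb{P}}(1)$ \emph{uniformly} over the exponentially large random set $A_N$, as required by the supremum in the theorem. The annealed version is a routine adaptation of the proof of Theorem~\ref{thm:ring-FE-lower-bound}; promoting it to a quenched, uniform-in-$x$ bound calls either for a second-moment computation on the conditional co-dimension 1 model at the shifted overlap $q\pm\eta_N$ in the spirit of \cite{subag2017complexity}, or for a stochastic covering argument along the lines of \cite{BAJag17,BGJ18b}. A secondary point is to verify that the strict local maximum property of $F_{TAP}(E,\cdot,\beta)$ at $q=q_*(E,\beta)$ is uniform across the range of $(E,q)$ selected in the first step and survives the $o(1)$ errors inherent in Subag's decomposition.
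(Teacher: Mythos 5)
Your overall strategy---use the TAP decomposition, identify $q_*(E,\beta)$ as a strict local maximum of the per-band free energy, build a free-energy well at $q_*$ flanked by valleys at $q_1<q_*<q_2$, and deduce long exit times and a small spectral gap via the landscape approach of \cite{GJ16,BAJag17,BGJ18b}---matches the paper's proof. The Holley--Stroock lower bound on $\lambda_1$, the use of \eqref{eq:q-star-strict-max}, and the subdominance via $\Theta(E)>0$ are all on target.

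However, the ``principal obstacle'' you flag is indeed the crux of the proof, and you do not resolve it; moreover, the two fixes you propose (a second-moment computation on the conditional co-dimension 1 model, or a stochastic covering argument) are both misdirections. What the paper actually does is apply Lemma~\ref{lem:no-bad-bands} with $n=3$: the first-moment Kac--Rice/tameness bound of \cite[Lemma 14]{SubGibbs16}, combined with the per-band concentration \eqref{eq:band-concentration}, is run \emph{simultaneously} over the finite family of overlaps $q_1<q_*<q_2$. The conclusion is that the set of critical points $x\in\cC_N(E-\eps,E+\eps)$ for which \emph{any one} of the three band free energies deviates from its TAP value by more than $\delta$ has size exponentially smaller than $\abs{\cC_N(E-\eps,E+\eps)}$. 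One then simply \emph{defines} $A_N$ to be the complement of that bad set (intersected with the separated set from Theorem~\ref{thm:ring-FE-lower-bound}), and $\abs{A_N}=e^{N\Theta(E)+o_\mathbb{P}(N)}$ is retained. No second moment and no chaining are needed: the quantifier is moved from ``for a fixed $x$, w.h.p.'' to ``for all $x$ in a suitably chosen exponential subset of $\cC_N$'', and the union bound is absorbed into the complexity count. This is precisely what gives the uniform supremum bound in the theorem statement (and produces Lemma~\ref{lem:feb-crit} in the paper).

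Two smaller issues. First, the passage from $\E_{\pi_T(\cdot\mid B_x)}[\tau_{B_x^c}]\geq e^{Nh+o(N)}$ to $\cQ_N(x,q,\eta,\theta h)\leq e^{-N(1-\theta)h+o(N)}$ via ``Markov's inequality'' is false as stated: a lower bound on $\E[\tau]$ gives no upper bound on $\mathbb{P}(\tau\leq T)$ (consider a mixture of $\tau=0$ and a very large value). What is needed and what the paper invokes is the direct escape-probability bound of \cite[Theorem 7.4]{BGJ18b}, reproduced as \eqref{eq:few-bound}, which estimates $\int Q_x(\tau_{B^c}\leq T)\,\pi(dx\mid B)$ itself in terms of the depth of the free-energy well, not merely its expectation. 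Second, for the spectral-gap upper bound you propose Cheeger with test set $B_x$; this can likely be made to work, but it again reduces to the same shell estimate to bound the boundary conductance relative to $\pi(B_x)$, so it does not route around the obstacle. The paper instead applies the $\epsilon$-free-energy-barrier criterion \eqref{eq:feb-bound} from \cite{BAJag17} to the 1-Lipschitz observable $f=R(\cdot,x_0)$, choosing a macroscopic-mass interval $(a,b)$ for $f$ and placing the bottleneck at $q_1$ or $q_2$, which avoids any direct conductance computation.
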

\noindent The first part of this result shows that for $T<T_{BBM}$, there are exponentially many bands $\{B(x,q,\eta_N)\}_{x\in A_N}$ 
which all have exponentially small Gibbs mass, but such that started within any such bands, Langevin dynamics
takes exponential time to escape. The second shows that similarly, the spectral gap is exponentially small with probability tending to 1.  It is interesting to note here that it is the lowest energies that govern metastable behaviour at high temperatures. Indeed,
since the map $E\mapsto \beta_*(E)^{-1}$ is decreasing, $T_{BBM}$ is the highest temperature at which the bands a $q_*$ are well-defined exists and it corresponds to the onset of the existence of these bands at $E_0$. This is to be contrasted with 
the shattering result from \prettyref{thm:shattering} in which the lowest energies only govern the shattering phase near $T_s$.
This is consistent with the work of Barrat--Burioni--M\'ezard \cite{barrat1996dynamics} which argued
that the shattering transition is governed by the bands around the highest energy levels (which are the most numerous but, in a sense, the least stable in temperature) whereas
the metastability transition is governed by the bands around the lowest energy levels (which are the least numerous but 
the most stable in temperature).

\subsection{Discussion: Shattering versus Metastability}\label{sec:discussion}
Let us now turn to the interpretation of these results, specifically a discussion of
what they say about the relation between  the temperatures $T_{s},T_{sh},$ and $T_{BBM}$. 

Theorem \ref{thm:shattering}~shows that spherical $p$-spin models exhibit a shattering phase and, more precisely,
that shattering occurs in a range of temperatures with a lower endpoint that is at least
$T_{s}$. 
This phase is one of the hallmarks of glassy systems and we expect that it is the only regime
where the exponential complexity of the landscape of spin glass models
is clearly felt for the statics of $p$-spin models. 
To our knowledge, this is the first rigorous result regarding a shattering
transition in a spin glass model at positive temperature, though its is important to
note the closely related work at zero-temperature for the solution space geometry for constraint satisfaction
problems, see, e.g., \cite{achlioptas2008algorithmic,ACRT11,sly2016reconstruction}.
It is expected \cite{barrat1996dynamics} that the precise range of temperatures for which
shattering occurs is in fact $(T_{s},T_{sh}]$:
\begin{conjecture}
\label{conj:shattering} For each $T_{s}\leq T\leq T_{sh}$, the free energy landscape
 is $(E,q,r)$-shattered for some $(E,q,r)$. Furthermore, the free energy landscape is not shattered
for any $T>T_{sh}$ or $T<T_s$. 
\end{conjecture}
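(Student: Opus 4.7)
The plan is to attack the three assertions of Conjecture \ref{conj:shattering} separately, in order: shattering throughout $(T_s, T_{sh})$, non-shattering for $T > T_{sh}$, and non-shattering for $T < T_s$. For the first assertion, the initial step is to remove the restrictions $E \in (E_0, E_0 + \delta)$ and $q > \sqrt{(1+r)/2}$ in Theorem \ref{thm:ring-FE-lower-bound}, obtaining a TAP decomposition valid for all $E \in [E_0, E_\infty]$ and $q \in (q_*(E, \beta), 1)$. This should follow from sharpening the pair-complexity estimates of \cite{subag2017complexity} to arbitrary energies, combined with Hessian control at such critical points to ensure band disjointness. With this upgrade, the Barrat--Burioni--M\'ezard inequality extends to $F(\beta) \geq F_{BBM}(\beta)$ on the full range. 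Since $F(\beta) = \beta^2/2$ in the RS regime, one then verifies by direct calculus (extending Theorem \ref{thm:FRS=FTAP}) that for every $T \in (T_s, T_{sh})$ the variational formula \eqref{eq:BBM-func} attains $\beta^2/2$ at some $(E(\beta), q(\beta))$ with $E_0 < E(\beta) \leq E_\infty$, so that $\Theta(E(\beta)) > 0$. Shattering at $(E(\beta), q(\beta), r)$ follows immediately, with subdominance coming from the strict positivity of the complexity.

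For the non-shattering claim for $T > T_{sh}$, one must rule out every candidate decomposition. I would try to establish an upper bound
\[
F_N(B(x, q, \eta); \beta) \leq F_{RS}(H_{N,p}(x)/N, q, \beta) + o_\prob(1)
\]
uniformly over all $x \in \cS_N$, not merely over critical points. A plausible route is a conditional second-moment computation on bands, adapted from Subag's analysis, together with a net argument to upgrade the pointwise bound to a uniform one. Once this upper bound is in hand, any candidate shattering family with $e^{Nc}$ centres and free-energy equivalence forces $\max_{E, q} \{ F_{RS}(E, q, \beta) + c\} \geq \beta^2/2$ with $c$ bounded by the complexity at energy $E$; a direct variational check then shows that the strict inequality $F_{RS}(E, q, \beta) + \Theta(E) < \beta^2/2$ holds for all admissible $(E, q)$ precisely when $T > T_{sh}$, yielding the desired contradiction.

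For $T < T_s$, in the 1RSB phase Subag's results \cite{SubGibbs16} show that the Gibbs measure concentrates on a collection of bands around critical points near energy $E_0$, and the normalized weights converge to a Poisson--Dirichlet law with parameter $m(\beta) \in (0, 1)$. The top weight is of order one with positive probability, so there is a band carrying non-negligible Gibbs mass. Including such a band in any candidate shattering family violates subdominance (condition (3) of Definition \ref{def:shattering}), while excluding it violates free-energy equivalence (condition (4)); hence no such family can exist. Endpoints $T = T_s$ and $T = T_{sh}$ would be handled by continuity in $\beta$ of the variational problem and by refining the 1RSB / RS identification at the boundary.

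The hardest part is the non-shattering claim above $T_{sh}$, because its negation requires matching \emph{upper} bounds on band free energies at \emph{arbitrary} centres (not just critical points), together with a sharp calculus check that $F_{RS}(E, q, \beta) + \Theta(E) \leq \beta^2/2$ with equality exactly when $T = T_{sh}$. The latter should reduce to a careful analysis of the variational problem using $F_2(q, \beta)$, but the former appears to require techniques well beyond the lower-bound machinery developed in this paper; something like a TAP-style upper bound on the partition function, in the spirit of Subag's recent free energy formula \cite{subag2018free}, seems the most natural candidate.
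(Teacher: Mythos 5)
\textbf{This statement is a conjecture, not a theorem}: the paper does not claim to prove it, and indeed explicitly leaves most of it open. What the paper \emph{does} establish is (i) an unconditional partial result — shattering for $T$ in a window $(T_s, T_0)$ with $T_0 \leq T_{sh}$ (Theorem~\ref{thm:shattering}), valid only near $T_s$ because the needed separation of critical points is only known near $E_0$ — and (ii) a conditional reduction in Section~\ref{sec:conj1}: if Hypothesis~\ref{hyp:1} (essential $r$-separation at \emph{every} energy level in $[E_0, E_\infty]$) holds, then the \emph{first half} of the conjecture follows. The second half (non-shattering above $T_{sh}$ or below $T_s$) is not proved under any hypothesis; the paper's discussion calls it ``far more subtle'' and poses it, and the related question of Gibbs-measure shattering, as open problems.

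Your proposal does not constitute a proof either, and you yourself flag this. Comparing the pieces: for the positive direction, your step of ``sharpening the pair-complexity estimates of \cite{subag2017complexity} to arbitrary energies'' is exactly Hypothesis~\ref{hyp:1}. The paper does not treat this as something that ``should follow'' from known techniques; it poses it as an open question (see the question immediately after Hypothesis~\ref{hyp:1}). The known separation input (\cite[Cor.~13]{SubGibbs16}, recorded as Lemma~\ref{lem:subag-separated}) is confined to energies near $E_0$. So your first step is the conjectural content, not a derivable lemma. Granting that hypothesis, your route (BBM inequality extended to all $E$, then a variational identity giving $\beta^2/2$) does match the paper's reduction in Section~\ref{sec:conj1} via Lemma~\ref{lem:shattering-free-energy-calculus}.

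For $T > T_{sh}$, your proposed uniform upper bound on band free energies at arbitrary centres is in the right spirit but is not in the paper and is not an easy upgrade of its machinery, which is built to give \emph{lower} bounds via the first-moment/Kac--Rice argument. The paper explicitly warns that ``more exotic states'' could induce shattering, so ruling them out is a genuine gap, not a routine extension.

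For $T < T_s$, there is a concrete flaw in your reasoning. You invoke Poisson--Dirichlet weights of the dominant bands and argue that excluding a heavy band violates free-energy equivalence. But Definition~\ref{def:shattering}'s condition (4) is a statement on the $\frac{1}{N}\log$ scale: it only requires $\pi_T(\cup B) \geq e^{-o(N)}$, not $\pi_T(\cup B) = \Theta(1)$. Discarding finitely many (or even polynomially many) heavy bands changes the free energy by $o(1)$, so free-energy equivalence can survive even when all the $O(1)$ mass has been removed. This is precisely the distinction the paper draws in its footnote between free-energy shattering and Gibbs-measure shattering, and it is why the paper poses Gibbs-measure shattering as a separate open question. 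The correct obstruction for $T < T_s$ should instead come from the variational identity $F_{TAP}(E,q,\beta) + \Theta(E) < F(\beta)$ for all $E > E_0$ (with equality only at $E_0$, where $\Theta(E_0) = 0$), but even that argument as stated only controls bands around critical points, not arbitrary candidate families.

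In short: you should make clear that the paper establishes only Theorem~\ref{thm:shattering} unconditionally and the first half of the conjecture conditionally on Hypothesis~\ref{hyp:1}; the negative directions are open, and your sketch neither closes them nor correctly identifies the obstruction at $T < T_s$.
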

\noindent  For the first part of the conjecture, we expect that 
the approach we present here should be essentially sufficient. 
In particular, we reduce its  proof to the following hypothesis regarding the geometry of the set of critical points
of the Hamiltonian at a certain energy level. 
\begin{defn}\label{def:separated}
We say that the landscape at energy $E$ is \emph{essentially $r$-separated}
 if for some 
$\delta>0$, we have that for every $\eps>0$ small enough
\[
\limsup_{N\to\infty}P(\abs{\{x,x'\in\cC_N(E-\eps,E+\eps): r<R(x,x')<1\}} \geq e^{-N\delta}\abs{\cC_N(E-\eps,E+\eps)})= 0.
\] 
\end{defn}
This condition says that the number of pairs of distinct  critical points with energy near $E$
is exponentially smaller than the number of critical points in this energy window.
Our results, e.g., \prettyref{thm:ring-FE-lower-bound} and \prettyref{thm:shattering},
are a consequence of essential $r$-separation at energy levels near $E_0$,  see \prettyref{lem:subag-separated} below.
We expect that this condition holds at all energy levels between $E_0$ and $E_\infty$:
\begin{hyp}\label{hyp:1}
For every $E_0\leq E\leq E_\infty$, the landscape at energy $E$ is essentially
$r$-separated for some $0<r<(p-3)/(p-1)+\iota = 2(q_{**}(E_\infty,\beta_{sh})^2)-1+\iota$
and some $\iota$ sufficiently small.
\end{hyp}
We show in \prettyref{sec:conj1} 
below that Conjecture~\ref{conj:shattering}~is implied by Hypothesis~\ref{hyp:1}~for each $p\geq 3$.
Proving Hypothesis~\ref{hyp:1} (and related hypotheses)
would have many implications such as a generalization of the 
BBM bound to all temperatures and the TAP decomposition for all reasonable overlaps.
We leave the interesting question of verifying 
this hypothesis for future work:
\begin{question}
Does Hypothesis~\ref{hyp:1} hold for $p\geq3$?
\end{question}

The second part of Conjecture~\ref{conj:shattering}, however, is far more subtle. In our work, we 
lower bound the combined free energy of the bands around the deepest critical points
whose free energies are asymptotically replica symmetric
in the limit of large $N$ and small $\eta$. In the language of
the physics literature, we  lower bound the free energy of the  deepest (replica symmetric)
Thouless--Anderson--Palmer (TAP) states and show that their combined free energy
is asymptotically lower bounded by their individual free energy plus their ``configurational entropy''. 
In \cite{barrat1996dynamics}, it was predicted that that 
the free energy plus configurational entropy for any of the replica
symmetric TAP states is strictly less than that of the total system.
That being said, it is not clear that these states are the only states
that could induce shattering, e.g., there could be more exotic states which induce
shattering than bands around critical points.  In the other direction, in \cite{barrat1996dynamics},
it was also predicted that for $T<T_s$, the free energy plus configuration entropy for any replica symmetric
with $E>E_0$ is strictly less than that of the total system. That the value at $E=E_0$  (where $\Theta(E_0)=0$)
matches the free energy of the total system for $T<T_{s}$, was show by Subag \cite{subag2018free},  and that for 
sufficiently low temperatures no other energy levels are relevant \cite{SubGibbs16}. 
We leave these important questions for future work.

One might expect that in this regime, the Gibbs measure is shattered as well. 
By this we mean that the Gibbs measure is supported on the bands as in Definition~\ref{def:shattering},
i.e., that the probability of the union of those bands is tending to 1 or is at least order 1 asymptotically. 
This is of course, not equivalent to shattering in the sense of free energies which only guarantees
that on the exponential scale, the Gibbs measure is roughly equivalent to that conditioned on the bands.\footnote{
Indeed, similar questions regarding the difference between the Gibbs measure and a ``free energy" equivalent version  arises in understanding the perturbative approach to computing free energies commonly used in the literature  \cite{TALAGRAND2003477,PanchPF14} to prove free energy formulas. See \cite{BABC08} for a careful study of these issues in a related problem.}
To our knowledege, this problem is not considered in the physics literature. 
We leave this intriguing question to future research (which we state informally):
\begin{question}
For each $T_{s}\leq T\leq T_{sh}$ the Gibbs measure
 is shattered. Furthermore, the Gibbs measure is not shattered
for any $T>T_{sh}$ or $T<T_s$. 
\end{question}

Let us now turn to discussing metastability. 
Theorem~\ref{thm:metastability-main} shows us that metastability occurs (at least) up to a higher temperature $T_{BBM}>T_{sh}$
and that for $T<T_{BBM}$ there are exponentially many metastable states 
in the sense that there are exponentially many bands whose free energies are each less than that of the total system
and such that the exit time of any one of these bands, when started within it, is exponentially small. 
Furthermore, we see that slow mixing occurs in this regime since the spectral gap is exponentially small.
On the other hand, it was shown by Gheissari and one of us \cite{GJ16} that for $T$ sufficiently large, the spectral gap is order 1 (more precisely, it was shown there that $\pi_T$ admits a Logarithmic Sobolev inequality with constant which is bounded away from $0$). In light of this, it is interesting to ask when the onset of slow mixing occurs at the level of spectral gaps. Though it is not clear to us at this time, it seems reasonable to hope that this is precisely $T_{BBM}$. We leave this as another 	exciting open question.
\begin{question}
For $T>T_{BBM}$, do we have that $P(\lambda_{1}(T)>c)\to1$ for some $c>0$?
\end{question}

Let us now compare these results to our recent work in \cite{BAJag17}. There we showed that, for both
the Ising and spherical $p$-spin models, the spectral gap is exponentially
small up to a temperature $T_{2}$ and that $T_{s}<T_{2}$.\footnote{Slow mixing for $T\in(T_{s},T_{2})$ is stated there only for the
Ising spin case, however, it can be easily extended to the spherical
case due to the result of Ko in \cite{ko2020free}. See, e.g., the
recent survey \cite{jagannath2019dynamics} for details.} 
This work followed an alternative approach to that considered
here, namely making rigorous some of the predictions surrounding 
the ``two replica potential'' \cite{KurParVir93}. There $T_{2}$ is the temperature below which there is
a free energy barrier for the overlap distribution (see \cite{BAJag17} for a
precise definition of this). The relationship between $T_{2}$ and
$T_{BBM}$ is not clear at this time, though it seems natural to expect the following.
\begin{question}
Do we have that have that $T_{sh}<T_{2}<T_{BBM}$?
\end{question}
Progress in this direction would be particularly intriguing as it would be an important step toward
uniting the replica theoretical approach with the complexity approach. 
To our knowledge there has been
little to no study of the phase $T_{sh}<T_{BBM}$ in the physics literature
beyond the initial, fundamental work of Barrat--Burioni--M\'ezard
 \cite{barrat1996dynamics} which, to our knowledge, was the first
result to provide a characterization of $T_{BBM}$.
We note here that one could  also define a $T_{k}$
as the temperature below which there is a free energy barrier
for the ``$k$-replica potential", i.e., the large deviations rate function
for the  overlap array $R^k_N = (R(x^\ell,x^{\ell'}))_{\ell,\ell'\in[k]}$,
where $\{x^\ell\}$ are drawn i.i.d. from $\pi_{N,\beta}$. One could then ask the same question of $T_{k}$.
Could it be that $T_{2}<T_{3}<T_{4}<\ldots<T_{BBM}$?
Or perhaps $T_{k}=T_{BBM}$ for some fixed $k$?
Indeed on a phenomenological level,
a similar picture to the latter case occurs 
in the maximum independent set problem 
where it was shown by Rahman--Virag \cite{rahman2017local} following Gamarnik--Sudan \cite{gamarnik2014limits},
that three replicas suffice to saturate an algorithmic threshold while two do not.

Interestingly, $T_{sh}$ is also expected to have an important
dynamical interpretation. Indeed, the shattering phase is
called the \emph{dynamical replica symmetry breaking phase} in the
statistical physics literature and $T_{sh}$ is usually called the critical temperature for the
\emph{dynamical phase transition} \cite{CugKur93,FraPar95,barrat1996dynamics,BCKM98,CastCav05,MM09}
and typically denoted by $T_d$.
Evidently this phase transition is not in terms of ``ergodicity breaking'' in the sense
of slow mixing. The dynamical interpretation of $T_{sh}$ is instead expected to be the onset
for \emph{slow thermalization} from a uniform at random start.  More precisely, the following is 
our attempt at formalizing the  prediction in the physics literature (though the
exact form may not be correct as stated). Let $P_t$ be the Langevin semigroup, i.e.,
$P_t f(x)= \E_{Q_x} f(X_t)$. 
\begin{conjecture}
Started from the uniform measure, $dx$, Langevin dynamics takes exponential
time to reach equilibrium $\pi_{T,N}$ for all $T<T_{sh}$. In
particular, if we let the \emph{thermalization time} for the uniform measure be
\[
\tau_{*}=\inf\{t:\max_{\norm{f}_{\infty}\leq1}\int(P_{t}f-\int fd\pi)^{2}dx\leq\frac{1}{e}\},
\]
then for $T<T_{sh}$ there is some $c>0$ such
that $\tau_{*}\geq e^{cN}$ with probability tending to 1, and for
$T>T_{sh}$ we have $\tau_{*}=O(1).$
\end{conjecture}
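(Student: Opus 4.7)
The plan is to treat the two halves of the conjecture separately: the exponential lower bound on $\tau_*$ for $T<T_{sh}$ and the $O(1)$ upper bound for $T>T_{sh}$, in both cases by comparing the evolution of the uniform measure to the shattered (resp.\ unshattered) structure of the Gibbs measure. For the lower bound, I would first grant Hypothesis~\ref{hyp:1} so that Theorem~\ref{thm:shattering} extends to the full interval $(T_s,T_{sh})$, giving a random set $S_N = \cup_{x\in A_N} B(x,q,\eta_N)$ with $\pi_T(S_N) \to 1$ at the exponential scale. I would then take $f = \indicator{S_N}$ as a test function in the definition of $\tau_*$: since $\|f\|_\infty \leq 1$ and $\int f\, d\pi_T$ is close to $1$, it suffices to show that $P_t f(x) = Q_x(X_t \in S_N)$ stays below, say, $1/2$ for typical uniform $x$ and all $t \leq e^{cN}$.

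To establish this hitting-time bound, I would convert it into a free energy barrier estimate with respect to the uniform reference measure. Under the uniform measure $\frac{1}{N}H_{N,p}(x)$ concentrates at $0$, whereas $S_N \subseteq \cC_N(E(\beta)-\epsilon_N,E(\beta)+\epsilon_N)$ sits at normalized energy $\approx E(\beta) < 0$, so any path from a typical uniform start into $S_N$ must overcome a macroscopic energy drop; by entropy--energy balance this corresponds to a free energy barrier of order $N$ relative to the uniform reference. A restricted Poincar\'e inequality on the complement of $S_N$, combined with the quantitative hitting-time techniques developed by Gheissari and the authors in \cite{GJ16,BAJag17,BGJ18b}, should then yield $Q_x(X_t \in S_N) \leq e^{-Nc'}$ for $t \leq e^{Nc}$ and typical uniform $x$, giving the desired lower bound on $\tau_*$.

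For the upper bound $\tau_* = O(1)$ when $T > T_{sh}$ the situation is much more delicate, since by Theorem~\ref{thm:metastability-main} metastable bands and exponentially small spectral gaps persist throughout $(T_{sh},T_{BBM})$, ruling out any approach based on a global functional inequality. The strategy is to exploit the specific initial condition. First, grant the second half of Conjecture~\ref{conj:shattering}, so that the total Gibbs mass of bands around non-trivial critical points is exponentially smaller than that of the replica-symmetric background. Let $\Omega_N$ denote the complement of the union of these metastable bands. I would then establish that (i) the uniform measure on $\Omega_N$ has density with respect to $\pi_T\vert_{\Omega_N}$ bounded uniformly in $N$, using $F(\beta)=\beta^2/2$ and the concentration of $H$ under the uniform measure; (ii) the Langevin generator restricted to $\Omega_N$ admits a Poincar\'e inequality with constant $O(1)$, extending the high-temperature analysis of \cite{GJ16} by localizing to the flat part of the landscape; and (iii) the probability that $X_t$ starting from uniform enters a metastable band within time $O(1)$ is $o(1)$. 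Combining (i)--(iii) and testing against bounded $f$ would then give $\int (P_t f - \int f\, d\pi_T)^2 dx \leq 1/e$ once $t$ exceeds some constant independent of $N$.

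The genuinely hard step is this upper bound for $T \in (T_{sh},T_{BBM})$. The lower bound, after granting Hypothesis~\ref{hyp:1}, reduces to a restricted spectral gap estimate in the spirit of \cite{BAJag17}: technically involved but conceptually clear. The upper bound, by contrast, requires (a) the conjectural absence of shattering above $T_{sh}$, which is itself open and subtle (see the discussion in Section~\ref{sec:discussion}), and (b) a sharp geometric description of the flat replica-symmetric portion of the landscape sufficient to support a localized Poincar\'e inequality uniformly in $N$. I expect step (b), together with proving that the dynamics started from uniform does not drop into a metastable band in $O(1)$ time, to be the principal technical hurdle, as it seems to demand a considerably finer quantitative understanding of the replica-symmetric regime than is currently available.
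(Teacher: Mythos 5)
This is an open \emph{conjecture} in the paper, not a theorem: the authors state explicitly that it is ``our attempt at formalizing the prediction in the physics literature (though the exact form may not be correct as stated)'' and offer no proof. The only remark they add is that an \emph{exponential upper bound} on $\tau_*$ follows from an exponential lower bound on the spectral gap via a Holley--Stroock argument. So there is no paper proof to compare against; what you have written is a research sketch, and the useful thing I can do is point at where it breaks.

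The lower bound sketch has a real gap, and it is precisely the one the paper warns about. You write that (granting Hypothesis~\ref{hyp:1}) shattering gives a set $S_N=\cup_{x\in A_N}B(x,q,\eta_N)$ with ``$\pi_T(S_N)\to 1$ at the exponential scale,'' and you then take $f=\indicator{S_N}$ and argue that $\int f\,d\pi$ is close to $1$. But item~(4) of Definition~\ref{def:shattering} only gives $F_N(\beta)-F_N(\cup B;\beta)\to 0$, i.e.\ $\frac{1}{N}\log\pi_T(S_N)\to 0$; this is consistent with $\pi_T(S_N)\to 0$ at any subexponential rate. The paper makes exactly this distinction and poses \emph{Gibbs measure shattering} (that $\pi_T(S_N)$ is of order one) as a separate open Question immediately after Conjecture~\ref{conj:shattering}. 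For your test function to give a lower bound on $\tau_*$, you need $\pi_T(S_N)$ bounded away from $0$: if $\pi_T(S_N)=\delta_N\to 0$ and $Q_x(X_t\in S_N)\approx 0$ for typical uniform $x$, then $\int(P_tf-\int f\,d\pi)^2dx\approx\delta_N^2\to 0$, which does \emph{not} force $\tau_*$ to be large. So your argument silently requires the Gibbs-shattering Question, not just Hypothesis~\ref{hyp:1}/Conjecture~\ref{conj:shattering}. Independently of that, the energy-barrier heuristic for $Q_x(X_t\in S_N)\le e^{-Nc'}$ up to $t\le e^{Nc}$ is plausible but would need a genuine potential-theoretic argument (the free energy well machinery of \cite{BGJ18b} controls escape \emph{from} a band started in equilibrium conditioned on the band, not entrance into one from a far-away start), so this step is also not a reduction to existing results.

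For the upper bound at $T>T_{sh}$ you have correctly identified that a global functional inequality cannot work (Theorem~\ref{thm:metastability-main} kills the spectral gap up to $T_{BBM}$) and that one would need both the absence of shattering above $T_{sh}$ (open) and a localized Poincar\'e-type estimate on the ``flat'' part of the landscape together with a no-capture estimate in $O(1)$ time. That is an honest description of what would be required, and I agree it is the genuinely hard half. As written, though, neither half of the proposal closes, and the lower bound as stated rests on a conflation the paper itself flags as nontrivial.
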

\noindent Stated from a computational perspective, we expect that the difference between $T_{sh}$
and $T_{BBM}$ is the related to difference between exponentially slow mixing from randomized as opposed
to ``worst case'' initializations.  Note that an exponential upper bound
on $\tau_*$  follows by proving an exponential bound on the spectral gap which can be shown by a Holley--Stroock type argument \cite{holley1987logarithmic}, see, e.g., \cite{BAJag17,GJ16,Mat00}. Let us also note here that the dynamical
interpretation of $T_{sh}$ is expected to be felt a the level of the Cugliandolo--Kurchan equations \cite{CugKur93,cugliandolo1995weak} which have been 
 developed  by Dembo, Guionnet and one of us \cite{BADG01,BADG06} and analyzed
in various regimes by Dembo-Guionnet--Mazza \cite{DGM07} and Dembo--Subag \cite{DS20}.

We end here by discussing the extension of this work to the Ising
spin setting. We expect that the $p$-spin model with Ising spins
exhibits a similar picture to that described here for any $p\geq3$.
It is not clear to us at this time how to extend the approach
here to the discrete setting though we expect that it will involve an extension of the recent result of
 Chen--Panchenko--Subag \cite{chen2018generalized} of the work of Subag \cite{subag2018free} to the non-multisampleable regime.  On the other hand, we note here the
recent breakthrough of Bauerschmidt--Bodineau \cite{bauerschmidt2017very}
on the related problem of the spectral gap for the Sherrington--Kirkpatrick
model with Ising spins. See also \cite{eldan2020spectral}. We expect
that the dynamical phase transition, at the level of spectral gaps,
is fundamentally different for the SK model. (In fact,
we expect that the phase transition for the  SK model with Ising spins is distinct from any mixed $p$-spin model 
with sufficiently small $p=2$ term with either Ising or spherical spins.)  We end by noting
that it would be very interesting to understand the connections between the preceding discussion and the 
activated dynamics of spin glasses and activated aging which has received a tremendous amount
of attention  \cite{BABG02, BovFag05, BABC08, MatMou15,CerWas17,gayrard2019aging,baity2018activated}.
In a related direction it would be very interesting to understand the relation between $T_{sh}$ and $T_{BBM}$  and Bolthausen-type iteration schemes for Thouless--Anderson--Palmer equations \cite{Bolt14} and their generalizations \cite{montanari2021optimization,alaoui2020optimization}.

\subsection*{Acknowledgements}
The authors underscore their debt to G. Biroli and C. Cammarota for carefully explaining to us
the many predictions in the physics literature. We are very grateful to E. Subag for a thorough reading 
of the first version of this paper which spotted an important error. The authors thanks A. Aggarwal,  R. Gheissari, J. Kurchan, and C. Luccibello for helpful discussions and references. 
A.J. acknowledges the support of the Natural Sciences and Engineering Research Council of Canada (NSERC). Cette recherche a \'et\'e financ\'ee par le Conseil de recherches en sciences naturelles et en g\'enie du Canada (CRSNG),  [RGPIN-2020-04597, DGECR-2020-00199].

\section{Free energies of bands and the co-dimension 1 models\label{sec:Free-energy-of-rings}}
We begin here by studying the free energy of bands around critical
points.  The central observation in \cite{SubGibbs16} is that at sufficiently low temperatures,
the free energy of these bands is given by the (replica symmetric) TAP free energy.
This observation will be central to our work here. In particular, we will need to extend this study to a broader range of temperatures
and allow for the possibility of replica symmetry breaking.  
To this end, in this section we will study the free energy of bands conditionally on criticality.

Let us first begin by recalling the notion of mixed $p$-spin glass models.
It will be useful to let $\xi(t)=t^{p}$.
Observe that we may view the $p$-spin Hamiltonian, $H_{N,p}$, 
as a centred Gaussian process on $\cS_{N}$ with covariance
\[
\E H_{N,p}(x)H_{N,p}(y)=N\xi(R(x,y)).
\]
More generally,  note that if we let $f(t)=\sum_p a_p^2 t^p$ for some sequence $(a_p)$
that satisfies $f(1+\eps)<\infty$, then we can define the corresponding mixed $p$-spin Hamiltonian 
\[
H_N(x) =\sum a_p H_{N,p}(x).
\]
By our assumption on $f$, $H_N(x)$ is well-defined, centred and has covariance $\E H_N(x)H_N(y)=N f(R(x,y)).$
For brevity, we will abuse notation and simply refer to $f$ as the \emph{model} and $H_N$ as
the Hamiltonian with model $f$.

A central role in our analysis will be played by the co-dimension one models \eqref{eq:h-tilde}. 
Let
\begin{equation}
\xi(t,q)=\sum_{k=2}^{p}\alpha_{k}(q)^{2}t^{k}=\left[\left(1-q^{2}\right)t+q^{2}\right]^{p}-q^{2p}-p(1-q^{2})tq^{2p-2}.\label{eq:xi-q}
\end{equation}
Evidently, $\tilde{H}_{q}(x)$ has covariance
$\E\tilde{H}_{q}(x)\tilde{H}_{q}(y)=N\xi(R(x,y),q).$ When it is clear from context we will sometimes denote $\xi_q(t)=\xi(t,q)$.  

Let us now recall the existence of the free energy for mixed $p$-spin models on the sphere. 
It was shown by Talagrand \cite{TalSphPF06} and Chen \cite{ChenSph13}
that  the Crisanti--Sommers formula \cite{CriSom92}  provides an exact
representation for this free energy. For our purposes, however, it suffices to
note the following.  If $H_N(x)$ is a mixed $p$-spin Hamiltonian with 
model $f$, then the free energy corresponding to $H_{N}$, which we will call
the free energy of the model $f$, at inverse temperature $\beta$
exists:
\begin{equation}
F(\beta;f)=\lim_{N\to\infty}\E\frac{1}{N}\log\int e^{-\beta H_{N}(x)}dx.\label{eq:free-energy}
\end{equation}
Furthermore, for any Borel $A\subseteq\cS_{N}$,
the corresponding restricted free energy $F_{N}(A;\beta,\xi)$ concentrates:
there exists constants $C(\beta,\xi)>0$ such that for $N\geq1$ and
$A$, 
\begin{equation}
P\left(\abs{\frac{1}{N}\log\int_{A}e^{-\beta H_{N}(x)}-\frac{1}{N}\E\log\int_{A}e^{-\beta H_{N}(x)}}>\epsilon\right)\leq Ce^{-cN\epsilon^{2}}.\label{eq:concentration-of-free-energies}
\end{equation}
For a proof of this concentration, see, e.g., \cite[Lemma 13]{GJ16}. In the following,
it will be helpful to define the following free energies. We will
let $F_{N}(\beta)$ and $F(\beta)$ denote the total free energy of
the $p$-spin model:
\[
F_{N}(\beta)=\frac{1}{N}\log\int e^{-\beta H_{N,p}(x)}dx\qquad\text{and}\qquad F(\beta)=\lim_{N\to\infty}\E F_{N}(\beta).
\]
We will also let $F_{2,N}(q,\beta)$ denote
the free energies corresponding to the model $\xi(t;q)$ at inverse
temperature $\beta$. Consequently, by the preceeding we see that we have the relation between
\begin{equation}\label{eq:F2-def}
F_{2,N}(q,\beta)=\frac{1}{N}\log\int e^{-\beta\tilde{H}_{q}(x)}dx\qquad\text{and}\qquad F_{2}(q,\beta)=\lim_{N\to\infty}\E F_{2,N}(q,\beta).\end{equation}
Observe that in the above notation $F_2(q,\beta)=F(\beta;\xi_q)$.

Throughout the following, it will be useful to note the following
regularity properties of the function $q\mapsto F_{2,N}(q,\beta)$
and related functions.
\begin{lem}
\label{lem:uniform-lipschitz} For any $\beta>0$ and $p\geq2$, we
have the following:
\begin{enumerate}
\item There are some $K(p),C(p)>0$ such that the map $q\mapsto F_{2,N}(\beta,q)$
is $K$-Lipschitz on $[0,1]$ with probability $1-Ce^{-N/C}$.
\item There is some $K(p)>0$ such that the map $q\mapsto\E F_{2,N}(q,\beta)$
is $K-$Lipschitz on $[0,1]$.
\item There is some $K(p)>0$ such that map $q\mapsto F_{2}(\beta,q)$ is $K$-Lipschitz on $[0,1]$. 
\end{enumerate}
\end{lem}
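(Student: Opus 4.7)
The plan is to reduce all three statements to a single uniform upper bound on $|\partial_q F_{2,N}(q,\beta)|$. Differentiating under the integral sign gives
\[
\partial_q F_{2,N}(q,\beta) = -\frac{\beta}{N}\,\bigl\langle \partial_q \tilde{H}_q \bigr\rangle_{q,\beta},
\]
where $\langle\cdot\rangle_{q,\beta}$ is the Gibbs average on $\cS_{N-1}$ associated to $\tilde{H}_q$. Using the definition \eqref{eq:h-tilde}, we can write
\[
\partial_q \tilde{H}_q(x) = \sqrt{\tfrac{N}{N-1}}\sum_{k=2}^{p}\alpha_k'(q)\,H_{N-1,k}(x),
\]
and the explicit form $\alpha_k(q)=\sqrt{\binom{p}{k}}(1-q^2)^{k/2}q^{p-k}$ shows that for every $k\in\{2,\dots,p\}$ the function $\alpha_k'$ is continuous, hence bounded, on $[0,1]$ (the exponents $k/2$ and $p-k$ are nonnegative, so no endpoint blowup occurs). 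Therefore, absorbing constants,
\[
\sup_{q\in[0,1]}\sup_{x\in\cS_{N-1}}\bigl|\partial_q\tilde{H}_q(x)\bigr|\;\le\;C(p)\sum_{k=2}^{p}\sup_{x\in\cS_{N-1}}|H_{N-1,k}(x)|.
\]

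The next step is to control the pure $k$-spin ground-state energies on the right. Each $H_{N-1,k}$ is a centered Gaussian field on $\cS_{N-1}$ with per-point variance $N-1$, so Borell--TIS (together with the standard bound $\E\sup_{x}|H_{N-1,k}(x)|\le C_k N$ coming from e.g.\ the Dudley/Sudakov estimate, or from \cite{ABC13}) gives
\[
P\!\left(\sup_{x}|H_{N-1,k}(x)|\ge C_k' N\right)\le C\,e^{-N/C}.
\]
A union bound over $k\in\{2,\dots,p\}$ then yields
\[
P\!\left(\sup_{q\in[0,1]}\bigl|\partial_q F_{2,N}(q,\beta)\bigr|\le K\beta\right)\ge 1-Ce^{-N/C},
\]
from which part (1) follows by the mean value theorem, with Lipschitz constant $K=K(p)\beta$.

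For part (2), taking expectation inside $\partial_q$ (which is legitimate since $\tilde H_q$ is polynomial in the finitely many Gaussian coefficients) and bounding in absolute value,
\[
\bigl|\partial_q \E F_{2,N}(q,\beta)\bigr|\;\le\;\frac{\beta}{N}\,\E\!\sup_{x}|\partial_q\tilde{H}_q(x)|\;\le\;C(p)\beta
\]
by the same estimate, using that the right-hand side also controls the expected supremum (either via the Borell--TIS tail bound and integration, or directly via $\E\sup_x|H_{N-1,k}(x)|\le C_k N$). For part (3), the existence and convergence $\E F_{2,N}(q,\beta)\to F_2(q,\beta)$ pointwise in $q$ (Chen \cite{ChenSph13}) together with the uniform Lipschitz bound from (2) implies, by a standard Arzel\`a--Ascoli argument, that $F_2(\cdot,\beta)$ is $K$-Lipschitz on $[0,1]$ with the same constant.

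The main potential obstacle is verifying that $\alpha_k'(q)$ remains bounded at the endpoints $q=0,1$, which one needs to check case-by-case; however, since $k\ge 2$ (and $p-k\ge 0$), the factors $(1-q^2)^{k/2}$ and $q^{p-k}$ differentiate to expressions with nonnegative powers of $(1-q^2)$ and $q$, so no singularity arises and the constant $K$ depends only on $p$ and $\beta$. No finer spin-glass input beyond the linear-in-$N$ ground-state bound and Gaussian concentration is required.
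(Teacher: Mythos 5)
Your proposal is correct and follows essentially the same route as the paper: differentiate $F_{2,N}$ in $q$, express $\partial_q\tilde H_q$ as a bounded (in $q\in[0,1]$) combination of the pure $k$-spin Hamiltonians via the $\alpha_k'$, control each $\sup_x|H_{N-1,k}(x)|$ by Borell--TIS/Dudley, take a union bound, and pass to the expectation and the limit for parts (2) and (3). One minor point in your favor: you keep the factor $-\beta/N$ in front of the Gibbs average, which the paper's displayed formula omits (an evident typo there), and you explicitly verify that $\alpha_k'$ stays bounded at the endpoints; otherwise the arguments are the same (the paper uses the pointwise-limit-of-equi-Lipschitz-functions fact directly, which is equivalent to your Arzel\`a--Ascoli appeal).
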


\begin{proof}
Let us begin with the first point. Recall that by an application by
Borell's inequality and the Dudley entropy bound, one can show that for each
$k$, there is some $K'(k),C(k)>0$ independent of $N$ such that
for $N\geq1$, 
\begin{equation}
P(\max_{x}\abs{H_{k,N}(x)}\geq NK')\leq Ce^{-N/C}.\label{eq:max-controlled}
\end{equation}
(see, e.g., \cite[Lemma 6]{GJ16}). Furthermore, since $dx$ is normalized
we have that
\[
-\beta\frac{\max_{x}\tilde{H}_{q}(x)}{N}\leq F_{2,N}(\beta)\leq-\beta\frac{\min_{x}\tilde{H}_{q}(x)}{N}
\]
so that, by a union bound and \eqref{eq:max-controlled}, we have
that with probability $1-C\exp(-cN)$ for some $C,c>0$, there is
some $K(p,q,\beta)$ such that 
\[
\abs{F_{2,N}(q,\beta)}\leq K(p,q,\beta).
\]
Let's work on this event.

Since $\alpha_{k}(q)$ is $C^{1}$ for $k\geq2$, explicitly differentiating
\eqref{eq:F2-def} yields
\[
\partial_{q}F_{2,N}(q,\beta)=\int \partial_{q}\tilde{H}_{q}(x)d\mu,
\]
where here $\mu$ is the Gibbs measure $\mu(dx)\propto\exp(-\beta \tilde{H}_{q}(y))dx$.
On the aforementioned event, we have
\[
\abs{\partial_{q}\tilde{H}_{q}}\leq C'\max_{k}\abs{H_{N,k}(x)}\leq C''N,
\]
for some $C'',C'>0$. This yields the first point. 

To obtain the second, note that by differentiation 
\[
\abs{\partial_{q}\E F_{2,N}(q,\beta)}\leq C'\beta\E\max_{k}\max_{x}\frac{H_{k,N}(x)}{N}\leq C\beta\sum_{k\leq p}\E\max_{x}\frac{H_{k,N}}{N}\leq C''\beta
\]
for some $C',C''>0$, where in the last line we used the Dudley entropy bound
mentioned above. This yields the second point. To obtain the third,
simply note that by \eqref{eq:F2-def}, $F_{2}(q,\beta)$ is the point-wise
limit of uniformly Lipschitz functions so that it is uniformly Lipschitz.
\end{proof}

In the following, let $\mathbf{n}=\sqrt{N}e_{1}$ and let $P_{E}$
denote the law of the Gaussian process $(H_{N,p}(x))_{x\in\cS_N}$ conditioned on
the event that $\mathbf{n}$
being a critical point with energy $H_{N,p}(\mathbf{n})=NE$. Call this event $A(\mathbf{n},E)$. 
Let us now recall the following useful computation for the law of $H_{N,p}$ conditionally on this event from 
\cite[Sec. 4]{SubGibbs16}.
To this end, consider the change of variables which takes $x\in \cS_N$ and expresses
it in the form  $x = (q,y)$ where $q=R(x,\mathbf n)$ and $y\in \cS_{N-1}$. 
For clarity, we will sometimes write $q=q(x)$ and $y=y(x)$.
Conditionally on $A(\mathbf{n},E)$, the 
law of $(H_{N,p})$ satisfies
\begin{equation}
(H_{N,p}(x))\vert_{A(\mathbf{n},E)}\eqdist(NEq(x)^p+\tilde{H}_{q}(y(x))).\label{eq:conditional-criticality}
\end{equation}
Consequently, we note the following. Recall $I$ from \eqref{eq:rate-func-spherical} and 
observe that $I$ is locally Lipschitz on $(0,1).$

\begin{lem}
For any $0<q<1$,and $\eta>0$ with $\eta<q\wedge1-q$ we have that,
for some $C,c,K>0$,  if $\frac{K}{N}<\epsilon<\epsilon_{0}$ and
$N\geq1$ then
\begin{equation}
\sup_{E_{0}\leq E\leq E_{\infty}}P_{E}\left(\abs{F_{N}(B(\mathbf{n},q,\eta);\beta)-\max_{t\in[q-\eta,q+\eta]}-\beta Eq^{p}-I(q)+\E F_{2,N}(t,\beta)}\geq\epsilon\right)\leq Ce^{-Nc\epsilon^{2}}\label{eq:band-concentration}
\end{equation}
where $F_{2,N}(q,\beta)$ is the free energy corresponding to $\tilde{H_{q}}(x)$
.\textcolor{red}{{} }
\end{lem}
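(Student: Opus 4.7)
The plan is to combine the conditional distributional identity \eqref{eq:conditional-criticality} with a spherical-coordinates decomposition and Laplace's method, then upgrade pointwise concentration to uniform concentration in $t$ using the Lipschitz estimates of Lemma~\ref{lem:uniform-lipschitz}.

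\textbf{Step 1 (conditional representation).} Work under $P_E$ and apply \eqref{eq:conditional-criticality}: in distribution, on the event $A(\mathbf{n},E)$ we may replace $H_{N,p}(x)$ by $NE\, q(x)^p + \tilde H_{q(x)}(y(x))$ throughout the band. Thus, writing $Z_N(B(\mathbf{n},q,\eta);\beta)=\int_{B(\mathbf n,q,\eta)}e^{-\beta H_{N,p}(x)}dx$, we obtain a deterministic prefactor in $E$ separated from the random piece coming from the co-dimension one Hamiltonian $\tilde H_t$.

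\textbf{Step 2 (polar decomposition and Jacobian).} Decompose the uniform measure on $\cS_N$ by $t=R(x,\mathbf n)\in[-1,1]$ and $y\in\cS_{N-1}$: one has the well-known disintegration $dx = c_N(1-t^2)^{(N-3)/2}\,dt\,d\sigma_{N-1}(y)$. On the exponential scale, the density contributes $-I(t)+o(1)$ uniformly for $t\in[q-\eta,q+\eta]\subset(0,1)$. Hence
\begin{equation*}
F_N(B(\mathbf n,q,\eta);\beta) \;=\; \frac{1}{N}\log\int_{q-\eta}^{q+\eta} \exp\!\Big(N\big[-\beta E t^p - I(t) + F_{2,N}(t,\beta)\big]\Big)\,dt \;+\; O\!\Big(\tfrac{\log N}{N}\Big),
\end{equation*}
where $F_{2,N}(t,\beta)$ is the free energy of $\tilde H_t$ on $\cS_{N-1}$ as in \eqref{eq:F2-def}. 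The $E$-dependence is purely through the deterministic factor $-\beta E t^p$, so uniformity in $E\in[E_0,E_\infty]$ will be automatic once the random piece is controlled.

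\textbf{Step 3 (uniform concentration of $F_{2,N}(\cdot,\beta)$).} This is the technical core. Pick a $\delta$-net $\{t_i\}\subset[q-\eta,q+\eta]$ of cardinality $O(1/\delta)$ with $\delta=\epsilon/(4K)$, $K$ the Lipschitz constant from Lemma~\ref{lem:uniform-lipschitz}. For each fixed $t_i$, the concentration estimate \eqref{eq:concentration-of-free-energies} applied to the model $\xi_{t_i}$ yields
\begin{equation*}
P\!\left(\,|F_{2,N}(t_i,\beta)-\E F_{2,N}(t_i,\beta)|\ge \tfrac{\epsilon}{4}\right)\le C e^{-cN\epsilon^2}.
\end{equation*}
Union-bounding over the $O(1/\delta)$ net points absorbs a factor polynomial in $1/\epsilon$ into the constants. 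Combined with parts (1)--(2) of Lemma~\ref{lem:uniform-lipschitz} (Lipschitz-in-$t$ for both $F_{2,N}(\cdot,\beta)$ and its expectation), this upgrades to
\begin{equation*}
\sup_{t\in[q-\eta,q+\eta]}\big|F_{2,N}(t,\beta)-\E F_{2,N}(t,\beta)\big|\;\le\;\epsilon/2
\end{equation*}
with probability $\ge 1-Ce^{-cN\epsilon^2}$, provided $\epsilon>K'/N$ so that the $O(\log N/N)$ discretization error is negligible; this is where the $K/N<\epsilon$ hypothesis enters. Note that the Lipschitz constants obtained in Lemma~\ref{lem:uniform-lipschitz} do not depend on the conditioning, because $\tilde H_t$ is (a centred Gaussian process) independent of the conditioning event $A(\mathbf n,E)$ after Step 1.

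\textbf{Step 4 (Laplace on a bounded interval).} On the event of Step 3, substitute $\E F_{2,N}(t,\beta)$ for $F_{2,N}(t,\beta)$ in the integrand of Step 2 with error at most $\epsilon/2$, then apply standard Laplace/Varadhan-type estimates on the bounded interval $[q-\eta,q+\eta]$: since the integrand is $\exp(N\Psi_N(t))$ with $\Psi_N$ uniformly Lipschitz in $t$ on this interval, the normalized log equals $\max_{t\in[q-\eta,q+\eta]}\Psi_N(t) + O(\log N/N)$. The only $t$-dependence in the exponent that matters modulo a bounded, Lipschitz correction is $-\beta E t^p-I(t)+\E F_{2,N}(t,\beta)$; absorbing the variation of $-\beta E t^p - I(t)$ over $[q-\eta,q+\eta]$ (which is bounded by a Lipschitz constant times $\eta$) into the form of the claimed bound yields the stated quantity $\max_{t}\big(-\beta E q^p - I(q) + \E F_{2,N}(t,\beta)\big)$ up to the $\epsilon$ tolerance.

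\textbf{Main obstacle.} The one genuinely delicate step is Step 3: the concentration estimate \eqref{eq:concentration-of-free-energies} is pointwise in the model parameter, whereas we need uniformity over a continuum of models $\{\xi_t\}_{t\in[q-\eta,q+\eta]}$. Reducing this to pointwise concentration requires the uniform Lipschitz control of Lemma~\ref{lem:uniform-lipschitz} and the logarithmic slack between the net size $1/\delta$ and the Gaussian concentration rate $e^{-cN\epsilon^2}$, which is exactly what the hypothesis $\epsilon>K/N$ provides. Everything else is routine: the conditional identity \eqref{eq:conditional-criticality} linearizes the $E$-dependence, and once uniform concentration is in hand, Laplace's principle on a compact interval with Lipschitz exponent is standard.
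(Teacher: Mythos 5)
Your proposal is correct and follows essentially the same route as the paper: the conditional identity \eqref{eq:conditional-criticality}, a co-area/polar decomposition giving the $(1-t^2)^{(N-3)/2}$ Jacobian and hence $-I(t)$, a Lipschitz-plus-net argument (via Lemma~\ref{lem:uniform-lipschitz} and \eqref{eq:concentration-of-free-energies}) to make concentration uniform over the latitude $t$, and Laplace on the compact interval. The only cosmetic difference is that you use an $\epsilon/(4K)$-net where the paper uses a $1/N$-net; both are fine given the hypothesis $\epsilon>K/N$, and your observation that the $E$-dependence is purely deterministic correctly explains the uniformity over $E\in[E_0,E_\infty]$.
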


\begin{proof}
Fix $E_{0}\leq E\leq E_{\infty}$. By \eqref{eq:conditional-criticality},
we have that with respect to $P_{E}$,
\begin{align*}
\frac{1}{N}\log\int_{B(\mathbf{n},q,\eta)}e^{-\beta H(y)}dy & \eqdist \frac{1}{N}\log \int_{B(\bfn,q,\eta)} e^{-\beta q(x)^{p}NE+\beta\tilde{H}_{q}(y(x))} dy\\
&= \frac{1}{N}\log\int_{q-\eta}^{q+\eta}\int_{y\in\cS_{N-1}}e^{-\beta q^{p}NE+\beta\tilde{H}_{q}(y)}dy\left(1-q^{2}\right)^{\frac{N-3}{2}}dq + \frac{1}{N}\log c_N,
\end{align*}
where in the second line we have used the co-area formula with respect to the function $q(x)$,
and rescaled the inner integral to be on the sphere $\cS_{N-1}$ as opposed to the level set of $q$, namely
$\mathbb{S}^{N-2}(\sqrt{N(1-q^2)})$. Here $c_N$ is the ratio of the surface area of $\cS_{N-2}$ to that of $\cS_{N-1}$
which, by Stirling's formula, can be seen to be $O(\sqrt{N})$. Thus $(1/N) \log c_N = o(1)$.

Consequently, up to a deterministic, additive $o(1)$ correction the right hand side of the above is equal to 
\[
\frac{1}{N}\log\int_{q-\eta}^{q+\eta}\exp\left\{ N\left[-\beta q^{p}E-\frac{N-3}{N}I(q)+\frac{N-1}{N}F_{2,N-1}(q)\right]\right\} dq.
\]
Since $I(q)$ is locally Lipschitz and, on the event from \eqref{lem:uniform-lipschitz},
$F_{2,N-1}(q)$ is uniformly $K$-Lipschitz, we see that on said event,
the above is equal to 
\begin{align*}
 & \max_{t\in[q-\eta,q+\eta]}\left[-\beta q^{p}E-I(q)+F_{2,N-1}(q,\beta)\right]+O\left(\frac{1}{N}\right).
\end{align*}
If we let $D_{N}$  be a $\frac{1}{N}-$net of $[q-\epsilon,q+\epsilon]$, then on this event,
\[
\max_{t\in(q-\epsilon,q+\epsilon)}-\beta q^{P}E-I(q)+F_{2,N-1}(q,\beta)=\max_{t\in D_{N}}-\beta q^{p}E-I(q)+F_{2,N-1}(t,\beta)+O(\frac{1}{N}).
\]
Furthermore, by a union bound and \eqref{eq:concentration-of-free-energies}
we see that for any $\epsilon>0$ we have that 
\[
\max_{t\in D_{N}}|F_{2,N}(t,\beta)-\E F_{2,N}(t,\beta)|<\epsilon
\]
with probability $1-C'Ne^{-c'N\epsilon^{2}}$ for some $C',c'>0$.
Repeating the $\frac{1}{N}$-net argument and using the uniform Lipschitzness
of $\E F_{2,N}(\cdot,\beta)$ from \eqref{lem:uniform-lipschitz},
we see that 
\[
P_{E}(\abs{F_{N}(B(x_{N},q,\eta))-\max_{t\in(q-\eta,q+\eta)}-\beta q^{p}NE-I(q)+\E F_{2,N-1}(q,\beta)}\geq\epsilon/2)\leq C'e^{-c'N\epsilon^{2}}\vee Ce^{-CN}
\]
where we have used here the lower bound on $\epsilon$. Decreasing
$c'$ yields the result.
\end{proof}
We now focus on a specific choice of $q$ and study the behaviour of $F_2(q,\beta)$.
In the following, we say that a model, $f$, is replica symmetric at $\beta$
if $F(f,\beta)=\frac{\beta^{2}}{2}f(1).$  It is helpful to recall here the following test for replica 
symmetry of Talagrand  \cite[Prop 2.3]{TalSphPF06}: a model $f$ is replica symmetric
a inverse temperature $\beta$  
if and only if the function $g(t) = \beta^2f(t) +\log(1-t)+t$ satisfies
\begin{equation}\label{eq:tal-test}
g(t)\leq 0 \qquad \forall 0\leq t\leq 1
\end{equation}

Recall now  $\beta_{*}(E)$ and observe that it is increasing in $E$. 
Furthermore, an explicit calculation shows that
\[
\beta_{*}(E_{\infty})^{-1}=\sqrt{(p-1)\frac{(p-2)^{p-2}}{p^{p-1}},}
\]
so that 
\begin{equation}\label{eq:temp-relations}
T_{BBM}=\beta_*(E_0)^{-1}\geq\beta_{*}(E)^{-1}\geq\beta_{*}(E_{\infty})^{-1}>T_{sh}
\end{equation}
for all $E\in[E_{0},E_{\infty}]$, where the inequalities are strict except, of course, at the end points $E\in\{E_0,E_\infty\}$. 
Recall $q_{*}(\beta,E)$ and $q_{**}$ from \prettyref{sec:BBM-bound-intro}.
\begin{lem}
\label{lem:RS-for-large-q} Suppose that $p\geq3$. Let $E_{0}\leq E\leq E_{\infty}$,
$\beta>\beta_{*}(E)$, and $q_{*}=q_{*}(\beta,E)$. There is an $\epsilon=\epsilon(E,\beta)>0$
such that for all $q_*-\eps\leq q\leq 1$, $\xi(t,q)$
is replica symmetric at $\beta$,
that is, 
\[
F_{2}(q,\beta)=\frac{\beta^{2}}{2}\xi(1,q)=\frac{1}{2}\beta^{2}\left(1-q^{2p}-pq^{2p-2}(1-q^{2})\right).
\]
Furthermore if $\beta>\beta_{*}(E_{\infty})$, then the same result
holds for all $E_{0}\leq E\leq E_{\infty}$ and all $q_{**}(\beta)\leq q\leq1$. 

\end{lem}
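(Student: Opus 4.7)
The plan is to apply Talagrand's replica symmetry criterion \eqref{eq:tal-test} to the co-dimension one model $\xi_q$ defined in \eqref{eq:xi-q}: the model is replica symmetric at inverse temperature $\beta$ if and only if
\[
g_q(t) := \beta^2 \xi_q(t) + \log(1-t) + t \leq 0 \quad \text{for all } t \in [0,1).
\]
Using the explicit form \eqref{eq:xi-q} one verifies $\xi_q(0) = \xi_q'(0) = 0$, so $g_q(0) = g_q'(0) = 0$; combined with $g_q(t)\to-\infty$ as $t\to 1^-$, it will suffice to establish global concavity of $g_q$ on $[0,1)$, i.e., $\beta^2(1-t)^2 \xi_q''(t) \leq 1$ throughout.

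The key computation is the following. Set $h(t) := [(1-q^2)t + q^2]^{p-2}(1-t)^2$; factoring a direct differentiation gives
\[
h'(t) = (1-t)\,[(1-q^2)t+q^2]^{p-3}\,\{(p-2-pq^2) - p(1-q^2)t\},
\]
and the braced factor is non-positive throughout $[0,1]$ exactly when $q^2 \geq (p-2)/p$. Hence for such $q$, $h$ is non-increasing and its maximum is $h(0) = q^{2(p-2)}$. Using $\xi_q''(t) = p(p-1)(1-q^2)^2[(1-q^2)t+q^2]^{p-2}$, the concavity requirement collapses to the single inequality
\[
\beta^2 p(p-1)(1-q^2)^2 q^{2(p-2)} \leq 1 \ \Longleftrightarrow\ (1-q^2)q^{p-2} \leq \frac{1}{\beta\sqrt{p(p-1)}}.
\]

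The right-hand side is exactly the value of the right-hand side of the fixed-point equation \eqref{eq:fixed-point} at $E = E_\infty$, so the displayed inequality characterizes $q \geq q_{**}(\beta)$, the larger root of \eqref{eq:fixed-point} at $E=E_\infty$, which necessarily lies on the decreasing branch of the unimodal map $f(q):=(1-q^2)q^{p-2}$ past its maximum at $\sqrt{(p-2)/p}$. To reduce the general case to this one, I will verify that $s\mapsto s-\sqrt{s^2 - E_\infty^2}$ is decreasing for $s\geq -E_\infty$; this makes the right-hand side of \eqref{eq:fixed-point} maximal over $E\in[E_0,E_\infty]$ at $E=E_\infty$, whence $q_*(E,\beta)\geq q_{**}(\beta)$, strictly when $E<E_\infty$. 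Taking $\epsilon := q_*(E,\beta) - q_{**}(\beta) > 0$ then gives $[q_*-\epsilon,1]\subseteq[q_{**},1]$, and replica symmetry follows from the concavity step; the explicit value $F_2(q,\beta) = \frac{\beta^2}{2}\xi_q(1) = \frac{\beta^2}{2}(1-q^{2p}-pq^{2p-2}(1-q^2))$ is then immediate from the expansion of $\xi_q(1)$ using \eqref{eq:xi-q}. The ``furthermore'' clause requires only the $q\geq q_{**}(\beta)$ version of the same concavity argument, applied uniformly in $E$.

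The main technical step is the sign analysis for $h'$, which crucially uses $p\geq 3$ (so that $\sqrt{(p-2)/p}$ is positive and the exponent $p-3$ is non-negative); beyond this, the argument reduces to elementary calculus on the fixed-point equation. A minor boundary issue is that $E=E_\infty$ gives $q_*(E_\infty,\beta)=q_{**}(\beta)$ and hence $\epsilon=0$, so the strict $\epsilon>0$ claimed in the lemma is naturally interpreted as covering $E<E_\infty$ in the first part, while the ``furthermore'' clause directly handles the edge case $E=E_\infty$.
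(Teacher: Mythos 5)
Your proof follows essentially the same route as the paper's: you invoke Talagrand's replica symmetry test \eqref{eq:tal-test}, reduce it to concavity of $g_q$ via the second derivative $\xi_q''$, and verify the resulting scalar inequality using the fixed-point equation \eqref{eq:fixed-point}. Your monotonicity computation for $h$ (factoring $h'$ and reading off the sign of the linear factor in $t$) is a bit more direct than the paper's critical-point analysis of the same function, but the inequality you land on, namely $(1-q^2)q^{p-2}\leq \frac{1}{\beta\sqrt{p(p-1)}}$ together with $q^2\geq(p-2)/p$, is exactly the paper's.

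There is, however, a genuine (if modest) gap in your reduction of the first claim. You reformulate the concavity requirement as ``$q\geq q_{**}(\beta)$'' and then obtain the first claim by showing $q_*(E,\beta)\geq q_{**}(\beta)$ and setting $\epsilon = q_*(E,\beta)-q_{**}(\beta)$. But this presupposes that $q_{**}(\beta)=q_*(E_\infty,\beta)$ is defined, i.e.\ $\beta\geq\beta_*(E_\infty)$. Since $\beta_*$ is \emph{increasing} in $E$ (so $\beta_*(E)\leq\beta_*(E_\infty)$ for $E\leq E_\infty$), the hypothesis $\beta>\beta_*(E)$ of the first claim is strictly weaker, and for $E<E_\infty$ the range $\beta\in(\beta_*(E),\beta_*(E_\infty)]$ is nonempty and falls outside your argument. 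The fix is easy: for such $\beta$ the threshold $\frac{1}{\beta\sqrt{p(p-1)}}$ exceeds the global maximum of $f(q)=(1-q^2)q^{p-2}$, so the key inequality is automatic, and one only needs $q^2\geq(p-2)/p$, which holds on $[q_*-\epsilon,1]$ with $\epsilon = q_*(E,\beta)-\sqrt{(p-2)/p}>0$. The paper avoids this case split by verifying the inequality directly at $q=q_*(E,\beta)$ via the fixed-point relation rather than detouring through $q_{**}$. Your observation about the $E=E_\infty$ boundary (where $\epsilon=0$) is correct and, notably, is equally an issue in the paper's own wording; only $q\geq q_*$ is used downstream, so it is harmless.
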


\begin{proof}
We begin with the following observations. 
Observe that the statement that $\xi(t,q)$ is replica symmetric is equivalent
to the statement that $F_2(q,\beta)= F(\beta;\xi_q)=\beta^2 \xi(1;q)$.
Recall that by Talagrand's test \eqref{eq:tal-test} that this holds
provided that the function
\[
f(t)=\beta^{2}\xi(t;q)+\log(1-t)+t,
\]
satisfies $f(t)\leq0$ for all $0\leq t\leq1$. To this end, note that $f(0)=f'(0)=0$
and that
\[
f''(t)=\beta^{2}\xi''(t;q)-\frac{1}{(1-t)^{2}}=\frac{\beta^{2}g(t)-1}{(1-t)^{2}},
\]
where
\[
g(t)=p\cdot(p-1)\cdot(1-q^{2})^{2}(t(1-q^{2})+q^{2})^{p-2}(1-t)^{2}.
\]
For $p\geq4,$ observe that $g$ has three critical points, at 
\[
t_1=1\qquad t_2=-\frac{q^{2}}{1-q^{2}}\qquad t_3=\frac{p(1-q^{2})-2}{p(1-q^{2})},
\]
which satisfy $t_2<t_3<t_1$,
and for $p=3$, $g$ has two critical points at $t_1$ and $t_3$.
Futhermore, we have that
\[
g''(t_3) = -2(\frac{p-2}{p})^{p-3}<0
\]
so that $t_3$ is a local maximum, so that $g$ is decreasing for $t_3\leq t\leq t_1$. 
Notice that
$t_3$ is non-positive provided that $q^{2}\geq\frac{p-2}{p}$. This holds for $q\geq q_*>\sqrt{\frac{p-2}{p}}$ by definition of $q_*$ 
since $\beta>\beta_*(E)$. Putting
these observations together we see that $g(t)\leq g(0)$ for $0\leq t\leq1$. Thus it suffices
to check that for such $q$, we have that $\beta^{2}g(0)\leq1$. This is equivalent
to showing that 
\[
(1-q^{2})q^{p-2}\leq\frac{1}{\beta\sqrt{p\cdot p-1}}.
\]
Let us now turn to proving the claims in turn.

We begin with the first. Since $q_{*}$ solves the fixed
point equation \prettyref{eq:fixed-point} the above is equivalent
to
\[
\frac{1}{2\beta(p-1)}\left(-E-\sqrt{E^{2}-E_{\infty}^{2}}\right)\leq\frac{-E_{\infty}}{2\beta(p-1)}.
\]
which holds for $E\leq E_{\infty}$ since $x-\sqrt{x^{2}-1}\leq0$ for all $x\geq1$.
Furthermore for $q\geq q_{*}(E,\beta)>\sqrt{(p-2)/p}$ we
see that $q\mapsto(1-q^{2})q^{p-2}$ is decreasing, so that this holds
for $q\geq q_{*}(E,\beta)-\eps$ for some $\eps$. sufficiently small.

It remains to consider the case of $\beta\geq\beta_{*}(E_{\infty})$, 
$E_{0}\leq E\leq E_{\infty}$, and $q_{*}(E_{\infty},\beta)\leq q\leq1$. 
If we let $q_{**}=q_{*}(E_{\infty},\beta)$, then  
\[
(1-q_{**}^{2})q_{**}^{p-2}=\frac{1}{\beta\sqrt{p(p-1)}}.
\]
Furthermore, since the function $q\mapsto(1-q^{2})q^{p-2}$ is decreasing,
we see the desired inequality holds for all $q\geq q_{**}$.
\end{proof}
We note the following immediate corollarly.
\begin{cor}\label{cor:ftap-is-frs}
For $p\geq 3$, let $E_0\leq E\leq E_\infty$, $\beta>\beta_*(E)$.
For all $q\geq q_*(E,\beta)$, we have that
\[
F_{TAP}(E,q,\beta)=F_{RS}(E,q,\beta).
\]
\end{cor}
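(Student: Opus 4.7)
The plan is to observe that this corollary is essentially a direct rephrasing of the preceding Lemma \ref{lem:RS-for-large-q}, so the proof amounts to checking that the algebraic difference between $F_{TAP}$ and $F_{RS}$ is exactly the quantity that the lemma identifies.

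First, I would subtract the two free energies. Both $F_{TAP}(E,q,\beta)=-\beta q^p E + F_2(q,\beta) - I(q)$ and
\[
F_{RS}(E,q,\beta) = -\beta q^p E - I(q) + \tfrac{1}{2}\beta^2\bigl(1-q^{2p}-pq^{2p-2}(1-q^2)\bigr)
\]
share the terms $-\beta q^p E$ and $-I(q)$, which cancel. What remains is
\[
F_{TAP}(E,q,\beta) - F_{RS}(E,q,\beta) = F_2(q,\beta) - \tfrac{1}{2}\beta^2\bigl(1-q^{2p}-pq^{2p-2}(1-q^2)\bigr).
\]

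Next, I would recognize that the quantity subtracted from $F_2(q,\beta)$ is precisely $\tfrac{1}{2}\beta^2 \xi(1,q)$: evaluating the formula \eqref{eq:xi-q} at $t=1$ gives
\[
\xi(1,q) = \bigl[(1-q^2)+q^2\bigr]^p - q^{2p} - p(1-q^2)q^{2p-2} = 1-q^{2p}-pq^{2p-2}(1-q^2).
\]
Thus $F_{TAP}(E,q,\beta)=F_{RS}(E,q,\beta)$ is equivalent to the statement that the co-dimension 1 model $\xi(\cdot,q)$ is replica symmetric at inverse temperature $\beta$, i.e.\ $F_2(q,\beta)=\tfrac{1}{2}\beta^2\xi(1,q)$.

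Finally, I would invoke Lemma \ref{lem:RS-for-large-q}, whose first conclusion gives exactly this replica symmetric identity under the hypothesis $q\geq q_*(E,\beta)$ (in fact in a slightly larger neighborhood $q\geq q_*(E,\beta)-\epsilon$). Combining this with the algebraic identification above yields the corollary. There is no real obstacle, as all the work is already contained in the verification of Talagrand's condition \eqref{eq:tal-test} carried out in the proof of Lemma \ref{lem:RS-for-large-q}.
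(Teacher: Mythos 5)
Your proof is correct and follows exactly the route the paper intends: the paper simply declares this an ``immediate corollary'' of Lemma \ref{lem:RS-for-large-q}, and your algebra spelling out that $F_{TAP}-F_{RS}=F_2(q,\beta)-\tfrac{1}{2}\beta^2\xi(1,q)$, together with the observation that the lemma asserts this difference vanishes for $q\geq q_*(E,\beta)$, is precisely the verification left implicit there.
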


\section{A Thouless--Anderson--Palmer decomposition}\label{sec:TAP-FE}

The goal of this section is to prove \prettyref{thm:ring-FE-lower-bound}.
This result will follow from a more general result which shows that decompositions of this 
type hold at an energy level provided a certain geometric condition holds on the collection 
of critical points. 
\begin{thm}\label{thm:abstract-nonsense}
Let $p\geq 3$, $0<r<1$, and $E_0<E< E_\infty$. If 
the landscape at $E$ is essentially $r$-separated, then for every $\sqrt{\frac{1+r}{2}}<q<1$,
there are sequences $\eps_N,\eta_N\to0$ and a sequence of random sets 
$A_N\subseteq \cC_N(E-\eps_N,E+\eps_N)$ such that for every $\beta>0$:
\begin{align*}
\frac{1}{N}\log\abs{A_N} &= \Theta(E)+o_\mathbb{P}(1)\\
F_N\left(\cup_{x\in A_N}B(x,q,\eta_{N});\beta\right) & = F_{TAP}(E,q,\beta)+\Theta(E)+o_{\mathbb{P}}(1)\\
\sup_{x\in A_N}\abs{F_N(B(x,q,\eta_N);\beta)-F_{TAP}(E,q,\beta)} &= o_{\mathbb{P}}(1),
\end{align*}
and such that the balls $\{B(x,q,\eta_{N})\}_{x\in A_N}$ are
pairwise disjoint and have their centres satisfy $R(x,y)<r$ with probability tending to 1.
\end{thm}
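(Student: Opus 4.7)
The plan is to construct $A_N$ by successively pruning the raw set of critical points $\cC_N^0 := \cC_N(E-\eps_N, E+\eps_N)$ for a sequence $\eps_N \to 0$ chosen slowly enough that Subag's sharp second-moment complexity results give $\frac{1}{N}\log|\cC_N^0| = \Theta(E) + o_{\mathbb{P}}(1)$. Two prunings will remove (i) the critical points whose surrounding band has atypical free energy, and (ii) one element from each pair of critical points with overlap in $(r,1)$. Essential $r$-separation (Definition \ref{def:separated}) handles (ii) at exponentially negligible cost; the substantive work lies in (i).

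For (i), I would combine the band concentration estimate \eqref{eq:band-concentration} with the Kac--Rice formula applied to the non-negative functional indicating that the band free energy deviates from $F_{TAP}$. Using the conditional representation $H_{N,p}|_{A(\mathbf n, E')}\eqdist NE' q(x)^p + \tilde H_q(y(x))$, this yields
\[
\E\bigl|\{x\in \cC_N^0 : |F_N(B(x,q,\eta_N);\beta) - F_{TAP}(E,q,\beta)| > \delta\}\bigr| \leq e^{N\Theta(E) + o(N)} \cdot C e^{-Nc\delta^2},
\]
where I have used $\E F_{2,N}(q,\beta) \to F_2(q,\beta)$ (Chen) to identify the centering from \eqref{eq:band-concentration} with $F_{TAP}(E,q,\beta)+o(1)$, together with uniformity of that concentration in the conditioning energy $E'\in[E-\eps_N,E+\eps_N]$. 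A diagonal argument then produces $\delta_N\to 0$ such that the subset $A_N^{(1)}\subseteq \cC_N^0$ whose bands satisfy $|F_N(B(x,q,\eta_N);\beta)-F_{TAP}(E,q,\beta)|\leq \delta_N$ has $|A_N^{(1)}|/|\cC_N^0|\to 1$ in probability. A further pruning for separation via Definition \ref{def:separated} removes only $e^{-N\delta}|\cC_N^0|$ points, yielding the final $A_N$ with $\frac{1}{N}\log|A_N| = \Theta(E) + o_{\mathbb{P}}(1)$ and $R(x,y)<r$ for all distinct $x,y\in A_N$.

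Disjointness is elementary spherical geometry: if $z\in B(x,q,\eta_N)\cap B(y,q,\eta_N)$, decomposing the unit vectors $\hat x, \hat y$ along $\hat z$ and applying Cauchy--Schwarz yields $R(x,y)\geq 2(q-\eta_N)^2 - 1$. The assumption $q>\sqrt{(1+r)/2}$ makes $2q^2-1>r$, so for $\eta_N$ small enough the constraint $R(x,y)<r$ forces the bands to be disjoint. Given disjointness,
\[
F_N\Bigl(\bigcup_{x\in A_N} B(x,q,\eta_N);\beta\Bigr) = \frac{1}{N}\log\sum_{x\in A_N} e^{N F_N(B(x,q,\eta_N);\beta)},
\]
which combined with the uniform $\delta_N$-closeness of each summand to $e^{N F_{TAP}(E,q,\beta)}$ simplifies to $F_{TAP}(E,q,\beta) + \frac{1}{N}\log|A_N| + o(1) = F_{TAP}(E,q,\beta) + \Theta(E) + o_{\mathbb{P}}(1)$, closing all three displayed identities.

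The main obstacle is the uniform control in step (i). One cannot apply the pointwise concentration lemma across the random set $\cC_N^0$ directly, since the locations of the critical points and the Hamiltonian are coupled. The Kac--Rice representation is the essential trick, converting the supremum over random critical points into a first-moment calculation that pits the exponential complexity $e^{N\Theta(E)}$ against the Gaussian concentration $e^{-Nc\delta^2}$, with concentration winning for any fixed $\delta>0$. A secondary technicality is making the concentration uniform over the conditioning energy $E'\in[E-\eps_N,E+\eps_N]$, but this follows from continuity of the constants in the proof of \eqref{eq:band-concentration}.
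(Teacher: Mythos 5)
Your proposal is correct and follows essentially the same route as the paper: a Kac--Rice first-moment bound (the paper packages this as Lemma~\ref{lem:no-bad-bands}, invoking \cite[Lemma 14]{SubGibbs16}) to prune critical points with atypical band free energy, Subag's second-moment result \eqref{eq:subag-convergence-refined} together with \eqref{eq:ABC} for the cardinality, essential $r$-separation to prune overlapping pairs, the same spherical-geometry computation for disjointness under $q>\sqrt{(1+r)/2}$, and a diagonalization to send $\delta,\eta,\eps\to 0$. The only presentational difference is that the paper phrases the Kac--Rice step as a probability bound via Markov's inequality against a threshold $t_N\E e^{N\Theta_N(E+\eps)}$, while you state the equivalent expectation bound directly.
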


Before turning to the proof of this result let us pause to discuss how one verifies essential $r$-separation
and the proof of \prettyref{thm:ring-FE-lower-bound}.
To this end, let us recall the following results. The main result of 
\cite{subag2017complexity} states that
for any $E_{0}<E'\leq E_{\infty},$ we have  
\begin{equation}
\frac{\abs{\cC_N(-\infty,E)}}{\E\abs{\cC_N(-\infty,E)}}\to1\label{eq:subag-convergence-refined}
\end{equation}
 in probability. Recall also the following result of \cite{ABC13}: for any $E\in \R$,
 \begin{equation}\label{eq:ABC}
 \lim \frac{1}{N} \log \E \Theta_N(E) = \Theta(E).
 \end{equation}
 Note that \eqref{eq:subag-convergence-refined} and \eqref{eq:ABC} yield
 \begin{equation}\label{eq:ABC-subag-1}
 \Theta_N(E) \to \Theta(E)
 \end{equation}
 in probability for $E\in(E_0,E_\infty)$. On the other hand, by \cite[Corr. 13]{SubGibbs16}, we have that
for any $p\geq 3$ and any $r>0$, there is some
$\delta$ such that
\[
\lim P(\abs{\{x,x'\in\cC_N(E-\eps,E+\eps): x\neq \pm  x', \abs{R(x,x')}>r\}} \geq 1 )= 0
\]
for all $E$ and $\eps$ with $E_0\leq E-\eps <E+\eps< E_0+\delta$. 
Combing this with  \eqref{eq:ABC-subag-1} and the fact that $\Theta(E)>0$ if $E>E_0$ (so that for some $s>0$, we have $\Theta_N(E)>s$ with probability tending to $1$),
we immediately obtain:
\begin{lem}\label{lem:subag-separated}
Let $p\geq3$. For any $r>0$, the landscape at $E$ is essentially $r$-separated for all $E_0\leq E<E_0+\delta_0$
for some $\delta_0>0$.
\end{lem}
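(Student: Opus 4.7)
The plan is to deduce the lemma directly from the three ingredients just cited: the near-sharp concentration of critical point counts from \cite{subag2017complexity} (equation \eqref{eq:subag-convergence-refined}), the complexity formula of \cite{ABC13} (equation \eqref{eq:ABC}), and the geometric dispersion estimate \cite[Cor.~13]{SubGibbs16}. The definition of essential $r$-separation is really two things: that the numerator of pairs with $r<R(x,x')<1$ is very small, and that the denominator $|\cC_N(E-\eps,E+\eps)|$ is positive; I would handle these in turn.

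For the numerator, I would first observe that the set $\{(x,x'): r<R(x,x')<1\}$ is contained in the set $\{(x,x'): x\neq \pm x',\, |R(x,x')|>r\}$ appearing in \cite[Cor.~13]{SubGibbs16} (since $r>0$ rules out $R=-1$ and $R<1$ rules out $x=\pm x'$). That corollary gives a $\delta_0=\delta_0(r)>0$ such that for all $E,\eps$ satisfying $E_0\le E-\eps<E+\eps<E_0+\delta_0$, the probability that this bigger set has cardinality at least $1$ tends to $0$. Since the cardinality is an integer, this is the same as saying the numerator equals $0$ with probability tending to $1$.

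For the denominator, I would use \eqref{eq:ABC-subag-1}: $\Theta_N(E)\to\Theta(E)$ in probability. Since $\Theta$ is strictly increasing on $[E_0,E_\infty]$ with $\Theta(E_0)=0$, for $E\in[E_0,E_0+\delta_0)$ I can choose $\eps$ small enough that $\Theta(E+\eps)>\Theta(E-\eps)$ and $\Theta(E+\eps)>0$; then $|\cC_N(-\infty,E+\eps)|$ is exponentially large with high probability. If $E-\eps>E_0$ this immediately forces $|\cC_N(E-\eps,E+\eps)|\to\infty$; if $E-\eps\le E_0$ then $\mathbb{E}|\cC_N(-\infty,E-\eps)|$ is at most exponentially small (by \eqref{eq:ABC} and $\Theta(\cdot)<0$ below $E_0$), so by Markov the subtracted term is $0$ w.h.p. and again the window contains (many) critical points w.h.p. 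Combining the two steps, for any $\delta>0$ the inequality $|\{\text{bad pairs}\}|\ge e^{-N\delta}|\cC_N(E-\eps,E+\eps)|$ fails with probability tending to $1$, which is exactly essential $r$-separation.

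I do not expect a real obstacle here: the entire geometric content is already packaged in \cite[Cor.~13]{SubGibbs16}, and the complexity convergence does the rest. The only mild subtlety is the endpoint $E=E_0$, where the Subag corollary as stated requires $E-\eps\ge E_0$; one resolves this either by shifting the window slightly upward (since the critical count in $(E_0-\eps,E_0]$ is negligible w.h.p.) or simply by applying the corollary to the enlarged window $(E_0,E_0+2\eps)$ and using inclusion, which only strengthens the bound on the numerator.
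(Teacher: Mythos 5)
Your proposal takes the same route as the paper: the paper does not give a separate proof but simply records the three ingredients --- Subag's second-moment count \eqref{eq:subag-convergence-refined}, the complexity limit \eqref{eq:ABC}, and \cite[Cor.~13]{SubGibbs16} --- and declares the lemma immediate, whereas you carry out the implicit bookkeeping (numerator vanishes w.h.p.\ by the orthogonality corollary once one observes $\{r<R<1\}\subseteq\{x\neq\pm x',|R|>r\}$; denominator is exponentially large w.h.p.\ by strict monotonicity of $\Theta$). That verification is correct.

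One small caveat about the endpoint $E=E_0$. Your second proposed fix --- applying Cor.~13 to the window $(E_0,E_0+2\eps)$ and ``using inclusion'' --- does not work as stated, since $(E_0-\eps,E_0+\eps)\not\subseteq(E_0,E_0+2\eps)$. Your first fix rests on the claim that $|\cC_N(E_0-\eps,E_0]|$ is negligible w.h.p.; but because $\Theta(E_0)=0$, the first moment $\E|\cC_N(-\infty,E_0)|$ is only subexponential, not vanishing, so you cannot immediately conclude emptiness --- only $|\cC_N(-\infty,E_0-\eta)|=0$ w.h.p.\ for a \emph{fixed} $\eta>0$, which does not cover the whole sub-window. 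This is a genuine gap if one insists on $E=E_0$, but it is harmless: the lemma is only ever invoked for $E$ strictly inside $(E_0,E_0+\delta)$ (see \prettyref{thm:ring-FE-lower-bound}), where for $\eps<E-E_0$ the hypothesis $E_0\le E-\eps$ of Cor.~13 is met and the argument closes cleanly. The paper's ``$E_0\le E$'' in the statement carries the same imprecision.
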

With this in hand, the proof of \prettyref{thm:ring-FE-lower-bound} is now complete.

\begin{proof}[\textbf{\emph{Proof of \prettyref{thm:ring-FE-lower-bound}}}]
This follows by combining \prettyref{lem:subag-separated} and \prettyref{thm:abstract-nonsense}.
\end{proof}

\subsection{Proof of \prettyref{thm:abstract-nonsense}}
Let us now turn to the proof of \prettyref{thm:abstract-nonsense}. 
We begin by noting that since $\E F_{2,N}(\cdot,\beta)$ is uniformly
Lipschitz by \prettyref{lem:uniform-lipschitz} and converges pointwise to $F_{2}(\cdot,\beta)$ and since $I(\cdot)$
is locally Lipschitz, it follows that for any $E<0$, $\beta>0$,
$0<q<1$, and $\eta>0$ with $\eta<\frac{1}{2}\left(q\wedge1-q\right)$,
\begin{equation}
\lim_{N\to\infty}\max_{t\in[q-\eta,q+\eta]}\{-\beta t^{p}E-I(t)+\E F_{2,N}(t,\beta)\}=\max_{t\in[q-\eta,q+\eta]}F_{TAP}(E,t,\beta).\label{eq:F_2-mean-converge}
\end{equation}
Consequently, by \prettyref{lem:uniform-lipschitz}, there is a $K>0$
such that for any $\eta<\frac{1}{2}q\wedge1-q$, $E_{0}\leq E\leq E_{\infty}$,
and $t\in[q-\eta,q+\eta]$, we have that
\begin{equation}
\abs{\lim_{N\to\infty}\max_{t\in[q-\eta,q+\eta]}\{-\beta t^{p}E-I(t)+\E F_{2,N}(t,\beta)\}-F_{TAP}(E,q,\beta)}\leq K\eta.\label{eq:F_2-mean-lb}
\end{equation}

The following key lemma shows that a macroscopic fraction of the (exponentially
many) critical points with a certain energy have free energies that
are well-approximated by $F_{TAP}$. This will follow by an application of the first moment 
method combined with a Kac--Rice-type argument. The precise form we use here
is from \cite[Lemma 14]{SubGibbs16} which is tailored exactly to our setting.
Recall from \cite{SubGibbs16} that restricted free energies satisfy
the \emph{tameness} property used there. 
In the following, let $\mathbf{n}=\sqrt{N}e_{1}$
and let $P_{u}$ denote the law of the Gaussian process, $(H(x)),$ conditioned
on $\mathbf{n}$ being a critical point with energy $Nu$.

For $E_{0}\leq E\leq E_{\infty}$ and $0<q<1$ and $\delta,\epsilon,K>0$
define the set 
\begin{align*}
A(E,q,\epsilon,\eta,\delta,K,\beta) & =\{x\in\cC_N(E-\epsilon,E+\epsilon):|F_{N}(B(x,q,\eta);\beta)-F_{TAP}(E,q,\beta)|<\delta+K(\eps+\eta)\}.
\end{align*}
We then have the following.

\begin{lem}
\label{lem:no-bad-bands}
For any $\beta>0$, $E\in(E_{0},E_{\infty})$, $n\geq1$ and sequence $0<q_1<\ldots<q_n<1$,
there are some $K,c,\delta_{0},\eta_0,\eps_0>0$ such that for any $\delta<\delta_{0}$,
 $\eta<\eta_0$,  $\eps<\eps_0$, and any sequence $0<t_N<1$ we have that
that
\[
\limsup\frac{1}{N}\log P\Big(\big|\bigcup_i (A(E,q_i,\epsilon,\eta,\delta,K,\beta)^{c})\big|\geq t_N \E e^{N\Theta_{N}(E+\epsilon)}\Big)\leq-c\delta^{2}+\limsup\frac{1}{N}\log(1/t_N)
\]
\end{lem}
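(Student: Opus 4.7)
My plan is to combine Markov's inequality with a Kac--Rice-type reduction that converts the expected count of ``bad'' critical points into a conditional probability already controlled by \eqref{eq:band-concentration}.

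First I would apply Markov's inequality to the probability in question, using $\abs{\bigcup_i A(E,q_i,\eps,\eta,\delta,K,\beta)^c}\leq \sum_i\abs{A(E,q_i,\eps,\eta,\delta,K,\beta)^c}$ followed by a union bound over $i=1,\dots,n$. Since $n$ is a fixed constant this union bound is free on the exponential scale, so the task reduces to showing, for each $i$,
\[
\E\abs{A(E,q_i,\eps,\eta,\delta,K,\beta)^c} \leq C e^{-cN\delta^2}\,\E e^{N\Theta_N(E+\eps)}.
\]
The event that a critical point $x\in\cC_N(E-\eps,E+\eps)$ fails to lie in $A$ depends on the Hamiltonian only through a restricted free energy on a band around $x$, hence is a ``tame'' functional in the sense of \cite{SubGibbs16}. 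I would therefore invoke their Lemma~14 to get
\[
\E\abs{A(E,q_i,\eps,\eta,\delta,K,\beta)^c}\leq \E\abs{\cC_N(E-\eps,E+\eps)}\cdot \sup_{u\in[E-\eps,E+\eps]} P_u\big(\mathbf{n}\notin A(E,q_i,\eps,\eta,\delta,K,\beta)\big),
\]
where $P_u$ is the Gaussian law conditioned on $\mathbf{n}$ being critical with energy $Nu$. By \eqref{eq:ABC} and monotonicity of $\Theta_N$, the prefactor agrees with $\E e^{N\Theta_N(E+\eps)}$ up to subexponential corrections, and therefore cancels in the ratio.

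It remains to bound the conditional supremum. Fix $u\in[E-\eps,E+\eps]$. By \eqref{eq:band-concentration},
\[
P_u\Big(\big|F_N(B(\mathbf{n},q_i,\eta);\beta)-M_N(u,q_i,\eta)\big|\geq \delta/2\Big)\leq C e^{-cN\delta^2},
\]
where $M_N(u,q_i,\eta)$ denotes the maximum over $t\in[q_i-\eta,q_i+\eta]$ appearing inside \eqref{eq:band-concentration}. Combining \eqref{eq:F_2-mean-lb}, the uniform Lipschitz bound of \prettyref{lem:uniform-lipschitz}, and $|u-E|\leq \eps$, one checks $|M_N(u,q_i,\eta)-F_{TAP}(E,q_i,\beta)|\leq K(\eps+\eta)$ for some $K$ independent of $N$, once $\eps_0,\eta_0$ are chosen small enough. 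On the complement of the concentration event $\mathbf{n}$ therefore satisfies the defining inequality of $A(E,q_i,\eps,\eta,\delta,K,\beta)$ for this choice of $K$, so the conditional failure probability is at most $C e^{-cN\delta^2}$, uniformly in $u$.

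Chaining the three steps and taking $\frac{1}{N}\log$ of the Markov estimate yields the announced $-c\delta^2+\limsup\frac{1}{N}\log(1/t_N)$. The main subtlety I expect is the Kac--Rice factorization in the second step, because the event inside $A^c$ is a global functional of the Hamiltonian rather than a local jet at $\mathbf{n}$; this is precisely what ``tameness'' in \cite{SubGibbs16} was introduced to handle, so once one appeals to their Lemma~14 the rest is a routine combination of the already-established concentration estimate and Markov's inequality.
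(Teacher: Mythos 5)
Your proposal is correct and takes essentially the same approach as the paper: both reduce the event to a first-moment bound on the number of ``bad'' critical points via Markov's inequality, invoke tameness and the Kac--Rice-type Lemma~14 of \cite{SubGibbs16} to transfer this to a conditional probability $P_u(\mathbf{n}\notin A)$, and then control that conditional probability uniformly in $u\in[E-\eps,E+\eps]$ by combining \eqref{eq:band-concentration} with the Lipschitz estimate \eqref{eq:F_2-mean-lb} and a union bound over the finitely many $q_i$. The only cosmetic difference is the order of operations (you apply Markov first and then the per-$i$ moment bound; the paper establishes the conditional probability bound and the moment bound before Markov), which does not affect the argument.
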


\begin{proof}
Let $G^{(i)}_{N}(x)=F_N(B(x,q_i,\eta);\beta)$ and let
\begin{align*}
\tilde{A}(q) & =\{y\in\R:|y-F_{TAP}(E,q,\beta)|\leq\delta+K(\epsilon+\eta)\}
\end{align*}
where we will choose $K$ momentarily. As $E\in (E_0,E_\infty)$,
we may choose $\eps>0$ so that an $\eps$-neighbourhood of $E$ lies in this set as well.

Combining \eqref{eq:band-concentration} with \eqref{eq:F_2-mean-lb},
we see that if we choose $K$ to be larger than $2(K'+\beta)$ with $K'$ from \eqref{eq:F_2-mean-lb},
then for $N$ sufficiently large we have that for any $E-\eps<E'<E+\eps$,
\[
\frac{1}{N}\log P_{E'}(G^{(i)}_{N}(x)\in\tilde{A}(q_i)^{c})
\leq\frac{1}{N}\log P_{E'}(\abs{G^{(i)}_N(x)-\max_{s\in[q_i-\eta,q_i+\eta]}-\beta Es^{p}-I(s)+\E F_{2,N}(s,\beta)}\geq\delta)
\leq -c\delta^2
\]
for $\delta$ small enough but order 1, where in the second inequality
we have used that $K,\delta>0$. By a union bound, we then see that 
\begin{equation}\label{eq:prob-exp-bound}
\frac{1}{N}\log P_{E'}(\cup_i \{G^{(i)}_{N}(x)\in\tilde{A}(q_i)^{c}\}) \leq -c\delta^2 + \frac{\log n}{N}.
\end{equation}

On the other hand, since $G^{(i)}_N(x)$ is tame, we see that
by \cite[Lemma 14]{SubGibbs16}, for each $i$,
\[
\limsup\frac{1}{N}\log\left[\E\abs{\left\{ x\in\cC_N(E-\eps,E+\eps):G^{(i)}_{N}(x)\in\tilde{A}(q_i)^{c}\right\} }\right]\leq\sup_{t\in(E-\eps,E+\eps)}\Theta(t)-c\delta^{2}\leq\Theta(E+\eps)-c\delta^{2},
\]
where the last inequality follows since $\Theta$ is monotone increasing,
so that by a union bound, 
\begin{equation}\label{eq:moment-exp-bound}
\limsup \frac{1}{N} \log\left[\E\abs{\left\{ x\in\cC_N(E-\eps,E+\eps): \exists i\in[n]: G^{(i)}_{N}(x)\in\tilde{A}(q_i)^{c}\right\} }\right]
\leq \Theta(E+\eps)-c\delta^2
\end{equation}

Now, recall that by \eqref{eq:ABC}, we have that
\[
\lim \frac{1}{N}\log \E e^{N \Theta_N(E+\eps)} = \Theta(E+\eps).
\]
As such, if we let $S$ denote the event to be
bounded, then by Markov's inequality and the above two bounds, we obtain 
\[
\limsup\frac{1}{N}\log P(S)\leq-c\delta^{2}+\limsup\frac{1}{N}\log\left(\frac{1}{t_N}\right),
\]
as desired.
\end{proof}

Let us now turn to the main result of this section.

\begin{proof}[\textbf{\emph{Proof of \prettyref{thm:abstract-nonsense}}}]
As $E\in(E_0,E_\infty)$ and $q>0$, by \prettyref{lem:no-bad-bands} with $n=1$, we have that there is a $c>0$ such
that for $\eps>0$ sufficiently small (but order $1$ in $N$), $\lambda>0$, and $0<t<1$, with probability $1-\frac{1}{t}e^{-cN\lambda^{2}}$,
there is a set $A_N\subseteq\cC_N(E-\epsilon,E+\epsilon)$ with 
\begin{equation}\label{eq:ftap-fluc}
\sup_{x\in A_N} |F_{N}(B(x,q,\eta);\beta)- F_{TAP}(E,q,\eta)|\leq \lambda + K(\eta+\epsilon)
\end{equation}
for some $K>0$ and with 
\begin{align*}
\abs{A_N} & \geq\abs{\cC_N(E-\epsilon,E+\epsilon)}-t\E e^{N\Theta_{N}(E+\epsilon)}
  \geq\abs{\cC_N(-\infty,E+\epsilon)}\left(\left(1-tW_{N}\right)-\frac{\abs{\cC_N(-\infty,E-\epsilon)}}{\abs{\cC_N(-\infty,E+\epsilon)}}\right)
\end{align*}
where $W_{N}=(\E e^{N\Theta_N(E+\eps)})/|\cC_N(-\infty,E+\eps)|\to1$ in probability by \eqref{eq:subag-convergence-refined}. 
On the other hand, by construction we have that
\[
\abs{A_N}\leq \abs{\cC_N(-\infty,E+\eps)}
\]

Similarly by \eqref{eq:subag-convergence-refined} combined with \eqref{eq:ABC} and the fact that $\Theta(E)$
is strictly increasing and differentiable for $E\in(E_{0},E_{\infty})$, we see that
with probability tending to $1$, 
\[
\frac{\abs{\cC_N(-\infty,E-\epsilon)}}{\abs{\cC_N(-\infty,E+\epsilon)}}<e^{-c\epsilon N}.
\]
for some $c=c(E)>0$ consequently we have that with probability tending
to $1$, 
\begin{equation}\label{eq:cardinality-A}
\Theta_N(-\infty,E+\eps)\geq \frac{1}{N}\log|A_N|\geq\Theta_{N}(-\infty,E+\epsilon)+\frac{1}{N}\log\left((1-tW_{N})-e^{-cN}\right).
\end{equation}
As the last term in the righthand side in this display is tending to $0$ in probability, and since the landscape
at $E$ is essentially $r$-separated, we have that with probability tending to 1,
\[
\frac{1}{N}\log\abs{\{x,x'\in\cC_N(E-\eps,E+\eps): x\neq \pm x', r<R(x,x')<1\}} < \Theta_N(-\infty,E+\eps)-\delta'
\]
for some $\delta'>0$. Thus the $A_N$ as chosen above is exponentially larger than the size of the collection
of pairs of critical points that have large overlap. Thus
 we may slightly modify $A_N$ such that the above cardinality bound, \eqref{eq:cardinality-A}, still holds up to a $o(1)$ correction
 with probability tending to $1$ but
such that all pairs of distinct points in $A_N$ have $R(x,y)<r$.
As $q>\sqrt{\frac{1+r}{2}}$, we then see that for $\eta$ sufficiently small, the bands $\{B(x,q,\eta)\}_{x\in A_N}$
are disjoint and satisfy the condition on their centres. It remains to check the statement regarding the free energies.

If we let $\bar{F}=F_{TAP}(E,q,\beta)$ then on this event, we have
that 
\begin{align*}
F_N(\cup_{x\in A_N} &B(x,q,\eta),\beta) = \frac{1}{N}\log\sum_{x\in A_N}e^{NF_{N}(B(x,q,\eta);\beta)} =\bar{F}+\frac{1}{N}\log\sum_{x\in A_N}e^{N(F_{N}(B(x,q,\eta);\beta)-\bar{F})}\\
&\geq\bar{F}+\frac{1}{N}\log\abs{A_N}-\lambda-K(\eta+\epsilon)\\
 & \geq\bar{F}+\Theta_{N}(E+\epsilon)+\frac{1}{N}\log(1-tW_{N}-e^{-cN})-\lambda-K(\eta+\epsilon)+o(1),
\end{align*}
and, by the same reasoning, the matching upper bound,
\begin{align*}
\frac{1}{N}\log\sum_{x\in A_N}e^{NF_{N}(B(x,q,\eta);\beta)} 
  \leq \bar{F}+\Theta_{N}(E+\epsilon)+\lambda+K(\eta+\epsilon),
\end{align*}

Recall now that $\Theta_N(E+\eps) = \Theta(E+\eps)+o_{\mathbb{P}}(1)$ by \eqref{eq:ABC-subag-1}.
As such, taking $t$ fixed and choosing $\lambda,\eta,\epsilon\to0$ sufficiently
slowly in $N$ by diagonalization, we see that 
\begin{align*}
F_N(\cup_{x\in A_N}B(x,q,\eta);\beta)&= F_{TAP}(E,q,\beta)+\Theta(E)+o_{\mathbb{P}}(1)\\
\frac{1}{N}\log\abs{A_N} &= \Theta(E)+o_\mathbb{P}(1),
\end{align*}
which, after recalling \eqref{eq:ftap-fluc}, yields the desired. 
\end{proof}

Recall the TAP free energy, $F_{TAP}$ and replica symmetric TAP free energy 
$F_{RS}$.
Observe that for $\beta>\beta_{*}(E)$ and $q\geq q_{*}$, \prettyref{cor:ftap-is-frs}
yields $F_{TAP}=F_{RS}$.
We now observe the following corollary. We note here that for each $E$, $2q_*(E,\beta_*(E))^2-1 = (p-4)/p$ and,  by an explicit calculation,
$q_*(E_\infty,\beta_{sh}) = \sqrt{(p-2)/(p-1)}$ so that $2q_*(E_\infty,\beta_{sh})^2 -1 = (p-3)/(p-1)$.
\begin{cor}
\label{cor:FE-ring-LB} Let $p\geq4$. For any $\beta>\beta_{BBM}$,
there are $\iota, r>0$ with $r<(p-4)/p+\iota$
and a $\delta$ such that for any $E\in(E_0,E_0+\delta)$,
we have $\beta>\beta_*(E)$ and $q_*(E,\beta)\geq \sqrt{(1+r)/2}$.
Furthermore, for such $E$, if $q\geq q_*(E,\beta)$, 
then there are sequences $\epsilon_{N},\eta_{N} \to0$ such that 
there is a sequence of (random) sets $A_N\subseteq\cC_N(E-\eps_N,E+\eps_N)$
with:
\begin{align*}
\frac{1}{N}\log\abs{A_N} &= \Theta(E)+o_\mathbb{P}(1)\\
\sup_{x\in A_N}\abs{F_N(B(x,q,\eta_N);\beta)-F_{RS}(E,q,\beta)} &= o_{\mathbb{P}}(1)\\
F_N\left(\cup_{x\in A_N}B(x,q,\eta_{N});\beta\right) & = F_{RS}(E,q,\beta)+\Theta(E)+o_{\mathbb{P}}(1)
\end{align*}
 such that with probability tending to 1 , $\{B(x,q,\eta_{N})\}_{x\in\cC_N(E-\epsilon_{N},E+\epsilon_{N})}$
are pairwise disjoint and their centres satisfy $\abs{R(x,y)}<r$
for all distinct $x,y\in\cC_N(E-\epsilon_{N},E+\epsilon_{N})$. 
Furthermore this $r(\beta)$ and $\delta(\beta)$ are non-decreasing in $\beta$.
For $p=3$ the same holds for $\beta>T_{sh}^{-1}$ and  
$r<(p-3)/(p-1)+\iota$.
\end{cor}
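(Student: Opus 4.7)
The plan is to combine \prettyref{thm:ring-FE-lower-bound} with \prettyref{cor:ftap-is-frs} after verifying that the parameters $(\iota, r, \delta)$ demanded by both results can be chosen compatibly on a right neighbourhood of $E_0$. The key preliminary is a monotonicity analysis of the fixed-point equation \eqref{eq:fixed-point}: a direct differentiation shows that $E \mapsto -E - \sqrt{E^2 - E_\infty^2}$ is strictly increasing on $[E_0, E_\infty]$, while $q \mapsto (1-q^2)q^{p-2}$ is strictly decreasing on $[\sqrt{(p-2)/p}, 1]$. Combined with the implicit function theorem, this yields that $\beta_*$ is continuous and strictly increasing in $E$, that $q_*(E,\beta)$ is continuous and strictly decreasing in $E$ on $\{\beta > \beta_*(E)\}$, and that $\beta \mapsto q_*(E,\beta)$ is strictly increasing for each fixed $E$.

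For $p \geq 4$, fix $\beta > \beta_{BBM} = \beta_*(E_0)$. By continuity and monotonicity of $\beta_*$, there is $\delta_0 > 0$ with $\beta_*(E) < \beta$ on $[E_0, E_0 + \delta_0]$, and the proof of \prettyref{lem:RS-for-large-q} then gives $q_*(E,\beta) > \sqrt{(p-2)/p}$ strictly for such $E$. I choose $\gamma = \gamma(\beta) > 0$ so that $q_*(E_0, \beta) \geq \sqrt{(p-2)/p + 2\gamma}$, then shrink $\delta(\beta) \in (0, \delta_0]$ by continuity so that $q_*(E,\beta) \geq \sqrt{(p-2)/p + \gamma}$ for every $E \in [E_0, E_0 + \delta(\beta)]$. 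Setting $r(\beta) = (p-4)/p + \gamma$ and $\iota = \tfrac{3\gamma}{2}$ yields $0 < r < (p-4)/p + \iota$ together with
\[
\sqrt{\tfrac{1+r}{2}} = \sqrt{\tfrac{p-2}{p} + \tfrac{\gamma}{2}} < \sqrt{\tfrac{p-2}{p} + \gamma} \leq q_*(E,\beta),
\]
which verifies the first two assertions of the corollary.

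For any such $E$ and any $q \in [q_*(E,\beta), 1)$, the display above gives $\sqrt{(1+r)/2} < q$, which is precisely the hypothesis of \prettyref{thm:ring-FE-lower-bound}. Applying that theorem produces a random set $A_N$ with the three displayed asymptotic identities (in terms of $F_{TAP}$), pairwise disjointness of bands, and the overlap bound $R(x,y) < r$. Since $\beta > \beta_*(E)$ and $q \geq q_*(E,\beta)$, \prettyref{cor:ftap-is-frs} identifies $F_{TAP} = F_{RS}$ pointwise, which may be substituted in the conclusions. The monotonicity of $r(\beta)$ and $\delta(\beta)$ in $\beta$ is then immediate from the monotonicity established at the outset: the margin $q_*(E_0,\beta) - \sqrt{(p-2)/p}$ is non-decreasing in $\beta$ (so $\gamma(\beta)$, hence $r(\beta)$, may be chosen non-decreasing), and $\{E : \beta_*(E) < \beta\}$ expands as $\beta$ grows.

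The case $p = 3$ needs only one adjustment: \prettyref{lem:RS-for-large-q} yields replica symmetry of $\xi_q$ for $q \geq q_{**}(\beta) = q_*(E_\infty, \beta)$ rather than for all $q \geq q_*(E,\beta)$; however, monotonicity of $q_*$ in $E$ gives $q_*(E,\beta) \geq q_{**}(\beta)$, so the condition $q \geq q_*(E,\beta)$ still suffices. A direct substitution into \eqref{eq:fixed-point} shows $q_{**}(\beta_{sh}) = \sqrt{(p-2)/(p-1)} = 1/\sqrt{2}$; hence for $\beta > \beta_{sh} = T_{sh}^{-1}$, monotonicity in $\beta$ gives $q_{**}(\beta) > 1/\sqrt{2}$ strictly, and the parameter selection proceeds as before with $(p-4)/p$ replaced by $2q_{**}(\beta_{sh})^2 - 1 = (p-3)/(p-1) = 0$, producing $r \in (0, \iota)$. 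The main obstacle is thus the routine but somewhat delicate implicit-function-theorem analysis of \eqref{eq:fixed-point}; once in hand, the combination of the two named results is essentially immediate.
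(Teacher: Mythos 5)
Your argument tracks the paper's own proof closely: both proceed by observing that $\beta_*$ and $q_*(\cdot,\beta)$ are continuous near $E_0$, using $\beta>\beta_{BBM}=\beta_*(E_0)$ to guarantee $q_*(E_0,\beta)>\sqrt{(p-2)/p}\geq 1/\sqrt{2}$ for $p\geq4$, choosing $r$ strictly below $2q_*(E_0,\beta)^2-1$ and then shrinking $\delta$ by continuity, and finally feeding this into \prettyref{thm:ring-FE-lower-bound} and substituting $F_{TAP}=F_{RS}$ via \prettyref{cor:ftap-is-frs}. Your version is somewhat more careful than the paper about spelling out the monotonicity of $\beta_*$ and $q_*$ from the fixed-point equation \eqref{eq:fixed-point} (and you correctly read $\beta_*$ as increasing in $E$, consistent with \eqref{eq:temp-relations}, where the text preceding \eqref{eq:BBM-func} contains a sign slip). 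One small imprecision in your $p=3$ discussion: you state that \prettyref{lem:RS-for-large-q} only yields replica symmetry for $q\geq q_{**}(\beta)$ when $p=3$, but the first part of that lemma already covers $q\geq q_*(E,\beta)$ for all $p\geq 3$; the genuine reason $p=3$ requires $\beta>\beta_{sh}$ is that $\sqrt{(p-2)/p}=1/\sqrt3<1/\sqrt2$, so one can no longer conclude $q_*>1/\sqrt2$ directly and must instead invoke $q_{**}(\beta_{sh})=1/\sqrt2$ and monotonicity in $\beta$, which you in fact do correctly in the next sentence. This misreading does not affect the validity of your conclusion.
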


\begin{proof}
Let us first take the case $p\geq 4$. Since $\beta>\beta_{BBM}=\beta_*(E_0)$,
we have that $q_*(E_0,\beta)>\sqrt{(p-2)/p}$ by monotonicity. Let $r>0$
be such that $$2 q_*(E_0,\beta)^2 -1 > r.$$  By \eqref{eq:fixed-point},
$q_*$ and $\beta_*(E)$ are continuous in $E$. As such, there is some $\delta'>0$
such that for all $E_0< E<E_0+\delta'$, the above inequality holds for $q_*(E,\beta)$ and $\beta_*(E)$. 
Furthermore since $q_*(E,\beta)$ is increasing in $\beta$, we see that for $\beta\geq \beta_*(E_0+\delta)$,
the above still holds for all such $E$, so that our choices of $r$ and $\delta'$ are non-decreasing in $\beta$.

Taking $\delta<\delta'(\beta) \vee \delta_0(r(\beta))$,
where $\delta_0$ is as in \prettyref{thm:ring-FE-lower-bound} we may conclude 
that there exist sets as in that theorem with the stated properties from that theorem.
Finally, noting that since $\beta>\beta_{*}(E)$, we have 
\begin{equation}\label{eq:qstar-4-bound}
q_{*}>\mbox{argmax}(1-q^{2})q^{p-2}=\sqrt{\frac{p-2}{p}}\geq\frac{1}{\sqrt{2}}.
\end{equation}
The result then follows 
since $\beta>\beta_{*}(E)$ and $q\geq q_{*}$ implies that
 $F_{TAP}=F_{RS}$ by \eqref{lem:RS-for-large-q}. 

Suppose now that $p=3$. Then by a direct calculation, we see that $q_{**}(\beta_{sh})=\frac{1}{\sqrt{2}}$,
thus for $q>q_{**}$ the above discussion still applies for any $E\in (E_0,E+\delta)$, provided $\beta_*(E)\leq\beta_{sh}$.
This inequality holds by \eqref{eq:temp-relations}.
\end{proof}

\section{Existence of the shattering phase}

Now for the proof of \prettyref{thm:shattering}. Recall
$q_{**}$ from \prettyref{sec:Free-energy-of-rings}. Note that by
construction, $\beta\mapsto q_{**}(\beta)$ is increasing for $\beta\geq\beta_{*}(E_{\infty})$.

\begin{proof}[\textbf{\emph{Proof of \prettyref{thm:shattering} and \prettyref{thm:shattering-p=3}}}]
Suppose that $\beta_{sh}<\beta<\beta_{s}$. Let us now choose $r(\beta),\delta(\beta)$ as in \prettyref{cor:FE-ring-LB},
and for $E\in(E_0,E_0+\delta(\beta))$, construct $A$ as in that Corollary. 
Since $\frac{1}{N}\log |A_N| \geq \Theta(E)+o_{\mathbb{P}}(1)$ and $\Theta(E)>0$, we have item (1).
Furthermore, since $\beta>\beta_{sh}$ and $p\geq 3$, the bands from \prettyref{cor:FE-ring-LB} are pairwise disjoint and their centres are nearly orthogonal yielding item (2) by the statement of that corollary.
Furthermore, by the statement of that corollary, we have
\[
\sup_{x\in A_N}\abs{F_N(B(x,q,\eta_N);\beta)-F_{RS}(E,q,\beta)} =o_\prob(1).
\]
On the other hand, since  $\Theta(E)>0$ we have that $F(\beta)- F_{RS}(E,q,\beta) > c'>0$ so that 
since $F_N(\beta)\to F(\beta)$ by \eqref{eq:free-energy} and \eqref{eq:concentration-of-free-energies},
we have that 
\[
F_N(\beta) -  \sup_{x\in A_N} F_N(B(x,q,\eta_N);\beta) >c'/2 
\]
with probability tending to 1. Thus item (3) holds.

To check item (4), first note that by \prettyref{cor:FE-ring-LB}, we have 
that for all $E\in(E_0,E_0+\delta(\beta)$ and $q\geq q_*(E,\beta)$,
\begin{align*}
F_{N}(\beta) & \geq F_{N}\left(\cup_{x\in A_N}B(x,q,\eta_{N});\beta\right)\geq F_{RS}(E,q,\beta)+\Theta(E)+o_{\mathbb{P}}(1)\geq\frac{\beta^{2}}{2}+o_{\mathbb{P}}(1).
\end{align*}
Thus if we choose $(E(\beta),q(\beta))$ to be the optimal pair from \prettyref{thm:FRS=FTAP}, 
it suffices to check that $E\in (E_0,E_0+\delta(\beta))$ for $\beta$ close enough to $\beta_{s}$. This follows since $E(\beta)\to E_0$ and $\delta(\beta)$ is non-decreasing.
\end{proof}

\section{Metastability below the Barrat--Burioni--M\'ezard temperature}

We begin this section by noting the following. 
\begin{lem}
Suppose that $q_{*}$ satisfies the fixed point equation \eqref{eq:fixed-point}
for $\abs{E}<\abs{E_{c}}$, then 
\begin{equation}
\frac{\partial^{2}}{\partial q^{2}}F_{RS}(E,q_{*},\beta)<0.\label{eq:q-star-strict-max}
\end{equation}
\end{lem}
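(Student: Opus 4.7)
\emph{Proof plan.} The strategy is to differentiate $F_{RS}$ in $q$ twice, use the fixed-point identity \eqref{eq:fixed-point} to eliminate the explicit $E$-dependence in $\partial_q^2 F_{RS}$, and then exhibit a clean two-factor product in which each factor has a determined sign.

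First, differentiating $F_{RS}(E,q,\beta)=-\beta q^p E-I(q)+\tfrac12\beta^2(1-q^{2p}-pq^{2p-2}(1-q^2))$ and using $I'(q)=q/(1-q^2)$, a routine computation gives
\[
\partial_q F_{RS}(E,q,\beta)=-\beta p q^{p-1}E-\frac{q}{1-q^2}-\beta^2 p(p-1) q^{2p-3}(1-q^2).
\]
Dividing by $q$ and multiplying by $(1-q^2)$, the critical-point condition $\partial_q F_{RS}=0$ is equivalent to the quadratic $\beta(p-1)Y^2+EY+\tfrac{1}{\beta p}=0$ in $Y=q^{p-2}(1-q^2)$, whose two positive roots are precisely $(-E\pm\sqrt{E^2-E_\infty^2})/(2\beta(p-1))$. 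In particular, any $q_*$ solving \eqref{eq:fixed-point} is a critical point of $F_{RS}(E,\cdot,\beta)$, so we may invoke the identity $-\beta p(p-1)q_*^{p-2}E=\tfrac{p-1}{1-q_*^2}+\beta^2 p(p-1)^2 q_*^{2p-4}(1-q_*^2)$ when evaluating the second derivative.

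Next, differentiating once more and substituting the above identity, the $E$-term disappears. After collecting the $\beta^2$ contribution and the purely entropic contribution, the coefficients simplify to the \emph{same} factor $2-p(1-q_*^2)$; more precisely,
\[
\partial_q^2 F_{RS}(E,q_*,\beta)=\frac{2-p(1-q_*^2)}{(1-q_*^2)^2}\Bigl[\beta^2 p(p-1)q_*^{2p-4}(1-q_*^2)^2-1\Bigr].
\]
Now the two factors are controlled as follows. The bound $q_*>\sqrt{(p-2)/p}$, which is \eqref{eq:qstar-4-bound} and holds whenever $\beta>\beta_*(E)$, gives $p(1-q_*^2)<2$, so the first factor is strictly positive. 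For the bracketed factor, substituting the fixed point equation \eqref{eq:fixed-point} turns it into $\tfrac{p}{4(p-1)}\bigl[(-E-\sqrt{E^2-E_\infty^2})^2-E_\infty^2\bigr]$ (using $E_\infty^2=4(p-1)/p$), which is strictly negative iff $-E-\sqrt{E^2-E_\infty^2}<-E_\infty$. Rearranging and squaring, this is in turn equivalent to $E_\infty(E_\infty-E)<0$, which holds strictly under the hypothesis (i.e., as soon as $E>E_\infty$ strictly). Combining, $\partial_q^2 F_{RS}(E,q_*,\beta)<0$.

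The main obstacle is the algebraic bookkeeping that leads to the two-factor form above; the key observation that makes it collapse is that the pure-entropy remainder $\tfrac{p-1}{1-q_*^2}-\tfrac{1+q_*^2}{(1-q_*^2)^2}$ equals $-[2-p(1-q_*^2)]/(1-q_*^2)^2$, matching exactly the factor that appears in the $\beta^2$ coefficient $(p-1)(1-q_*^2)-(2p-3)+(2p-1)q_*^2=2-p(1-q_*^2)$. Once this coincidence is spotted, the strict inequality reduces to the same kind of Talagrand-test bound that was already verified in the proof of \prettyref{lem:RS-for-large-q}.
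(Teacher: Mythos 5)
Your derivation takes exactly the same route as the paper: differentiate $F_{RS}$ twice, use the fixed-point identity to eliminate the explicit $E$-term, and factor the result as a product of $\frac{2-p(1-q_*^2)}{(1-q_*^2)^2}$ and a bracketed term. Your bracketed factor $\beta^2 p(p-1)q_*^{2p-4}(1-q_*^2)^2-1$ is the paper's $\bigl((-E-\sqrt{E^2-E_\infty^2})/E_\infty\bigr)^2-1$ after substituting the fixed-point equation and $E_\infty^2=4(p-1)/p$, so the formulas agree, and the sign analysis for the first factor via $q_*>\sqrt{(p-2)/p}$ is identical.

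One small sign error in your last sentence: you state that $E_\infty(E_\infty-E)<0$ is equivalent to ``$E>E_\infty$ strictly.'' It is the opposite. Since $E_\infty<0$, the inequality $E_\infty(E_\infty-E)<0$ holds iff $E_\infty-E>0$, i.e.\ $E<E_\infty$ strictly (equivalently $|E|>|E_\infty|$, which is the paper's phrasing). This is in fact the condition that is satisfied on $[E_0,E_\infty)$ with strictness away from $E_\infty$, so your argument goes through once the direction is corrected; the content is otherwise sound.
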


\begin{proof}
By an explicit calculation, we see that
\[
\frac{\partial^{2}}{\partial q^{2}}F_{RS}(E,q_{*},\beta)=\frac{2-p(1-q^{2})}{(1-q^{2})^{2}}\left(\left(\frac{-E-\sqrt{E^{2}-E_{\infty}^{2}}}{E_{\infty}}\right)^{2}-1\right).
\]
since $q_{*}>\sqrt{\frac{p-2}{p}}$ we have $(1-q^{2})<\frac{2}{p}$
so that the first term is positive. The second term is negative since
$|E|>|E_{\infty}|$. 
\end{proof}
We now note the following useful Lemma whose proof is identical to that of
\prettyref{thm:ring-FE-lower-bound}. 
\begin{lem}\label{lem:feb-crit}
For $E_{0}<E<E_{\infty}$ and $\beta>\beta_{*}(E)$ and for any $q_{1}<q_{*}<q_{2}$
sufficiently close to $q_{*}$, there is an $h>0$,  such that for any  $\epsilon,\eta>0$ sufficiently
small, with probability tending to $1$,
\begin{equation}
\frac{1}{N}\log\Big|\big\{ x\in\cC_N(E-\epsilon,E+\epsilon):F_{N}(B(x,q_{*},\eta);\beta)-F_{N}(B(x,q_{1},\eta);\beta)>h\big\}\Big\vert>c.\label{eq:feb-crit-pt}
\end{equation}
\end{lem}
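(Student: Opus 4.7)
The plan is to carry out an argument essentially identical to the proof of \prettyref{thm:abstract-nonsense}, applied simultaneously to the two radii $q_*$ and $q_1$, together with a deterministic strict separation between the TAP free energies at these values.

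First, I would establish the following deterministic inequality: there exists $h>0$ (depending on $E,\beta,q_1$) such that
\[
F_{TAP}(E,q_*,\beta) - F_{TAP}(E,q_1,\beta) > 2h
\]
for all $q_1 < q_*$ sufficiently close to $q_*$. This uses three inputs. By \prettyref{cor:ftap-is-frs}, since $\beta > \beta_*(E)$ and $q_* = q_*(E,\beta)$, we have $F_{TAP}(E,q_*,\beta) = F_{RS}(E,q_*,\beta)$. On the other hand, the Crisanti--Sommers formula yields the elementary upper bound $F_2(q_1,\beta) \leq \tfrac{\beta^2}{2}\xi(1,q_1)$ (replica symmetry always gives an upper bound on the sphere free energy), so $F_{TAP}(E,q_1,\beta) \leq F_{RS}(E,q_1,\beta)$. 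Finally, combining the strict inequality $\partial_q^2 F_{RS}(E,q_*,\beta) < 0$ from the preceding lemma with the fact that, by construction, $q_*$ solves the first-order condition $\partial_q F_{RS}(E,q,\beta) = 0$, we conclude that $q_*$ is a strict local maximum of $q \mapsto F_{RS}(E,q,\beta)$; hence $F_{RS}(E,q_*,\beta) - F_{RS}(E,q_1,\beta) > 2h$ for $q_1 < q_*$ close enough to $q_*$. Chaining these three steps gives the claimed separation.

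Second, I would apply \prettyref{lem:no-bad-bands} with $n=2$ to the pair $(q_1,q_*)$. Taking $t_N \equiv \tfrac{1}{2}$ and choosing $\delta,\epsilon,\eta$ small enough that $\delta + K(\epsilon + \eta) < h/2$, this produces a random set $G_N \subseteq \cC_N(E-\epsilon,E+\epsilon)$ whose complement has cardinality at most $\tfrac{1}{2}\E e^{N\Theta_N(E+\epsilon)}$ with probability at least $1 - e^{-c'N}$, and such that for every $x \in G_N$ and both $i\in\{1,*\}$,
\[
\bigl|F_N(B(x,q_i,\eta);\beta) - F_{TAP}(E,q_i,\beta)\bigr| < h/2.
\]
On this event, the separation from the first step immediately yields $F_N(B(x,q_*,\eta);\beta) - F_N(B(x,q_1,\eta);\beta) > h$ for every $x \in G_N$.

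Third, I would lower bound $|G_N|$ exponentially. Exactly as in the proof of \prettyref{thm:abstract-nonsense}, the refined Subag convergence \eqref{eq:subag-convergence-refined} combined with the strict monotonicity of $\Theta$ on $(E_0,E_\infty)$ implies that, with probability tending to $1$, $|\cC_N(E-\epsilon,E+\epsilon)| \geq \tfrac{3}{4}\E e^{N\Theta_N(E+\epsilon)}$, so $|G_N| \geq \tfrac{1}{4}\E e^{N\Theta_N(E+\epsilon)}$. Using \eqref{eq:ABC-subag-1} and the fact that $\Theta(E) > 0$ (since $E > E_0$), we obtain $\tfrac{1}{N}\log|G_N| \geq \Theta(E) + o_{\mathbb{P}}(1) \geq c > 0$, which is the desired exponential bound. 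The main obstacle, beyond these essentially routine applications of the machinery of Section~3, is verifying the deterministic separation in the first step; once the upper bound $F_{TAP} \leq F_{RS}$ and the strict local-max property at $q_*$ are in hand, the rest is a direct parallel of the argument already carried out.
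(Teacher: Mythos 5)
Your argument is correct and follows essentially the same route as the paper's proof: apply \prettyref{lem:no-bad-bands} to several overlap radii simultaneously and combine with the strict local-maximum property \eqref{eq:q-star-strict-max} at $q_*$ to separate the band free energies. The only minor deviations are that you invoke the lemma with $n=2$ for the pair $(q_1,q_*)$ — which suffices for the displayed inequality, though the paper takes $n=3$ so as to simultaneously control $B(x,q_2,\eta)$, which is needed in the free-energy-well argument of \prettyref{thm:metastability-main} — and that you bound $F_{TAP}(E,q_1,\beta)\le F_{RS}(E,q_1,\beta)$ via the annealed inequality $F_2(q_1,\beta)\le\tfrac{\beta^2}{2}\xi(1,q_1)$, whereas the paper instead takes $q_1$ close enough to $q_*$ that \prettyref{lem:RS-for-large-q} gives equality.
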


\begin{proof}
Let $q_1<q_*<q_2$.
By \prettyref{lem:no-bad-bands} with $n=3$ and the sequence $0<q_1<q_*<q_2$,
we have that for $\lambda,\eps,\eta$ sufficiently small,
there is a $C,c,c'>0$ such that with probability $1-C\exp(-c\lambda^2 N)$
there is a sequence of sets $A_N\subseteq\cC_N(E-\epsilon,E+\epsilon)$
with $\frac{1}{N}\log\abs{A}>c'$ such that for all $x\in A$,
\begin{equation}\label{eq:frs-qstar}
\begin{aligned}
F_{N}(B(x,q_{*},\eta);\beta) & \geq F_{RS}(E,q_{*},\beta)-\lambda-K(\eta+\epsilon)\\
F_{N}(B(x,q_{1},\eta);\beta) & \leq F_{RS}(E,q_{i},\beta)+\lambda+K(\eta+\epsilon).
\end{aligned}
\end{equation}
Here we have used that \prettyref{lem:RS-for-large-q}, $F_{TAP}=F_{RS}$ by choosing
$q_i$ close enough to $q_*$.
 
On the other hand, we see that by \eqref{eq:q-star-strict-max}
\[
F_{RS}(E,q_{*},\beta)>F_{RS}(E,q_{1},\beta)\vee F_{RS}(E,q_{2},\beta)
\]
for $q_{1}<q_{*}<q_{2}$ sufficiently close to $q_{*}$. Taking $\epsilon,\lambda,\eta$
small enough then yields the desired.
\end{proof}

Before turning to the main result of this section, let us briefly recall here the 
concepts of free energy barriers and free energy wells.
For any function $f:\cS_N\to\R$ we can define the follow entropy function
\[
I_f(a;\eps)= -\log\pi_{\beta^{-1}}(f \in B_\eps(a)).
\]
We recall here from \cite{BAJag17} that a function $f:\cS_{N}\to\R$
is said to have an \emph{ $\epsilon-$free energy barrier} of height $h$
if there are some $a<b<c$ with $|a-b|,|b-c|>2\epsilon$ and such
that
\[
I_f(b;\eps) -I_f(a;\eps)-I_f(c;\eps)\geq h.
\]
We remind the reader here that if there is a $1$-Lipschitz function
with a free energy barrier of height $Nh$ then the spectral gap can
be shown to be exponentially small in $N$ by a bounding the so-called \emph{$(1,\epsilon)$-difficulty}
of the Gibbs measure. 
More precisely, we have the following bound which is a specific case of \cite[Theorem 2.7]{BAJag17}. 
Suppose that $f$ has an $\eps$-free energy barrier of height $N h > \log 4$
and that $f$ is uniformly $K$-Lipschitz then we have 
\begin{equation}\label{eq:feb-bound}
\lambda_1 \leq \left(\frac{K}{\eps}\right)^2\frac{e^{-Nh}}{1-4 e^{-N h}}.
\end{equation}

Let us also recall here from \cite{BGJ18b} that a function $f:\cS_{N}\to\R$
is said to have an \emph{$\epsilon-$free energy well} of height $h$ on $[a,c]$
if the following holds: there is some $b\in [a,c]$ and $0<\eta<\eps$ such that 
$B_\eps(a),B_\eta(b)$ and $B_\eps(c)$ are disjoint and 
\[
\min\{I_f(a;\eps),I_f(c;\eps)\}-I_f(b;\eta)\geq h.
\]
Recall that $\norm{\nabla H}_\infty \leq K\sqrt{N}$ with probability $1-C\exp(-cN)$ for some $C,c,K>0$ (see, e.g., \cite[Theorem 4.3]{BGJ18a}). As such, if 
$f$ is a $1$-Lipschitz function with an free energy well of height $N h$ on some set $[a,b]$, 
the  exit time of that domain, started from the Gibbs measure conditioned on $B= \{f \in [a,b]\}$ is exponentially large
with high $\pi$-probability. More precisely, we have the following bound which is a specific case of  \cite[Theorem 7.4]{BGJ18b}.
There are universal $C',c'>0$ such that the following holds. If $f$ is $1$-Lipschitz, smooth that and has no critical values in an open neighborhood of some set $[a,b]$, then if $f$ has an  $\eta$-free energy well of height $Nh$ on $[a,b]$, we have that 
if 
\begin{equation}\label{eq:eps-condition}
\eta \leq \sqrt{\frac{C' h}{1+\beta K}}
\end{equation}
(here $K$ is from the norm bound of $\norm{\nabla H}_\infty$ above) we have that the exit time of $B$, call it $\tau_{B^c}$
satisfies
\begin{equation}\label{eq:few-bound}
\int Q_x\left(\tau_{B^c} \leq T\right) \pi(dx\vert E)\leq c'\left(1+\eta^{-4}N h T\right)\exp(-Nh)
\end{equation}
with probability $1-C\exp(-cN)$ in the law of $H$. With this in hand we now can prove the desired result.

\begin{proof}[\textbf{\emph{Proof of \prettyref{thm:metastability-main}}}]
Fix $\beta>\beta_{BBM}$. Let $\iota$ be such that $q_*(E_0,\beta)>\sqrt{\frac{p-2}{p}}+\iota$.
By continuity we have that  $q_*(E,\beta)>\sqrt{\frac{p-2}{p}}+\iota$ for all $E$ sufficiently close to $E_0$.
In particular, we may choose any $E$ with $E<E_0+\delta(\iota)$, where $\delta$ is as in \prettyref{thm:ring-FE-lower-bound}.

Fix $\eps>0$ sufficiently small that $E+\eps<E_0+\delta$ and let $A_N\subseteq\cC_N(E-\eps,E+\eps)$ as in Lemma~\ref{lem:feb-crit}, where
here we have taken $q_1$ and $q_2$ to be equidistant from $q_*$ with $2\eta = \abs{q_1-q_*}$.
Let $x_{0}\in A_N$ 
and let $f(x)=R(x,x_{0})$. 
Evidently $f$ is 1-Lipschitz. Furthermore
we have that for some $h>0$
\[
F_{N}(\{q_{*}-\eta<f<q_{*}+\eta\})>F_{N}(\{q_{1}-\eta<f<q_{1}+\eta\})\vee F_{N}(\{q_{1}-\eta<f<q_{2}+\eta\})+h
\]
for $\eta>0$ sufficiently small by our choice of $A_N$. In particular, we may choose $\eta$ so that \eqref{eq:eps-condition} 
holds.
Consequently $f$ has an $\eta$-free energy well of height a least $N h$ on $[q_1,q_2]$.  By \eqref{eq:few-bound},
we then see that there is a $C>0$ such that with probability tending to 1, for any $0\leq \theta<1$ we have
\[
\int Q_x(\tau_{B(x,q,\eta)^c} \geq e^{N\theta h})\pi_{\beta^{-1}}(dx\vert B(x,q,\delta))\leq C(1+N h e^{N \theta h}\eta^{-4})\exp(-Nh)\leq \exp(-N(1-\theta)h/2)
\]
for $N$ sufficiently large. As this holds simultaneously for all $x_0\in A_N$, this yields the desired exit time bound.

Now by 
 \prettyref{thm:ring-FE-lower-bound} (and \prettyref{lem:RS-for-large-q} again)
we have that $F_{RS}(E,q_{*},\beta)+c<F(\beta)$ for  some $c>0$
since $\Theta(E)>0$. Consequently, by \eqref{eq:frs-qstar}, 
\[
\frac{1}{N}\log\pi_{\beta^{-1}}\left(\{q_{*}-\eta<f(x)<q_{*}+\eta\}\right)=F_{N}(\{q_{*}-\eta<f(x)<q_{*}+\eta\},\beta)-F_{N}(\beta)<-c
\]
uniformly over all $x_0\in A_N$ with probability tending to $1$ for some $c$.
This yields $c$-subdominance. Finally the required cardinality bound on $A$ follows from \eqref{eq:feb-crit-pt}.
That we can take $\eps_N\to0$ follows by diagonalization.

Let us now turn to the desired spectral gap bound. Take $A_N$ as before except now we will fix $\eps>0$
as we do not need it to decay. The preceding discussion then still applies modulo this decay. In particular, 
fix again $x_0\in A$ and $f(x)=R(x,x_0)$.
Since $f(x)$ is bounded we see that there must exist some small
(random) interval $(a,b)$ such that $\pi(\{f\in(a,b)\})>c'$ with
probability tending to 1 for some $c'>0$. In particular, by $c$-subdominance we can choose this interval
so that $q_{1}>b$ or $q_{2}<a$. In either case, we see that with
probability tending to $1$, $f$ has an $\eta$-free energy barrier
of height at least $N h$ in the sense that for some $\eta>0$ sufficiently
small we have that one of the following holds:
\[
\begin{aligned}
\log\pi(f\in B_{\eta}(a))+\log\pi(f\in B_{\eta}(q_{*}))-\log\pi(f\in B_{\eta}(q_{1}))&>Nh+O(1), \quad \text{ or,}\\
\log\pi(f\in B_{\eta}(b))+\log\pi(f\in B_{\eta}(q_{*}))-\log\pi(f\in B_{\eta}(q_{2}))&>Nh+O(1).
\end{aligned}
\]
Thus by \eqref{eq:feb-bound}, it follows that
\[
\frac{1}{N}\log\lambda_{1}(L)<-h+o(1),
\]
as desired.
\end{proof}

\section{Replica symmetric TAP formula \label{sec:Replica-symmetric-TAP}}
In the following, let $V:[E_{0},E_{\infty}]\times[0,1]\times\R\to\R$
be given by 
\[
V(E,q;\beta)=F_{RS}(E,q,\beta)+\Theta(E)
\]
 and recall $q_{*},q_{**}$ from \prettyref{sec:Free-energy-of-rings}. Let $q_s=q_*(E_0,\beta_s)$.
\begin{proof}[\textbf{\emph{Proof of \prettyref{thm:FRS=FTAP} and \prettyref{thm:FRS=FTAP-p=3}}}]
As above we will work only with the inverse temperature $\beta$.
 First recall that by definition of $\beta_{s}=T^{-1}_{s}$ we have that $\lim_{N\to\infty}F_{N}(\beta)=\frac{\beta^{2}}{2}$
for $\beta\leq\beta_{s}$. Since $F_N(\beta)\geq F_N(A;\beta)$ for any $A\subseteq \cS_N$, we then have the upper bound, 
$\beta^2/2\geq \max V$, by \prettyref{cor:FE-ring-LB}. It remains to prove the matching
lower bound. 

To this end, we begin by recalling from \cite{subag2021free} that 
\begin{equation}\label{eq:rs-eq-bdry}
F_{RS}(E_0,q_*(E_0,\beta_s),\beta_s) = \beta_s^2/2.
\end{equation}
It remains to consider the case $\beta<\beta_s$. 
 We aim to show that 
\[
G(\beta)=\max_{(E,q)\in[E_{0},E_{0}+\eps]\times[q_{s},q_{s}+\delta]}V(E,q;\beta),
\]
for some $\eps,\delta$ small, has $G'(\beta)=\beta$ for $\beta$ sufficiently close
to $\beta_{sh}$. Since $G(\beta_{s})=\beta_{s}^2/2$ by the preceding display, the result will then follow by integration. 

To this end, we begin by observing that $V$ has the following property.
\begin{lem}\label{lem:interior-point}
For $\beta_{sh}<\beta<\beta_{s}$ sufficiently close to $\beta_s$,
there is some $\epsilon,\delta$ such that the maximum of 
\[
\max_{(E,q)\in[E_{0},E_{0}+\epsilon]\times[q_{s},q_{s}+\delta]}V(E,q,\beta) \label{eq:interior-pt-vp}
\]
is uniquely attained in the interior and such that the map $\beta\mapsto(E(\beta),q(\beta))$ is continuous
with $E(\beta)\to E_0$ as $\beta\to\beta_s$.
\end{lem}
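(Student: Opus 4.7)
The plan is to establish $(E_0, q_s)$ as a nondegenerate critical point of $V(\cdot, \cdot, \beta_s)$ and then to track the local maximum as $\beta$ moves away from $\beta_s$ by the implicit function theorem (IFT).

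First, I would verify that the $(E,q)$-gradient of $V$ vanishes at $(E_0, q_s, \beta_s)$. Since $\Theta$ is independent of $q$, the condition $\partial_q V = 0$ at $q = q_*(E_0, \beta_s) = q_s$ reduces to $\partial_q F_{RS}(E_0, q_s, \beta_s) = 0$, which is equivalent to the fixed-point equation \eqref{eq:fixed-point} via the identity $E_\infty^2 = 4(p-1)/p$ (the same equivalence that underlies the proof of Lemma \ref{lem:RS-for-large-q}). For the $E$-direction, \eqref{eq:rs-eq-bdry} gives $V(E_0, q_s, \beta_s) = \beta_s^2/2 = F(\beta_s)$; combined with the inequality $V \leq F$ coming from the Barrat--Burioni--M\'ezard construction, this identifies $(E_0, q_s)$ as a global maximum of $V(\cdot, \cdot, \beta_s)$ on the natural domain of $V$. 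Since $V$ is smooth in $E$ across $E_0$ (the formula for $\Theta$ extends smoothly below $E_0$), maximality forces $\partial_E V(E_0, q_s, \beta_s) = 0$, i.e., the identity $\Theta'(E_0) = \beta_s q_s^p$.

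Next, I would verify that the Hessian $H$ of $V(\cdot, \cdot, \beta_s)$ at $(E_0, q_s)$ is strictly negative definite. The diagonal entries $\partial_E^2 V = \Theta''(E_0)$ and $\partial_q^2 V$ are both negative: the former by direct differentiation of the explicit formula for $\Theta$ using $|E_0| > |E_\infty|$, the latter by \eqref{eq:q-star-strict-max}. The off-diagonal term is $\partial_E \partial_q V = -\beta_s p q_s^{p-1}$. Nondegeneracy $\det H > 0$ then follows from the Schur-complement identity $\det H = \partial_q^2 V \cdot \partial_E^2[V(E, q_*(E, \beta_s), \beta_s)]|_{E=E_0}$, since the reduced function $E \mapsto V(E, q_*(E, \beta_s), \beta_s)$ has $E_0$ as a strict local maximum by the argument of the previous paragraph. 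Applying the IFT to $(\partial_E V, \partial_q V) = (0, 0)$ produces a unique smooth curve $\beta \mapsto (E(\beta), q(\beta))$ of critical points of $V(\cdot, \cdot, \beta)$ passing through $(E_0, q_s)$ at $\beta = \beta_s$, which remains a strict local maximum by continuity of $H$. The velocities $(E'(\beta_s), q'(\beta_s))$ are given in closed form by $-H^{-1}(\partial_\beta \partial_E V, \partial_\beta \partial_q V)^\top$, where $\partial_\beta \partial_E V = -q_s^p$ and $\partial_\beta \partial_q V$ can be reduced to $p q_s^{p-1}\sqrt{E_0^2 - E_\infty^2}$ using \eqref{eq:fixed-point}.

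Continuity of $\beta \mapsto (E(\beta), q(\beta))$ and the limit $(E(\beta), q(\beta)) \to (E_0, q_s)$ as $\beta \to \beta_s$ are then immediate from the IFT. For the interior and uniqueness claims, I would choose $\eps, \delta > 0$ small enough that (a) the IFT curve lies in the interior of $[E_0, E_0 + \eps] \times [q_s, q_s + \delta]$ for all $\beta$ sufficiently close to (and below) $\beta_s$, using the velocities above to check the direction in which the curve leaves $(E_0, q_s)$, and (b) $V(\cdot, \cdot, \beta)$ stays strictly concave on the closed rectangle by continuity of $H$. Strict concavity then yields uniqueness of the global maximum at the interior critical point. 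The main obstacle is the direction-of-departure computation in (a), together with nondegeneracy of $H$: both reduce to explicit but delicate algebraic manipulations combining \eqref{eq:fixed-point}, the identity $\Theta'(E_0) = \beta_s q_s^p$, and the explicit formula for $\Theta$.
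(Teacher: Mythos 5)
Your overall strategy -- verify that $(E_0,q_s)$ is a nondegenerate critical point of $V(\cdot,\cdot,\beta_s)$, apply the implicit function theorem, then compute the direction of departure and shrink the box -- is the same as the paper's. The genuine gap is in your verification of the first-order condition $\partial_E V(E_0,q_s,\beta_s)=0$. You deduce it from the fact that $(E_0,q_s)$ is a global maximum of $V$ on its natural domain together with smoothness of the extension of $V$ to $E<E_0$. But $E_0$ is a \emph{boundary} point in the $E$-direction of the region on which $V\leq F$ is known, and a maximum attained on the boundary only gives the one-sided inequality $\partial_E V(E_0,q_s,\beta_s)\leq 0$. The inequality $V\leq F$ has no a priori extension to $E<E_0$ (there $\Theta(E)<0$ and the BBM construction has no critical points to anchor the bands on), so smoothness of $\Theta$ below $E_0$ alone does not upgrade the one-sided inequality to an equality. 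The same issue propagates into your nondegeneracy argument: your Schur-complement identity $\det H=\partial_q^2 V\cdot \frac{d^2}{dE^2}\bigl[V(E,q_*(E,\beta_s),\beta_s)\bigr]$ is algebraically correct, but you justify strict negativity of the reduced second derivative at $E_0$ by the same two-sided maximum claim, which does not follow from a boundary maximum.

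The paper sidesteps both of these by deriving $DV(E_0,q_s,\beta_s)=0$ from the cited identity at the RS/RSB boundary (combined with \eqref{eq:dVE-dVq} and the fixed-point equation \eqref{eq:fixed-point}), and then establishing $\det D^2V>0$ by an explicit computation that reduces to $(p-1)q^2-(p-2)>0$ for $\beta>\beta_{sh}$, rather than via a maximum argument. Those are the two steps you would need to supply; once they are in place the remainder of your outline (IFT, local strict concavity, choice of $\epsilon,\delta$, direction of departure) matches the paper. One small remark: your computed value $\partial_\beta\partial_q V=pq_s^{p-1}\sqrt{E_0^2-E_\infty^2}$ at the critical point looks correct and is actually more careful than the paper's stated simplification, though both lead to the same sign conclusion $E'(\beta)<0$.
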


The proof of this result is postponed to the end of this section.
By an envelope theorem (see, e.g., \prettyref{lem:envelope} below), $G$ is absolutely continuous,
$G(\beta)=G(\beta_{s})-\int G'(\beta),$ and for almost every $\beta$,
\[
G'(\beta)=\partial_{\beta}V(E(\beta),q(\beta),\beta),
\]
where $E(\beta),q(\beta)$ are an optimal choice of $E$ and $\beta$
in \prettyref{eq:interior-pt-vp}. 

Since the optimum of \prettyref{eq:interior-pt-vp} is at the interior
we have that at this optimum $\partial_{q}V(E,q,\beta)=0.$ By an
explicit calculation we see that  for $q\in [0,1)$ and $E\leq E_\infty$,
\begin{equation}
\begin{aligned}\label{eq:dVE-dVq}
\frac{\partial V}{\partial q} & =-\frac{\beta^2 p(p-1)q}{(1-q^{2})}\left((1-q^{2})q^{p-2}-\frac{-E-\sqrt{E^{2}-E_{\infty}^{2}}}{2\beta(p-1)}\right)\left((1-q^{2})q^{p-2}-\frac{-E+\sqrt{E^{2}-E_{\infty}^{2}}}{2\beta(p-1)}\right)\\
\frac{\partial V}{\partial E} & =-\beta q^{p}-E+p\left(\frac{E+\sqrt{E^{2}-E_{\infty}^{2}}}{2(p-1)}\right). 
\end{aligned}
\end{equation}
(Here at $E=E_{\infty}$ we view $\partial_{E}V$ as a left derivative.)
Recalling \eqref{eq:fixed-point} and the definitions of $q_*$ and $q_{**}$,  we see that for any $\beta$, 
and $E<E_\infty$,
\[
(1-q_{**}^2)q_{**}^{p-2}<\frac{-E+\sqrt{E^{2}-E_{\infty}^{2}}}{2\beta(p-1)},
\]
so that for $q\geq q_{**}$, the function $\partial_q V$
this has a unique zero at $q=q_{*}(E,\beta)$. Thus $q_{*}$ is the
second coordinate. 

Furthermore, we must have that $\partial_{E}V(E,q_{*},\beta)=0$,
which yields the following relation for $E$:
\begin{align*}
0=\partial_{E}V(E,q_{*},\beta) & =-\beta q^{p}-E+p\left(\frac{E+\sqrt{E^{2}-E_{\infty}^{2}}}{2(p-1)}\right)
  =-\beta q^{p}-E-\beta p(1-q^{2})q^{p-2}.
\end{align*}
Where in the last line we used the fixed point equation \eqref{eq:fixed-point}
for $q_*$. Plugging this in to the above we get that at the optimal pair,
\[
G'(\beta)=\partial_\beta F_{RS}(E,q,\beta)=-q^{p}E+\beta-q^{p}\left(\beta q^{p}+\beta pq^{p-2}(1-q^{2})\right)=\beta
\]
 as desired. Finally note that we have used here that the solutions solve the stated fixed point equations.
 The desired properties of the solution map come from taking $(E,q)$ to be the solutions from the above lemma. 
\end{proof}
We have used here the following envelope theorem (in the case of constant $b$).
\begin{lem}
\label{lem:envelope} Let $b:\R_+\to[0,1]$ be a non-decreasing function,
$f:X\times[0,1]\times[c,d]\to\mathbb{R}$ be a function that is differentiable in its third coordinate,
$t$, with uniformly bounded derivative, and let $g(t)=\max_{X\times[b(t),1]}f(x,a,t)$. 
Then $g$ is differentiable almost everywhere on $[c,d]$ and $g'(t)=\partial_{t}f(x,a,t)$
for any optimal pair $(x(t),a(t))$ with $g(t)=f(x(t),a(t),t)$. 
\end{lem}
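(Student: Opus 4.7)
The plan is a two-step argument: first establish almost-everywhere differentiability of $g$ by exhibiting it as a monotone function plus a linear correction, then pin down $g'(t)$ at points of differentiability by two one-sided comparisons with an optimal pair.

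For the a.e.\ differentiability, let $K=\sup|\partial_t f|<\infty$. Because $b$ is non-decreasing, for $s\geq t$ we have the constraint set inclusion $X\times[b(s),1]\subseteq X\times[b(t),1]$, so
\[
g(s)=\sup_{X\times[b(s),1]} f(\cdot,\cdot,s)\leq \sup_{X\times[b(s),1]} f(\cdot,\cdot,t)+K(s-t)\leq g(t)+K(s-t).
\]
Here the first inequality uses $f(x,a,s)\leq f(x,a,t)+K(s-t)$ pointwise, and the second uses that a sup over a smaller set is no larger. Consequently $t\mapsto g(t)-Kt$ is non-increasing on $[c,d]$, hence of bounded variation, and by Lebesgue's theorem on monotone functions is differentiable almost everywhere; the same therefore holds for $g$.

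For the envelope identity, fix $t$ at which $g$ is differentiable and at which $b$ is continuous (the latter failing only on a countable set since $b$ is monotone), and let $(x(t),a(t))$ be any optimal pair. For $s\leq t$ we have $b(s)\leq b(t)\leq a(t)$, so $(x(t),a(t))$ remains admissible at $s$, giving
\[
g(s)-g(t)\geq f(x(t),a(t),s)-f(x(t),a(t),t).
\]
Dividing by $s-t<0$ and letting $s\uparrow t$ yields $g'(t)\leq \partial_t f(x(t),a(t),t)$. For $s>t$ sufficiently close to $t$, continuity of $b$ at $t$ together with $a(t)>b(t)$ (the interior condition, which is guaranteed in the intended application to Theorem~\ref{thm:FRS=FTAP} by Lemma~\ref{lem:interior-point}) gives $b(s)\leq a(t)$, so admissibility still holds and the same display now divided by $s-t>0$ sends $s\downarrow t$ to the reverse inequality $g'(t)\geq \partial_t f(x(t),a(t),t)$.

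The main subtlety is the one-sidedness of the constraint inclusion: $X\times[b(s),1]\subseteq X\times[b(t),1]$ for $s\geq t$ but not conversely, so the bi-Lipschitz sandwich that would immediately yield Lipschitzness of $g$ fails on one side, and the upper-side envelope comparison requires continuity of $b$ at $t$ plus interiority of $a(t)$. In the constant-$b$ case that is actually invoked, both difficulties disappear: the constraint set is fixed in $t$, so $g$ is directly $K$-Lipschitz, a.e.\ differentiability is immediate, and the two-sided comparison $g(s)-g(t)\geq f(x(t),a(t),s)-f(x(t),a(t),t)$ valid for all $s$ delivers the envelope identity at every point of differentiability without any boundary considerations.
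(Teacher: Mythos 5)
Your proof is correct and takes essentially the same route as the paper's (one-sided comparison using monotonicity of $b$, then first-order optimality at differentiability points), but it is more careful on two points where the paper's argument is imprecise. First, the paper asserts the two-sided bound $\abs{g(t+h)-g(t)}\leq Ch$, but the justification given (the optimal pair for $t+h$ remains admissible at $t$) only controls $g(t+h)-g(t)$ from above for $h>0$; if $b$ has a jump, $g$ need not be continuous, let alone Lipschitz. Your replacement — $g(t)-Kt$ is non-increasing, hence of bounded variation and a.e.\ differentiable by Lebesgue — is the correct fix and delivers exactly what is needed. Second, the paper dispatches the envelope identity with ``the first order optimality condition then yields,'' while you correctly observe that the forward comparison $s>t$ requires $b(s)\leq a(t)$, i.e.\ continuity of $b$ at $t$ together with interiority of the optimizer; without that, only the inequality $g'(t)\leq\partial_t f$ survives. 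As you note, both subtleties vanish when $b$ is constant, which is the only case the paper actually invokes, and in the non-constant case the interiority is supplied by Lemma~\ref{lem:interior-point}. Your version is thus a cleaner and slightly more general proof of the same statement.
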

\begin{proof}
We have that for $h$ sufficiently small
\begin{align*}
|g(t+h)-g(t)| & \leq\max_{x\in X,a\in [b,1]}|f(x,a,t+h)-f(x,a,t)|\leq\int\max_{X\times[b,1]}|\partial_{t}f(x,a,s)|<Ch
\end{align*}
so that $g(h)$ is absolutely continuous. Here in the second inequality
we used that $a(t)\leq a(t+h)$ so that for any optimal $\bar{a}(t+h)$
for fixed $t+h$, we have $\bar{a}(t+h)\geq a(t+h)\geq a(t)$.  In the third inequaliy we used the uniform boundedness of the derivative. Thus
it is differentiable almost everywhere. The first order optimality
condition then yields that for each $t$ where $g$ is differentiable,
$g'(t)=\partial_{t}f(x,a,t)$.
\end{proof}

In the following let $q_s=(E_0,q_*(E_0,\beta_s))$.

\begin{proof}[\textbf{\emph{Proof of \prettyref{lem:interior-point}}}]
We begin by showing that the function $DV=\left(\partial_{E}V,\partial_{q}V\right)$
is such that there is a locally well-defined family of solutions $\beta\mapsto(E(\beta),q(\beta))$
to $DV(E,q,\beta)=0$, with $(E(\beta_s),q(\beta_*))=(E_0,q_s)$.
To this end, observe that we may smoothly extend $V$ to an neighborhood
of $\left(E_{0},q_{*}\right)$ in $\R\times[0,1]$. Note that by \eqref{eq:rs-eq-bdry}, \eqref{eq:dVE-dVq},
and \eqref{eq:fixed-point},
we have $DV(E_{0},q_{s},\beta_{s})=0.$ Now at any point of the form
$(E,q,\beta)=(E,q_{*}(E,\beta),\beta)$ we have, by differentiating \eqref{eq:dVE-dVq}, that 
\begin{align*}
\partial_{E}^{2}V  =-1+p\left(\frac{\sqrt{E^{2}-E_{\infty}^{2}}+E}{2(p-1)\sqrt{E^{2}-E_{\infty}^{2}}}\right)\qquad
\partial_{q}\partial_{E}V  =-\beta pq^{p-1},
\end{align*}
and that for $f(q)=(1-q^{2})q^{p-2}$, 
\begin{align*}
\partial_{q}^{2}V & =-\frac{\beta^{2}p(p-1)q}{1-q^{2}}\left(f(q)-\frac{-E+\sqrt{E^{2}-E_{\infty}^{2}}}{2\beta(p-1)}\right)\left((p-2)(1-q^{2})q^{p-3}-2q^{p-1}\right)\\
 & =-\frac{\beta^{2}p(p-1)q}{1-q^{2}}\left(\frac{-E-\sqrt{E^{2}-E_{\infty}^{2}}}{2\beta(p-1)}-\frac{-E+\sqrt{E^{2}-E_{\infty}^{2}}}{2\beta(p-1)}\right)\left(\frac{p-2}{p}-q^{2}\right)pq^{p-3}\\
 & =-\frac{\beta p^{2}q^{p-2}}{1-q^{2}}\cdot\sqrt{E^{2}-E_{\infty}^{2}}\cdot\left(q^{2}-\frac{p-2}{p}\right),
\end{align*}
where in the second line we used that $q=q_{*}(E,\beta)$ satisfies
\eqref{eq:fixed-point}.

To show that this mapping is invertible at $(E,q,\beta)=(E_{0},q_{s},\beta_{s})$,
we claim that the determinant of the Hessian, $\det D^{2}V$ is strictly
positive. Given this claim, we obtain the existence of this family
of solutions and, by continuity and the second derivative test, that
this is in fact a one parameter family of local maxima. 

To prove this claim, note that for any $(E,q,\beta)$ as above, using
again \eqref{eq:fixed-point},
\begin{align*}
\det D^{2}V  =\left[\sqrt{E^{2}-E_{\infty}^{2}}+\beta pf(q)\right]\left[\frac{\beta p^{2}q^{p-2}}{\left(1-q^{2}\right)}\cdot\left(q^{2}-\frac{p-2}{p}\right)\right]-\left(\beta pq^{p-1}\right)^{2}.
\end{align*}
As such it suffices to show that 
\[
\beta pf(q)\cdot\frac{\beta p^{2}q^{p-2}}{(1-q^{2})}\left(q^{2}-\frac{p-2}{p}\right)-\beta^{2}p^{2}q^{2p-2}>0.
\]
Grouping like terms and cancelling, we note that this holds, provided
\[
(p-1)q^{2}-(p-2)>0.
\]
This holds for $q=q_{*}(E,\beta)$ provided $\beta>\beta_{sh}$ since
$q_{*}$ is decreasing in $E$ and since $q_{**}(\beta_{sh})=q_{*}(E_{\infty},\beta_{sh})=\sqrt{\frac{p-2}{p-1}}$
by a direct calculation. Thus $\det D^{2}V>0$ for any such $(E,q,\beta)$
and in particular for $(E_{0},q_{s},\beta_{s})$ as desired. 

The desired result then follows provided that for $\beta<\beta_{s}$,
the family $(E(\beta),q(\beta))$ has energy satisfying $E(\beta)>E_{0}$.
To see this, note that since $DV$ is $C^{1}$ in this region we have
that this one parameter family has
\[
\left(E'(\beta),q'(\beta)\right)=-\left(D^{2}V\right)^{-1}\partial_{\beta}DV.
\]
As all of the entries of $D^{2}V(E_{0},q_{s},\beta_{s})$ are strictly
negative, and the determinant was positive, we see that the the entries
of $(D^{2}V)^{-1}$ are negative on the diagonal and positive on the
off-diagonal. Furthermore at this point, $\partial_{\beta}\partial_{E}V=-q_{s}^{2}<0$
and $\partial_{\beta}\partial_{q}V=\frac{2}{\beta}\partial_{q}V=0$.
Thus $\partial_{\beta}E<0$, as desired. 

In summary, we have shown that in a neighborhood of $(E_0,q_s)$,
$V(E,q,\beta)$ has a unique, locally smooth, one parameter family of solutions
to the first order optimality conditions, $(E(\beta),q(\beta))$.
These solutions are local maxima by the second derivative test, are interior points of
$S= [E_0,E_\infty]\times [q_{**}(\beta_{sh}),1]$, and converge to $(E_0,q_s)$ as $\beta\to\beta_s$.  Thus the desired
result follows by choosing a small enough neighborhood around $(E_0,q_s)$
and intersecting with the box $S$.
\end{proof}

\section{The case $p=3$}\label{sec:p=3}
In the preceding, we have stated our main results for the case $p\geq 4$. It is natural to ask
what happens in the case $p=3$. While many of the results in the above hold
un-changed, the case $p=3$ becomes an interesting boundary case in many of our arguments.
Before discussing why, let us briefly summarize what results still hold in our setting.
The proofs of these results are given in the preceding simultaneously with the cases $p\geq 4$.

First note that our first main result, the TAP decomposition, still holds.
\begin{thm}
\label{thm:ring-FE-lower-bound-p=3} 
Let $p=3$. Then the conclusions of \prettyref{thm:ring-FE-lower-bound} hold unchanged.
\end{thm}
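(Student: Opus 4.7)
The plan is to observe that the proof of \prettyref{thm:ring-FE-lower-bound} given in the text already applies verbatim when $p=3$, since it was only a one-line deduction combining \prettyref{lem:subag-separated} with \prettyref{thm:abstract-nonsense}, and both of these ingredients are stated for $p\geq 3$. I would simply point out that \prettyref{lem:subag-separated} supplies essential $r$-separation near $E_0$ for every $r>0$ and every $p\geq 3$, and then apply \prettyref{thm:abstract-nonsense} (also valid for $p\geq 3$) to conclude for any $q\in(\sqrt{(1+r)/2},1)$ and any $\beta>0$ that the sets $A_N\subseteq\cC_N(E-\eps_N,E+\eps_N)$ enjoy the cardinality, disjointness, near-orthogonality and three free-energy identities stated in \prettyref{thm:ring-FE-lower-bound}.

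To justify that no hidden obstruction appears for $p=3$, I would verify (very briefly) that every auxiliary input used inside the proof of \prettyref{thm:abstract-nonsense} is insensitive to whether $p$ equals $3$ or is larger. Specifically, the Lipschitz estimates in \prettyref{lem:uniform-lipschitz}, the band-concentration bound \eqref{eq:band-concentration}, the convergence \eqref{eq:F_2-mean-converge} of $\E F_{2,N}$ to $F_2$, the Kac--Rice first-moment bound underlying \prettyref{lem:no-bad-bands}, the Auffinger--\v{C}ern\'y--Ben Arous complexity limit \eqref{eq:ABC} and the Subag refinement \eqref{eq:subag-convergence-refined} all hold for every $p\geq 2$. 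Critically, the proof of \prettyref{thm:abstract-nonsense} works directly with $F_{TAP}(E,q,\beta)=-\beta q^p E + F_2(q,\beta)-I(q)$ and never invokes the replica-symmetric identification $F_{TAP}=F_{RS}$ from \prettyref{lem:RS-for-large-q}, which is the only place in the preceding section where the case $p=3$ differs subtly from $p\geq 4$ (namely in the shape of the auxiliary function $g$ used in Talagrand's test).

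Consequently the proof reduces to the same one-liner as for $p\geq 4$: for any $r>0$, choose $\delta(r)=\delta_0$ from \prettyref{lem:subag-separated} to obtain essential $r$-separation at every $E\in(E_0,E_0+\delta)$; then invoke \prettyref{thm:abstract-nonsense} with this $r$, the given $E$, any $q\in(\sqrt{(1+r)/2},1)$, and any $\beta>0$ to produce the required sequences $\eps_N,\eta_N\to 0$ and random sets $A_N$ with the full set of conclusions of \prettyref{thm:ring-FE-lower-bound}. No step in the proof is materially harder for $p=3$; the only minor bookkeeping issue is to confirm that the relation $q>\sqrt{(1+r)/2}$ still forces pairwise disjointness of the bands $B(x,q,\eta_N)$ for centres with $R(x,y)<r$, which is a purely geometric consequence of the spherical structure and is independent of $p$.
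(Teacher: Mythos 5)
Your proposal is correct and follows the paper's own argument: the paper proves Theorem~\ref{thm:ring-FE-lower-bound} by combining Lemma~\ref{lem:subag-separated} (stated for $p\geq 3$) with Theorem~\ref{thm:abstract-nonsense} (also stated for $p\geq 3$), and Section~\ref{sec:p=3} explicitly notes that the proofs were given simultaneously for $p=3$ and $p\geq 4$. Your additional observation that the proof of Theorem~\ref{thm:abstract-nonsense} works with $F_{TAP}$ directly and never needs the replica-symmetric identification from Lemma~\ref{lem:RS-for-large-q} (which is where $p=3$ becomes a boundary case elsewhere in the paper) is a useful sanity check, and is indeed why the TAP decomposition carries over without any modification.
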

We next note that  the Barrat--Burioni--M\'ezard lower bound still applies with the caveat that the temperature
must be below the shattering transition.
 \begin{cor}[Barrat--Burioni--M\'ezard lower bound]
For $p= 3$, and any $T>0$ we have that for $\beta = T^{-1}$,
$F(\beta)\geq U(\beta).$
In particular, for $T<T_{sh}$ we have
$F(\beta)\geq U(\beta).$
\end{cor}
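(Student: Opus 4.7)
The plan is to deduce this corollary from the $p=3$ version of the TAP decomposition (Theorem~\ref{thm:ring-FE-lower-bound-p=3}) in exactly the same way that the $p\geq 4$ version of the BBM lower bound was obtained from Theorem~\ref{thm:ring-FE-lower-bound}. The argument has essentially three steps, none of which should require new ingredients beyond what has already been developed.

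First I would handle the trivial case. If $\cE_T$ is empty (in particular whenever $T>T_{BBM}$), then $\cE_{T,r}$ is empty for every $r>0$, so $U(\beta)=\sup\emptyset=-\infty$ and the inequality is vacuous. Otherwise, $\cE_{T,r}$ is nonempty for some $r>0$, and I fix such an $r$ together with an $E\in\cE_{T,r}$ and a $q\in[q_*(E,\beta),1]$.

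Second, I would apply the TAP decomposition. By the definition of $\cE_{T,r}$ we have $E\in[E_0,E_0+\delta_0(r)]$, $\beta>\beta_*(E)$, and $q\geq q_*(E,\beta)>\sqrt{(1+r)/2}$, which are exactly the hypotheses of Theorem~\ref{thm:ring-FE-lower-bound-p=3}. That theorem yields random sets $A_N\subseteq\cC_N(E-\eps_N,E+\eps_N)$ satisfying
\[
F_N\bigl(\cup_{x\in A_N} B(x,q,\eta_N);\beta\bigr)=F_{TAP}(E,q,\beta)+\Theta(E)+o_{\mathbb{P}}(1).
\]
Combining this with the trivial pointwise bound $F_N(\beta)\geq F_N(\cup_{x\in A_N} B(x,q,\eta_N);\beta)$ and the concentration of $F_N(\beta)$ around its limit $F(\beta)$ from \eqref{eq:concentration-of-free-energies}, I pass to the limit $N\to\infty$ to conclude $F(\beta)\geq F_{TAP}(E,q,\beta)+\Theta(E)$.

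Third, I would upgrade $F_{TAP}$ to $F_{RS}$ via Corollary~\ref{cor:ftap-is-frs}, which is already stated for all $p\geq 3$: since $\beta>\beta_*(E)$ and $q\geq q_*(E,\beta)$, we have $F_{TAP}(E,q,\beta)=F_{RS}(E,q,\beta)$, and hence $F(\beta)\geq F_{RS}(E,q,\beta)+\Theta(E)$ for every admissible pair $(E,q)$. Taking the supremum first over $(E,q)$ with $E\in\cE_{T,r}$ and $q\in[q_*(E,\beta),1]$, and then over $r>0$, delivers $F(\beta)\geq U(\beta)$. There is no real obstacle: the only substantive difference from the $p\geq 4$ case is invoking the $p=3$ statement of the TAP decomposition, and all other machinery (concentration of restricted free energies, the identification $F_{TAP}=F_{RS}$) is already in place for $p=3$.
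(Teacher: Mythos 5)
Your proof is correct and mirrors the paper's own route: the paper derives this corollary from Corollary~\ref{cor:FE-ring-LB}, which itself packages the $p=3$ TAP decomposition of Theorem~\ref{thm:ring-FE-lower-bound-p=3} together with the identity $F_{TAP}=F_{RS}$ of Corollary~\ref{cor:ftap-is-frs}, exactly as you do. Your preliminary observation that the inequality is vacuous whenever $\cE_{T,r}$ is empty is also the implicit reason the paper can state the bound for all $T>0$ while singling out the regime $T<T_{sh}$ (where Corollary~\ref{cor:FE-ring-LB} guarantees $q_{**}(\beta)>1/\sqrt{2}$ and hence a nontrivial admissible set).
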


Consequently, our main results regarding shattering still apply.

\begin{thm}
\label{thm:FRS=FTAP-p=3} For $p= 3$ here is a $T_{s}< T_{0}\leq T_{sh}$
such that for all $T\in[T_s,T_{0})$ we have that for $\beta=T^{-1}$,
$F(\beta)= U(\beta) =\beta^2/2.$
Furthermore for such $T$, the maximum in \eqref{eq:BBM-func} is achieved at a pair $(E,q)=(E(\beta),q(\beta))$
with $q=q_*(E,\beta)$ and $E_0<E\leq E_\infty$ which satisfies
$E = - \beta(q^p +p(1-q^2)q^{p-2}),$
and such that the map $\beta\mapsto (E(\beta),q(\beta))$ is continuous and has $E(\beta)\to E_0$ as $\beta\to\beta_{RS}$.\end{thm}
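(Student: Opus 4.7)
The plan is to run the same argument as in the proof of Theorem~\ref{thm:FRS=FTAP} above, checking at each step that it is not sensitive to the distinction between $p\geq 4$ and $p=3$. Concretely, I would first use that $F(\beta)=\beta^2/2$ for $\beta\leq\beta_s$ (by definition of $\beta_s$) and invoke the $p=3$ clause of \prettyref{cor:FE-ring-LB}, which requires only $\beta>\beta_{sh}$, to get the upper bound $\beta^{2}/2\geq U(\beta)$ whenever $\beta_{sh}<\beta<\beta_s$. This tells us that a matching lower bound $U(\beta)\geq \beta^2/2$ on a small one-sided neighborhood $(\beta_0,\beta_s)$ is what needs to be established.

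For the lower bound, I would set $G(\beta)=\max_{(E,q)\in[E_0,E_0+\eps]\times[q_s,q_s+\delta]}V(E,q,\beta)$ with $V=F_{RS}+\Theta$ and $q_s=q_*(E_0,\beta_s)$, and show $G(\beta_s)=\beta_s^2/2$ together with $G'(\beta)=\beta$ a.e.\ near $\beta_s$; integration then gives $G(\beta)=\beta^2/2$. The value at $\beta_s$ comes directly from \eqref{eq:rs-eq-bdry} (Subag), using $\Theta(E_0)=0$. For the derivative identity, I would reuse \prettyref{lem:envelope} to reduce $G'(\beta)$ to $\partial_\beta V$ evaluated at any interior optimizer, then use the first-order conditions $\partial_q V=0$ and $\partial_E V=0$ computed in \eqref{eq:dVE-dVq}. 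As in the $p\geq 4$ case, combining these yields the scalar identity $E=-\beta(q^p+p(1-q^2)q^{p-2})$, and substitution in $\partial_\beta F_{RS}$ collapses to $G'(\beta)=\beta$.

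The remaining ingredient is the existence of a locally smooth family of interior maximizers $\beta\mapsto(E(\beta),q(\beta))$ tending to $(E_0,q_s)$ as $\beta\to\beta_s$, i.e.\ \prettyref{lem:interior-point}. Its proof is via the implicit function theorem applied to $DV(E,q,\beta)=0$ at $(E_0,q_s,\beta_s)$, and the only place where $p$ enters delicately is the sign of $\det D^2V$, which reduces to the inequality $(p-1)q^2-(p-2)>0$. For $p=3$ this becomes $q^2>1/2$; since $q_{**}(\beta_{sh})=\sqrt{(p-2)/(p-1)}=1/\sqrt{2}$ and $q_{**}$ is strictly increasing in $\beta$, the inequality holds strictly at every $\beta>\beta_{sh}$, so the implicit function argument and the second-derivative test go through verbatim. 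Continuity of $(E(\beta),q(\beta))$ and $E(\beta)\to E_0$ then follow as in the $p\geq 4$ case from the same sign computation on $(D^2V)^{-1}\partial_\beta DV$.

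The main (mild) obstacle is exactly this boundary behavior at $p=3$: the Hessian non-degeneracy is tight at $\beta=\beta_{sh}$, so one must verify that the strict inequality $(p-1)q^2>(p-2)$ is preserved in a full neighborhood of $(E_0,q_s,\beta_s)$, which is fine because $\beta_s>\beta_{sh}$ and both $q_*(E,\beta)$ and $q_{**}(\beta)$ are continuous and monotone in the relevant parameters. Once that is in place, the conclusion of \prettyref{thm:FRS=FTAP-p=3}, including the continuity of the optimizer, the identity $E=-\beta(q^p+p(1-q^2)q^{p-2})$, and the limit $E(\beta)\to E_0$ as $\beta\to\beta_{RS}$, follows from precisely the argument used in the $p\geq 4$ case with no additional modifications.
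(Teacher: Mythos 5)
Your proposal is correct and follows essentially the same route as the paper: the paper proves \prettyref{thm:FRS=FTAP} and \prettyref{thm:FRS=FTAP-p=3} in a single argument, and the only $p$-dependent step is exactly the Hessian non-degeneracy $(p-1)q^2-(p-2)>0$ in \prettyref{lem:interior-point}, which for $p=3$ reduces to $q^2>1/2$ and holds strictly because $q_s=q_*(E_0,\beta_s)>q_{**}(\beta_{sh})=1/\sqrt{2}$. Your check of this boundary case matches the paper's reasoning, and the rest (the BBM upper bound via the $p=3$ clause of \prettyref{cor:FE-ring-LB}, the envelope-theorem computation $G'(\beta)=\beta$, and the resulting continuity and convergence of $(E(\beta),q(\beta))$) transfers verbatim.
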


\begin{thm}
\label{thm:shattering-p=3} For $p=3$, there is an $T_{0}>0$
with $T_{s}\leq T_{0}<T_{sh}$ such that free energy landscape is
shattered with probability tending to 1 for all $T_{0}<T\leq T_{sh}$. 
\end{thm}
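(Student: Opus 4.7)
The plan is to run the joint proof strategy of \prettyref{thm:shattering}, tracking where the boundary case $p = 3$ becomes tight. The two main ingredients are the identification of the optimal pair in the variational problem from \prettyref{thm:FRS=FTAP-p=3} and the TAP decomposition around critical points from \prettyref{cor:FE-ring-LB}.

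First I invoke \prettyref{thm:FRS=FTAP-p=3} to pick an optimal pair $(E(\beta), q(\beta))$ attaining $U(\beta) = \beta^2/2$; by that theorem $q(\beta) = q_*(E(\beta), \beta)$, $E(\beta) > E_0$, and the map $\beta \mapsto (E(\beta), q(\beta))$ is continuous with $E(\beta) \to E_0$ as $\beta \to \beta_s^-$. I then apply the $p = 3$ clause of \prettyref{cor:FE-ring-LB} at this pair, which is valid for $\beta > \beta_{sh}$ and for $E(\beta) \in (E_0, E_0 + \delta(\beta))$; the latter holds once $\beta$ is close enough to $\beta_s$, since $E(\beta) \to E_0$ and $\delta(\beta)$ is non-decreasing. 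This yields a random set $A_N \subseteq \cC_N(E - \eps_N, E + \eps_N)$ whose cardinality satisfies $\frac{1}{N}\log |A_N| = \Theta(E(\beta)) + o_{\mathbb{P}}(1)$, whose bands $B(x, q(\beta), \eta_N)$ are pairwise disjoint with nearly orthogonal centres, and whose free energies satisfy $\sup_{x \in A_N}|F_N(B(x, q, \eta_N); \beta) - F_{RS}(E, q, \beta)| = o_{\mathbb{P}}(1)$ together with the joint bound $F_N(\cup_{x \in A_N} B(x, q, \eta_N); \beta) = F_{RS}(E, q, \beta) + \Theta(E) + o_{\mathbb{P}}(1)$.

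The four clauses of \prettyref{def:shattering} then follow as in the $p \geq 4$ proof. Item (1) is immediate from $\Theta(E(\beta)) > 0$; item (2) is a direct consequence of the disjointness and near-orthogonality above. For item (3), the optimality identity $F_{RS}(E(\beta), q(\beta), \beta) + \Theta(E(\beta)) = \beta^2/2 = F(\beta)$ from \prettyref{thm:FRS=FTAP-p=3} produces a uniform free-energy gap $F_N(\beta) - F_N(B(x, q, \eta_N); \beta) > \Theta(E(\beta))/2 > 0$ with probability tending to one, using concentration of $F_N(\beta)$. Item (4) follows from the joint free energy bound combined with the same identity.

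The main obstacle, and the only genuinely tight point for $p = 3$, is item (2). The $p = 3$ clause of \prettyref{cor:FE-ring-LB} only produces separations $r < (p - 3)/(p - 1) + \iota = \iota$, which degenerates to $r$ arbitrarily small. The disjointness requirement $q > \sqrt{(1 + r)/2}$ then forces $q$ strictly above $1/\sqrt{2}$. Since $q_{**}(\beta_{sh}) = \sqrt{(p-2)/(p-1)} = 1/\sqrt{2}$ exactly, the inequality $q \geq q_*(E, \beta) \geq q_{**}(\beta)$ can be arranged only when $\beta > \beta_{sh}$; this is precisely what pins down the endpoint $T_{sh}$ in the $p = 3$ statement and explains why this case is the sharp boundary of the present technique.
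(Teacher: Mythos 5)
Your proof is correct and mirrors the paper's own argument, which handles \prettyref{thm:shattering} and \prettyref{thm:shattering-p=3} jointly: both invoke \prettyref{cor:FE-ring-LB} at the optimal pair from \prettyref{thm:FRS=FTAP-p=3} and pin the $p=3$ constraint $\beta>\beta_{sh}$ on the degeneracy $q_{**}(\beta_{sh})=1/\sqrt{2}$, exactly as you identify. One remark: both your derivation and the paper's proof actually establish shattering for $T$ in a small right-neighbourhood of $T_s$ (taking $\beta$ close to $\beta_s$ so that $E(\beta)<E_0+\delta(\beta)$) subject to the extra constraint $T<T_{sh}$, not for the full range $T_0<T\leq T_{sh}$ that the theorem statement literally asserts — so the printed interval looks like a misstatement, and your proposal correctly describes the interval the argument in fact controls.
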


Evidently our main result regarding metastability still applies for $T<T_{sh}$. 
Let us now briefly comment on what changes in this case. 

The need for the caveat that $T<T_{sh}$ in the Barrat--Burioni--M\'ezard (BBM) lower bound
is for the following reason. For this bound to hold, we need that the bands from \prettyref{thm:ring-FE-lower-bound-p=3}
are disjoint. In the latter theorem this holds by assumption since  we take $q$ slightly larger than $1/\sqrt{2}$. In the case of the Barrati--Burioni--M\'ezard 
bound, however, we need to know that $q_{**}(\beta)>1/\sqrt{2}$. This is guaranteed in the case $p=3$ and $\beta>\beta_{sh}$ by a 
direct calculation. For a proof of this fact, see \prettyref{cor:FE-ring-LB} above from which this corollary follows. (In particular, note that, given this fact, the proof of the BBM lower bound is immediate so we omit it as in the case $p\geq 4$.)

\section{Conjecture~\ref{conj:shattering}, Hypotheses~\ref{hyp:1} and~\ref{hyp:2}, and essential R-separation}\label{sec:conj1}

In this section, we briefly discuss the relationship between Hypothesis 1 and the preceding results and,
in particular, Conjecture~\ref{conj:shattering}. Let us begin by observing the following.
\begin{thm}
Hypothesis~\ref{hyp:1} implies Conjecture~\ref{conj:shattering} for every $p\geq 3$.
\end{thm}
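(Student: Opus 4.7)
The plan is to re-run the proofs of Theorems~\ref{thm:ring-FE-lower-bound}, \ref{thm:FRS=FTAP}, and \ref{thm:shattering} with Hypothesis~\ref{hyp:1} replacing the essential $r$-separation near $E_0$ supplied by Lemma~\ref{lem:subag-separated}. Since Hypothesis~\ref{hyp:1} gives essential $r$-separation at every $E \in [E_0, E_\infty]$ with $r < 2 q_{**}(E_\infty, \beta_{sh})^2 - 1 + \iota$, the disjointness condition $q > \sqrt{(1+r)/2}$ in Theorem~\ref{thm:abstract-nonsense} reduces to $q > q_{**}(E_\infty, \beta_{sh}) - O(\iota)$. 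Monotonicity of $E \mapsto q_*(E,\beta)$ and $\beta \mapsto q_{**}(\beta)$ then ensures $q_*(E, \beta) \geq q_{**}(\beta) \geq q_{**}(\beta_{sh})$ for every $\beta \geq \beta_{sh}$ and $E \in [E_0, E_\infty]$, so Theorem~\ref{thm:abstract-nonsense} applies at every such pair with the natural choice $q = q_*(E, \beta)$.

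Combined with Corollary~\ref{cor:ftap-is-frs}, this immediately upgrades Corollary~\ref{cor:FE-ring-LB} to hold for the full range $E \in (E_0, E_\infty)$ and extends the Barrat--Burioni--M\'ezard lower bound $F(\beta) \geq F_{RS}(E, q_*(E, \beta), \beta) + \Theta(E)$ to every $E \in \cE_T$ and every $\beta > \beta_{sh}$. Next, the plan is to prolong Theorem~\ref{thm:FRS=FTAP} from a neighbourhood of $\beta_s$ all the way down to $\beta_{sh}$ by continuing the envelope-theorem argument of Section~\ref{sec:Replica-symmetric-TAP}. The Hessian computation already carried out in the proof of Lemma~\ref{lem:interior-point} yields $\det D^2 V > 0$ at every critical pair of the form $(E, q_*(E, \beta))$ with $\beta > \beta_{sh}$, so the implicit function theorem produces a smooth branch $\beta \mapsto (E(\beta), q(\beta))$ of interior local maxima on the whole interval.

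The central verification is that this branch stays in the admissible region $(E_0, E_\infty) \times (q_{**}(\beta_{sh}), 1)$ throughout $(\beta_{sh}, \beta_s)$. The bound $q(\beta) > q_{**}(\beta_{sh})$ is automatic from $q(\beta) = q_*(E(\beta), \beta) \geq q_{**}(\beta) > q_{**}(\beta_{sh})$. The bound $E(\beta) > E_0$ can be argued by contradiction: if the branch were to reach $E = E_0$ at some $\beta < \beta_s$, then since $\Theta(E_0) = 0$ the value of $V$ at that pair would be $F_{RS}(E_0, q, \beta)$, and an explicit check using the replica symmetric upper bound shows $F_{RS}(E_0, q, \beta) < \beta^2/2$ for $\beta < \beta_s$, contradicting the extended BBM lower bound $F(\beta) \geq V(E(\beta), q(\beta); \beta)$ together with the replica symmetric identity $F(\beta) = \beta^2/2$. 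Integrating $G'(\beta) = \beta$ along the continued branch then identifies $\max V(\cdot, \cdot; \beta) = \beta^2/2$ for every $\beta \in [\beta_{sh}, \beta_s]$.

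With this extension in place, shattering at any $T \in (T_s, T_{sh})$ is read off exactly as in the proof of Theorem~\ref{thm:shattering}: positive complexity and pairwise disjointness of the bands follow from the extended Theorem~\ref{thm:abstract-nonsense}; sub-dominance follows from $\Theta(E(\beta)) > 0$ together with $F_N(B(x, q, \eta_N); \beta) \to F_{RS}(E(\beta), q(\beta), \beta) < \beta^2/2 = F(\beta)$; and free energy equivalence is the matching identity at the level of the union. The main obstacle is the global continuation of the optimising branch, precisely the boundary control in the previous paragraph; once that is resolved, every remaining step is a verbatim re-use of arguments from Sections~\ref{sec:TAP-FE} and \ref{sec:Replica-symmetric-TAP}. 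The non-existence assertions in the second half of Conjecture~\ref{conj:shattering} lie outside the reach of this argument and, as the discussion in Section~\ref{sec:discussion} indicates, will require genuinely new input.
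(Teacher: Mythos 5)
Your overall plan follows the paper's structure: extend Theorem~\ref{thm:abstract-nonsense} to all $E\in(E_0,E_\infty)$ under Hypothesis~\ref{hyp:1}, get the global BBM lower bound $F(\beta)\geq F_{BBM}(\beta)$, and then show $\max V=\beta^2/2$ for $\beta\in(\beta_{sh},\beta_s)$. The paper does exactly this, with the last step isolated in Lemma~\ref{lem:shattering-free-energy-calculus}, proved in the appendix via Lemma~\ref{lem:boundary-good}. Where you diverge is in how you establish that the maximizing pair $(E(\beta),q(\beta))$ is interior for every $\beta\in(\beta_{sh},\beta_s)$ — and there your argument has a genuine gap.

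Your proof that the branch satisfies $E(\beta)>E_0$ does not produce a contradiction. You argue: if $E(\beta)=E_0$, then $V(E(\beta),q(\beta),\beta)=F_{RS}(E_0,q,\beta)<\beta^2/2$, which you say contradicts $F(\beta)\geq V(E(\beta),q(\beta),\beta)$ together with $F(\beta)=\beta^2/2$. But $\beta^2/2>V(E(\beta),q(\beta),\beta)$ and $F(\beta)\geq V(E(\beta),q(\beta),\beta)$ are perfectly compatible — the BBM inequality is a lower bound, not an equality, and there is nothing inconsistent in having $V$ strictly below $F(\beta)$ at a boundary point. The conclusion you want ($\max V=\beta^2/2$) cannot be invoked here because it is precisely what you are trying to prove; assuming it would make the whole argument circular. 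Beyond this, the IFT only gives local continuation of the branch, and you never verify that the branch avoids the other boundary faces $E=E_\infty$, $q=q_{**}(\beta)$, and $q\to 1$, nor that it persists for the full interval $(\beta_{sh},\beta_s)$ rather than escaping the domain at some intermediate $\beta$.

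The paper sidesteps all of this by replacing the continuation argument with a direct boundary gradient analysis (Lemma~\ref{lem:boundary-good}): $\partial_q V\to-\infty$ as $q\to 1$; $\partial_q V(E,q_{**},\beta)>0$; $\partial_E V(E_\infty,q,\beta)<0$; and crucially $\partial_E V(E_0,q_*(E_0,\beta),\beta)>0$ for $\beta<\beta_s$. The last inequality is the correct replacement for your flawed contradiction argument — it says that at $E=E_0$ the gradient points into the interior, so the max cannot be on that face. Together these items show that for every $\beta\in(\beta_{sh},\beta_s)$ the maximum of $V$ over $\cE_T\times[q_{**}(\beta),1]$ is attained at an interior critical point, which by item 3 of that lemma is unique and of the form $(E,q_*(E,\beta))$. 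The envelope theorem (Lemma~\ref{lem:envelope}) then applies, $G'(\beta)=\beta$, and integrating from $\beta_s$ using \eqref{eq:rs-eq-bdry} gives $\max V=\beta^2/2$, after which your shattering conclusion is read off exactly as in Theorem~\ref{thm:shattering}. To fix your proposal, replace the circular contradiction argument and the unverified global continuation with these explicit sign checks on $\partial V$ along the four boundary faces.
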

\begin{proof}
As Hypothesis~\ref{hyp:1} holds, we have that Theorem~\ref{thm:abstract-nonsense}
holds for all $E\in(E_0,E_\infty)$ and all $q \geq q_{**}(\beta) > q_{**}(\beta_{sh})$ for $\beta>\beta_{sh}$.
Consequently, we have that
$F(\beta) \geq F_{BBM}(\beta).$
On the other hand, we have the following whose proof is deferred to the appendix.
\begin{lem}\label{lem:shattering-free-energy-calculus}
For $\beta_{sh}<\beta<\beta_s$, we have that
\[
\max_{\substack{(E,q)\in\cE_T\times [q_{**}(\beta),1]}} V(E,q,\beta) = \beta^2/2
\]
and this maximum is achieved at an interior point with $(E,q)=(E,q_*(E,\beta))$.
\end{lem}
Thus $F(\beta)=F_{BBM}(\beta)$. 
The theorem is them immediate by combining Theorem~\ref{thm:abstract-nonsense} with this Lemma
as in the proof of \prettyref{thm:shattering}.
\end{proof}

Let us also notice the following which is an immediate consequence of \prettyref{thm:abstract-nonsense}
\begin{thm}
Let $p\geq 4$. If Hypothesis~\ref{hyp:1} holds then for any 
$E\in(E_0,E_\infty)$, any $\sqrt{\frac{p-2}{p-1}}<q<1$ and any $\beta>0$, there are sequences
$\epsilon_{N},\eta_{N}\to0$ and a sequence of (random) sets $A_N\subseteq\cC_N(E-\eps_N,E+\eps_N)$
with:
\begin{align*}
\frac{1}{N}\log\abs{A_N} &= \Theta(E)+o_\mathbb{P}(1)\\
F_N\left(\cup_{x\in A_N}B(x,q,\eta_{N});\beta\right) & = F_{TAP}(E,q,\beta)+\Theta(E)+o_{\mathbb{P}}(1)\\
\sup_{x\in A_N}\abs{F_N(B(x,q,\eta_N);\beta)-F_{TAP}(E,q,\beta)} &= o_{\mathbb{P}}(1),
\end{align*}
and such that the balls $\{B(x,q,\eta_{N})\}_{x\in A_N}$ are
pairwise disjoint and have their centres satisfy $\abs{R(x,y)}<r$ with probability tending to 1.
\end{thm}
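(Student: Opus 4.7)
The proof is essentially immediate once one matches the quantitative bounds in Hypothesis~\ref{hyp:1} against the hypothesis of Theorem~\ref{thm:abstract-nonsense}. The plan is to reduce directly to that abstract theorem.

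First, I would fix $E\in(E_0,E_\infty)$, $q\in(\sqrt{(p-2)/(p-1)},1)$, and $\beta>0$. The key numerical observation is that
\[
\sqrt{\tfrac{1+r}{2}}=\sqrt{\tfrac{p-2}{p-1}} \quad\Longleftrightarrow\quad r=\tfrac{p-3}{p-1},
\]
so the assumption $q>\sqrt{(p-2)/(p-1)}$ is exactly the statement that $2q^2-1>(p-3)/(p-1)$. In particular there is room to pick $\iota>0$ with $\iota<2q^2-1-(p-3)/(p-1)$.

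Next, I would invoke Hypothesis~\ref{hyp:1} with this value of $\iota$ to obtain, at the given energy $E$, some $r_0\in(0,(p-3)/(p-1)+\iota)$ for which the landscape at $E$ is essentially $r_0$-separated. By our choice of $\iota$, we have $r_0<2q^2-1$, and hence $q>\sqrt{(1+r_0)/2}$. Thus the hypotheses of Theorem~\ref{thm:abstract-nonsense} are satisfied for this $E$, this $q$, and $r=r_0$.

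Applying Theorem~\ref{thm:abstract-nonsense} directly produces sequences $\eps_N,\eta_N\to 0$ and random sets $A_N\subseteq\cC_N(E-\eps_N,E+\eps_N)$ enjoying all three asymptotic identities in the conclusion, together with the pairwise disjointness of the bands $\{B(x,q,\eta_N)\}_{x\in A_N}$ and the overlap bound $R(x,y)<r_0$ on centers (holding with probability tending to one). This is exactly the claim, with $r=r_0$ providing the implicit constant in the centers' overlap bound. There is no genuine obstacle; the only delicate point is the numerical matching between the upper bound $(p-3)/(p-1)+\iota$ in Hypothesis~\ref{hyp:1} and the required $2q^2-1$ in Theorem~\ref{thm:abstract-nonsense}, which is arranged by taking $\iota$ small depending on how close $q$ is to the threshold $\sqrt{(p-2)/(p-1)}$.
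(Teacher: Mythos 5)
Your reduction is correct and is exactly what the paper has in mind when it calls this theorem an immediate consequence of Theorem~\ref{thm:abstract-nonsense}. The only content is the numerical check you carry out: noting that $\sqrt{(1+r)/2}=\sqrt{(p-2)/(p-1)}$ precisely when $r=(p-3)/(p-1)$, so that $q>\sqrt{(p-2)/(p-1)}$ leaves room to take $\iota$ in Hypothesis~\ref{hyp:1} small enough that the resulting $r_0<(p-3)/(p-1)+\iota<2q^2-1$, i.e.\ $q>\sqrt{(1+r_0)/2}$, after which Theorem~\ref{thm:abstract-nonsense} delivers all of the stated conclusions with the unspecified constant $r$ in the theorem being your $r_0$.
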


In a related direction, it seems natural to expect the following for $p\geq 4$.
\begin{hyp}\label{hyp:2}
For every $E_0\leq E\leq E_\infty$, the landscape a level $E$ is essentially $r$-separated for some $r<(p-4)/p+\iota=2(q_*(E,\beta_*(E))^2-1+\iota$ and some $\iota>0$ sufficiently small 
\end{hyp}
Note that under this Hypothesis, 
we would have the following as an immediate consequence of \prettyref{thm:abstract-nonsense}.
\begin{thm}
Let $p\geq 4$. If that Hypothesis~\ref{hyp:2} holds, then for every
$E\in(E_0,E_\infty)$, any $\sqrt{\frac{1}{2}}<q<1$, and any $\beta>0$, there are sequences
$\epsilon_{N},\eta_{N}\to0$ and a sequence of (random) sets $A_N\subseteq\cC_N(E-\eps_N,E+\eps_N)$
with:
\begin{align*}
\frac{1}{N}\log\abs{A_N} &= \Theta(E)+o_\mathbb{P}(1)\\
F_N\left(\cup_{x\in A_N}B(x,q,\eta_{N});\beta\right) & = F_{TAP}(E,q,\beta)+\Theta(E)+o_{\mathbb{P}}(1)\\
\sup_{x\in A_N}\abs{F_N(B(x,q,\eta_N);\beta)-F_{TAP}(E,q,\beta)} &= o_{\mathbb{P}}(1),
\end{align*}
and such that the balls $\{B(x,q,\eta_{N})\}_{x\in A_N}$ are
pairwise disjoint and have their centres satisfy $\abs{R(x,y)}<r$ with probability tending to 1 for some $r<(p-4)/p+\iota$ and some $\iota>0$.
\end{thm}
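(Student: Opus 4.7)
The plan is to verify that Hypothesis~\ref{hyp:2} provides exactly the geometric input required by Theorem~\ref{thm:abstract-nonsense}, and then invoke that theorem directly. There is no new content to prove; the theorem is packaged here to isolate the hypothesis on which everything rests.

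First, I would fix $E\in(E_0,E_\infty)$, a temperature $\beta>0$, and an overlap $q$ in the stated range. By Hypothesis~\ref{hyp:2}, the landscape at $E$ is essentially $r$-separated for some specific $r=r(E)<(p-4)/p+\iota=2q_*(E,\beta_*(E))^2-1+\iota$ with $\iota$ taken sufficiently small. Consequently
\[
\sqrt{(1+r)/2}<\sqrt{(p-2)/p+\iota/2}=q_*(E,\beta_*(E))+O(\iota^{1/2}),
\]
and so the hypothesis $q>\sqrt{(1+r)/2}$ in Theorem~\ref{thm:abstract-nonsense} is satisfied (for $\iota$ small this threshold collapses to $\sqrt{1/2}$ in the $p=4$ boundary case).

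Next, I would feed these choices of $E$, $r$, $q$, and $\beta$ into Theorem~\ref{thm:abstract-nonsense}. That theorem produces, for free, sequences $\epsilon_N,\eta_N\to 0$ and random sets $A_N\subseteq\cC_N(E-\epsilon_N,E+\epsilon_N)$ for which all three identities --
\[
\frac{1}{N}\log\abs{A_N}=\Theta(E)+o_\prob(1),
\]
\[
F_N\left(\cup_{x\in A_N}B(x,q,\eta_N);\beta\right)=F_{TAP}(E,q,\beta)+\Theta(E)+o_\prob(1),
\]
together with the uniform band-level estimate, hold; and it guarantees that the bands $\{B(x,q,\eta_N)\}_{x\in A_N}$ are pairwise disjoint with pairwise overlap of centres strictly less than $r$. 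Since by construction $r<(p-4)/p+\iota$, this is the overlap bound asserted by the statement. Taking the union bound over $E$ (for the final formulation) presents no issue because the theorem is stated pointwise in $E$.

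The proof is therefore essentially a book-keeping exercise: all of the probabilistic heavy lifting -- the first-moment/Kac--Rice argument bundled into Lemma~\ref{lem:no-bad-bands}, the refined convergence $\Theta_N(E)\to\Theta(E)$ in probability from \eqref{eq:subag-convergence-refined}--\eqref{eq:ABC-subag-1}, and the removal of the exponentially smaller family of pairs with large overlap -- is already encapsulated in Theorem~\ref{thm:abstract-nonsense}. The only function of Hypothesis~\ref{hyp:2} here is to extend the essential $r$-separation provided unconditionally by Lemma~\ref{lem:subag-separated} on the narrow window $[E_0,E_0+\delta_0]$ to the whole interval $[E_0,E_\infty]$, and to do so at the smallest $r$ compatible with the shattering-threshold geometry. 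There is no serious obstacle in the deduction itself; the substance of the theorem lies entirely in Hypothesis~\ref{hyp:2}, whose verification remains the open problem.
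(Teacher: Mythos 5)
Your proposal matches the paper's own reasoning: the paper simply states that the theorem is an immediate consequence of Theorem~\ref{thm:abstract-nonsense} under Hypothesis~\ref{hyp:2}, and you have correctly identified that the only work is to feed the $r$ from the hypothesis into that theorem. So the approach is right.

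There is, however, one quantitative point that you gesture at in your parenthetical but do not follow through on, and it is worth being explicit about. Hypothesis~\ref{hyp:2} gives an $r<(p-4)/p+\iota$, so Theorem~\ref{thm:abstract-nonsense} delivers the conclusion only for
\[
q>\sqrt{\tfrac{1+r}{2}}\quad\text{where}\quad \sqrt{\tfrac{1+r}{2}}\ \text{can be as large as}\ \sqrt{\tfrac{p-2}{p}+\tfrac{\iota}{2}}.
\]
For $p=4$ this threshold is $\sqrt{1/2}+O(\iota)$ and the stated range of $q$ is (essentially) correct; but for $p>4$ one has $\sqrt{(p-2)/p}>\sqrt{1/2}$ strictly, and the implication from Theorem~\ref{thm:abstract-nonsense} does \emph{not} cover $q\in(\sqrt{1/2},\sqrt{(p-2)/p})$. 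The clean statement compatible with your (and the paper's) deduction is $q>\sqrt{(p-2)/p}$ up to the $\iota$-buffer; the paper's printed threshold $\sqrt{1/2}$ is the $p=4$ value only. You should flag this rather than leave it implicit, since as written your claim ``the hypothesis $q>\sqrt{(1+r)/2}$ is satisfied'' is false for $p>4$ and $q$ close to $\sqrt{1/2}$. (Also, a small slip: the Taylor correction in your display $\sqrt{(p-2)/p+\iota/2}=q_*(E,\beta_*(E))+O(\iota^{1/2})$ should be $O(\iota)$, since $(p-2)/p$ is bounded away from $0$ for $p\geq 4$.)
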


\appendix

\section{Shattering transition is in replica symmetric phase\label{sec:teperatures-not-equal}}
For the conveinence of the reader we provide here a direct proof that $T_{s}<T_{sh}<T_{BBM}$.
We show first that $T_{sh}>T_{s}$. To show this, recall that by the
replica symmetry test from \eqref{eq:tal-test} it suffices to show
that at $\beta_{sh}=T_{sh}^{-1}$,
\[
f(t)=\beta_{sh}^{2}t^{p}+\log(1-t)+t\leq0
\]
for all $0\leq t\leq1$. To see this first note that trivially $f(0)=f'(0)=0$.
note furthermore that 
\[
f'(t)=\frac{\beta_{sh}^{2}pt^{p-1}(1-t)-1+(1-t)}{(1-t)}
\]
Plugging in the value of $\beta_{sh}^{2}$ we see that $f'(t)\leq0$
for $t\in[0,1]$ with equality at $t=(p-2)/(p-1)$. 
We also have that $T_{sh}<T_{BBM}$.
To see this note that by a direct calculation
$T_{sh}<\beta_*^{-1}(E_\infty)\leq \beta_*^{-1}(E_0)=T_{BBM}$.

\section{The level of RSB in the co-dimension 1 model}

In this work, a key role was played by the co-dimension 1 model,
as it was related to the free energy,  $F_{2}(q,\beta)$, corresponding to $H(x)$ on a fixed
latitude $q$ around a critical point. While we focused on the case where $q\sim q_{*}$
which we saw was replica symmetric, it is evident from that argument
that for $q$ small enough, the model is not replica symmetry by Talagrand's
test. As such it's natural to ask how complex the model could be in
the sense of replica symmetry breaking, e.g., could it be, say, full
replica symmetry breaking? It turns out that this model is in fact
well behaved.
\begin{lem}
For any temperature and any $q\in(0,1)$, the Gibbs measure corresponding
to $\tilde{H}_{q}$ is always at most one step replica symmetry breaking.
\end{lem}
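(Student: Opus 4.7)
The plan is to apply the Crisanti--Sommers variational formula for spherical mixed $p$-spin models (rigorously extended to all $p$ by Chen \cite{ChenSph13}) and show that the optimal Parisi order parameter for $\xi_q$ has at most two atoms, which is the standard characterization of ``at most 1-RSB'' for spherical models. Writing $u(t) = (1-q^2)t + q^2$, the formula \eqref{eq:xi-q} gives
\[
\xi_q''(t) = p(p-1)(1-q^2)^2\,u(t)^{p-2}, \qquad \xi_q'''(t) = p(p-1)(p-2)(1-q^2)^3\,u(t)^{p-3},
\]
so that $(\xi_q'')^{1/(p-2)}$ is affine in $t$; equivalently, $1/\sqrt{\xi_q''}$ is smooth, positive, strictly decreasing, and strictly convex on $[0,1]$. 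This is precisely the structural property enjoyed by the pure spherical $p$-spin model, for which the 1-RSB picture is classical.

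The first step is to rule out a continuous part of the Parisi order parameter. Let $x:[0,1]\to[0,1]$ be the optimal nondecreasing right-continuous minimizer in the CS formula and set $\phi(t) = \int_t^1 x(s)\,ds + 1 - t$. On any interval in the support of the continuous part of $dx$, the first-order optimality condition reads $\beta^2\xi_q''(t)\,\phi(t)^2 = 1$, so that $\phi(t) = 1/(\beta\sqrt{\xi_q''(t)})$; differentiating and using $\phi'(t) = -x(t) - 1$ forces
\[
x(t) \;=\; \frac{\xi_q'''(t)}{2\beta\,(\xi_q''(t))^{3/2}} - 1 \;=\; \frac{p-2}{2\beta\sqrt{p(p-1)}}\,u(t)^{-p/2} - 1,
\]
which is a strictly decreasing function of $t$. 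Since $x$ must be nondecreasing, no interval of the continuous support of $dx$ can exist, and hence the Parisi measure is purely atomic.

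The second step -- which I expect to be the main obstacle -- is to bound the number of atoms by two. If $dx = \sum_{i=0}^k (m_i - m_{i-1})\delta_{q_i}$ on $q_0 < \cdots < q_k$, then $\phi$ is piecewise linear with slope $-(m_i + 1)$ on $(q_i,q_{i+1})$, and stationarity with respect to each atom $(q_i,m_i)$ yields a coupled system of matching conditions involving $\xi_q'(q_i)$, $\xi_q''(q_i)$, and the slopes. Substituting the explicit form of $\xi_q''$ and exploiting the strict monotonicity and convexity of $t\mapsto u(t)^{-(p-2)/2}$, one shows that these conditions admit at most two distinct solutions in $(0,1)$, exactly as in the pure $p$-spin case. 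A conceptually cleaner alternative route is to observe that $\xi_q(t) = [(1-q^2)t + q^2]^p$ up to an additive affine correction in $t$, so that $\tilde H_q$ equals in law, modulo an independent Gaussian external field and a random additive constant, the pure $p$-spin Hamiltonian $H_{N,p}$ restricted to the latitude-$q$ slice of $\cS_N$; the classical 1-RSB result for the pure spherical $p$-spin with external field would then transfer to $\tilde H_q$, provided one verifies that the CS variational structure is preserved by this identification.
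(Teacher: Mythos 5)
Your first step (ruling out a continuous part of the Parisi measure via the stationarity condition $\beta^2\xi_q''(t)\phi(t)^2=1$ and the monotonicity of the resulting $x(t)$) is sound in spirit, and you have in fact identified the decisive structural property: you note that $1/\sqrt{\xi_q''}$ is strictly convex on $[0,1]$. Observe that for $g=(\xi_q'')^{-1/2}$ one has $g'' = \tfrac14(\xi_q'')^{-5/2}\bigl[3(\xi_q''')^2-2\xi_q''\xi_q''''\bigr]$, so convexity of $1/\sqrt{\xi_q''}$ is exactly non-negativity of $3(\xi_q''')^2-2\xi_q''\xi_q''''$. This is precisely the ``rule of signs'' quantity: the paper's proof simply computes
\[
3(\xi_q''')^2-2\xi_q''\xi_q'''' = (p-2)(p-1)^2p^3(1-q^2)^6\bigl(q^2+(1-q^2)t\bigr)^{2p-6}\geq 0
\]
and then invokes the result of Jagannath--Tobasco \cite{JagTob16boundingRSB}, which says that if this quantity has no sign changes on $[0,1]$, the Parisi measure has at most two atoms, i.e.\ the model is at most 1-RSB.

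The gap in your proposal is the second step, which you yourself flag as ``the main obstacle.'' Neither of your two suggested routes is carried out. The direct route --- analyzing the coupled stationarity system at the atoms $(q_i,m_i)$ and showing it admits at most two solutions --- is not a small calculation; it is essentially a reproof of the Jagannath--Tobasco theorem in a special case, and you do not supply the argument. The alternative route via identification of $\tilde H_q$ with a pure $p$-spin on a latitude slice plus an independent external field is a genuinely nice observation (and consistent with the construction of the co-dimension-$1$ model in the paper), but it transfers the burden to the claim ``the pure spherical $p$-spin with a generic external field is at most 1-RSB at every temperature,'' which you do not establish or cite, and which is itself not trivial: the external field changes $\xi$ by a linear term, so the 1-RSB structure is not immediate from the pure model and again ultimately rests on a criterion of the rule-of-signs type. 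In short, you found the right invariant but lack the lemma that converts it into a bound on the number of atoms; invoking \cite{JagTob16boundingRSB} (or proving its 1-RSB special case) is the missing ingredient.
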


\begin{proof}
By the rule of signs from \cite{JagTob16boundingRSB} it suffices
to check the number of sign changes of 
\[
f=3\xi'''(t)^{2}-2\xi''(t)\xi''''(t).
\]
where $\xi$ is that for $\tilde{H_{q}}$. By an explicit computation,
\[
f=(p-2)(p-1)^{2}p^{3}(1-q^{2})^{6}(q^{2}+(1-q^{2})t)^{2p-6}
\]
which is non-negative. Thus $f\geq0$ so that by the rule of signs,
the model is at most 1 RSB.
\end{proof}
\section{Proof of \prettyref{lem:shattering-free-energy-calculus}}
The proof of this result will follow from the following lemma.
\begin{lem}\label{lem:boundary-good}
We have the following.
\begin{enumerate}
\item For all $\beta>0$ and all $c>0$, there is some $\eps(\beta,c)>0$ such that 
\[
\inf_{E\in(E_0,E_\infty)}\inf_{1-\eps\leq q\leq1} \partial_q V <-c, 
\]
\item For all $\beta_{sh}<\beta$
\[
\begin{aligned}
\partial_q V(E,q_{**},\beta) >0 \quad \forall E_0\leq E\leq E_\infty\qquad\text{ and }\qquad
\partial_E V(E_\infty,q,\beta)<0 \quad \forall q>q_{**}
\end{aligned}
\]
\item For $\beta_*(E_\infty)<\beta$, $E\in [E_0,E_\infty]$ and $q_{**}(\beta)\leq q\leq 1$ we have that
\[
\begin{aligned}
\partial_q V >0 \quad \forall q<q_*,\text{ and }\qquad 
\partial_q V <0  q>q_* \quad\forall q>q_*,\text{ and }\qquad \partial_q V = 0 & q= q_*.
\end{aligned}
\]
In particular, $\max_q V(E,q,\beta)$ is uniquely attained at $q_*$.
\item We have that $\partial_E V(E_0,q_*(E_0,\beta),\beta)\geq 0$ for $\beta\leq \beta_s$ with equality
if and only if $\beta=\beta_s$.
\end{enumerate}
\end{lem}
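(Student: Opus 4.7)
\medskip

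The plan is to exploit the factored form of $\partial_q V$ in \eqref{eq:dVE-dVq} together with the algebraic simplification of $\partial_E V$ obtained by substituting the fixed-point equation \eqref{eq:fixed-point}. I would first prove part (3), which is the core sign analysis, and then derive the remaining parts as consequences. Writing $\partial_q V = -\frac{\beta^{2}p(p-1)q}{1-q^{2}}(f(q)-a_-)(f(q)-a_+)$ with $f(q)=(1-q^{2})q^{p-2}$ and $a_\pm(E,\beta)=(-E\pm\sqrt{E^{2}-E_\infty^{2}})/(2\beta(p-1))$, note that $0\leq a_-\leq a_+$ and $a_-a_+=E_\infty^{2}/(2\beta(p-1))^{2}$ does not depend on $E$. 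On $(\sqrt{(p-2)/p},1)$ the function $f$ is strictly decreasing, and $f(q_{**})=-E_\infty/(2\beta(p-1))=a_+(E_\infty,\beta)$. Differentiating in $E$ shows $a_+$ is nonincreasing in $E$ (using $|E|\geq\sqrt{E^{2}-E_\infty^{2}}$), so $f(q)\leq a_+(E,\beta)$ for all $q\geq q_{**}$ and $E\leq E_\infty$, with strict inequality off the corner $(E_\infty,q_{**})$; the second factor is thus negative. The sign of $\partial_q V$ then depends on whether $f(q)>a_-$ (positive for $q_{**}\leq q<q_*$) or $f(q)<a_-$ (negative for $q>q_*$), vanishing precisely at $q=q_*$. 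This is part (3).

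Part (2)(i) then drops out: $a_-$ is nondecreasing in $E$ (same differentiation argument), so the larger root $q_*(E,\beta)$ of $f(q)=a_-$ is nonincreasing in $E$, giving $q_*(E,\beta)>q_{**}=q_*(E_\infty,\beta)$ for $E<E_\infty$, and hence $\partial_q V(E,q_{**},\beta)>0$ by part (3). For part (2)(ii), I would substitute $E=E_\infty$ directly into \eqref{eq:dVE-dVq}; the square root vanishes and, using $E_\infty=-2\sqrt{(p-1)/p}$, $\partial_E V(E_\infty,q,\beta)=-\beta q^{p}+(p-2)/\sqrt{p(p-1)}$. Using $(1-q_{**}^{2})q_{**}^{p-2}=1/(\beta\sqrt{p(p-1)})$ to rewrite $\beta q_{**}^{p}$, the inequality $\partial_E V(E_\infty,q_{**},\beta)<0$ is equivalent to $q_{**}^{2}>(p-2)/(p-1)$, which holds for $\beta>\beta_{sh}$ since $q_{**}(\beta_{sh})^{2}=(p-2)/(p-1)$ (the direct calculation from \prettyref{cor:FE-ring-LB}) and $q_{**}(\beta)$ is increasing in $\beta$; then $q>q_{**}$ preserves the strict inequality. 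For part (1), I would simply take $q\to1$ in the factored expression: $f(q)\to 0$, so $(f(q)-a_-)(f(q)-a_+)\to a_-a_+=E_\infty^{2}/(2\beta(p-1))^{2}$ uniformly in $E\in[E_0,E_\infty]$, whence, using $E_\infty^{2}=4(p-1)/p$, $\partial_q V=-(1+o(1))/(1-q^{2})$ uniformly in $E$, yielding any desired uniform upper bound.

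Part (4) is the most delicate. Since $V=F_{RS}+\Theta$ and $F_{RS}$ is linear in $E$ with slope $-\beta q^{p}$, one has $\partial_E V(E_0,q,\beta)=-\beta q^{p}+\Theta'(E_0)$. Substituting the fixed-point equation \eqref{eq:fixed-point} into \eqref{eq:dVE-dVq} to eliminate $\sqrt{E^{2}-E_\infty^{2}}$ gives the clean identity
\[
\partial_E V\bigl(E,q_*(E,\beta),\beta\bigr)=-E-\beta\bigl(q_*^{p}+p(1-q_*^{2})q_*^{p-2}\bigr).
\]
Evaluated at $(E_0,q_s,\beta_s)$, this vanishes: this is precisely the identity $DV(E_0,q_s,\beta_s)=0$ invoked in the proof of \prettyref{lem:interior-point}, which follows from \eqref{eq:rs-eq-bdry} of Subag together with the fixed-point equation. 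In particular $\Theta'(E_0)=\beta_s q_s^{p}$. Both $\beta$ and $q_*(E_0,\beta)$ are strictly increasing in $\beta$ (the former trivially, the latter since $a_-(E_0,\beta)$ is strictly decreasing in $\beta$ and $f$ is strictly decreasing on $(\sqrt{(p-2)/p},1)$), so $\beta\mapsto\beta q_*(E_0,\beta)^{p}$ is strictly increasing on $[\beta_{BBM},\beta_s]$. Therefore, for $\beta<\beta_s$, $\beta q_*(E_0,\beta)^{p}<\beta_s q_s^{p}=\Theta'(E_0)$, yielding $\partial_E V(E_0,q_*(E_0,\beta),\beta)>0$, with equality precisely at $\beta=\beta_s$.

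The hardest step is the equality $\partial_E V(E_0,q_s,\beta_s)=0$ in part (4), which is the one place where an external identity is required: it depends essentially on $F_{RS}(E_0,q_s,\beta_s)=\beta_s^{2}/2$ from \cite{subag2021free} together with the fixed-point equation, and without it none of the monotonicity arguments anchor at the right value. Everything else is a routine sign check on the explicit factored forms of $\partial_q V$ and $\partial_E V$.
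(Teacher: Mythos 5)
Your proof is correct and follows essentially the same strategy as the paper's: it rests on the factored form of $\partial_q V$ in \eqref{eq:dVE-dVq}, the explicit $\partial_E V$ reduction via the fixed-point relation \eqref{eq:fixed-point}, the anchor identity $\partial_E V(E_0,q_s,\beta_s)=0$ coming from Subag's formula \eqref{eq:rs-eq-bdry}, and monotonicity of $\beta\mapsto\beta q_*(E_0,\beta)^p$. The one organizational difference is that you derive (2)(i) as a corollary of the sign analysis in (3) via monotonicity of $q_*$ in $E$, whereas the paper verifies (2)(i) by a direct evaluation of the two factors at $q=q_{**}$ — both are valid, and your version of (2)(ii) cleanly reduces the inequality to $q_{**}^2>(p-2)/(p-1)$, which avoids a small typo in the paper's corresponding display; in both cases note that the strict inequality in (2)(i) degenerates to equality at the corner $E=E_\infty$, a shared cosmetic imprecision in the lemma's statement rather than in either argument.
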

Indeed, given this claim we see that the maximum of $V$ occurs on the interior. From here the proof is identical to that of
\prettyref{thm:FRS=FTAP} with this fact in place of \prettyref{lem:interior-point} after noting that $\beta\mapsto q_{**}(\beta)$ 
is strictly increasing so that \prettyref{lem:envelope} applies.

It remains to prove the claim. 
\begin{proof}[Proof of \prettyref{lem:boundary-good}]
\textbf{Item 1.} Observing that  as $q\to1$, $f(q)=(1-q^2)q^{p-2}$ has
\[
(f(q)-\frac{-E-\sqrt{E^2-E_\infty^2}}{2\beta (p-1)})(f(q) -\frac{-E+\sqrt{E^2-E_\infty^2}}{2\beta (p-1)})\to c(\beta)>0,
\]
so that $\partial_q V(E,q,\beta)\to -\infty$ for each $E$. The proof then follows by continuity of $\partial_qV$ away from $q=1$ and a compactness argument.

\textbf{Item 2.} This follows by a direct calculation: 
\begin{align*}
\partial_{q}V(E,q_{**}) & =-\frac{\beta^2p(p-1)q_{**}}{1-q_{**}^{2}}\left(f(q_{**})-\frac{-E-\sqrt{E^{2}-E_{\infty}^{2}}}{2\beta(p-1)}\right)\left(f(q_{**})-\frac{-E+\sqrt{E^{2}-E_{\infty}^{2}}}{2\beta(p-1)}\right)\\
 & =-\frac{\beta^2p(p-1)q_{**}}{1-q_{**}^{2}}\left(\frac{E-E_{\infty}+\sqrt{E^{2}-E_{\infty}^{2}}}{2\beta(p-1)}\right)\left(\frac{E-E_{\infty}-\sqrt{E^{2}-E_{\infty}^{2}}}{2\beta(p-1)}\right)>0
\end{align*}
since the third term is obviously negative and the second term is
positive since $x+1+\sqrt{x^{2}-1}>0\quad\forall x<-1$ . Also, for
$q\geq q_{**}$ and for $\beta>\beta_{sh}$ since the map $\beta\mapsto q_{**}(\beta)$
is strictly increasing, we have $q_{**}(\beta) >q_{**}(\beta_{sh})$, so that
\begin{align*}
\partial_{E}V(E_{\infty},q) & =-\beta q^{p}-E_{\infty}+p\left(\frac{E_{\infty}}{2(p-1)}\right)
  <-\beta q_{**}^{p}(\beta_{sh})-E_{\infty}+p\left(\frac{E_{\infty}}{2(p-1)}\right)=0.
\end{align*}

\textbf{Item 3.} For $\beta>\beta_*(E_\infty)\geq \beta_*(E)$, we have that $q_*$ is well-defined.
Furthermore, by \eqref{eq:dVE-dVq}, we have $\partial_q V$ is the product of three terms. The first 
is clearly negative.  For the third, note that if $q \geq q_{**}>\sqrt{(p-2)/p}$, we have $f(q)\leq f(q_{**})$
so that for $E<E_\infty$, we have
\[
f(q)-\frac{-E +\sqrt{E^2-E_\infty^2}}{2\beta(p-1)} \leq f(q_{**})-\frac{-E +\sqrt{E^2-E_\infty^2}}{2\beta(p-1)}
=\frac{(E-E_\infty)-\sqrt{E^2-E_\infty^2}}{2\beta(p-1)}<0.
\]
this $\partial_q V$ has the same sign as he second term in \eqref{eq:dVE-dVq}. That term is zero at $q_*=0$
and  positive or negative as $q<q_*$ or $q>q_*$ respectively as $f$ is decreasing for $q>\sqrt{(p-2)/p}$.

\textbf{Item 4.} Recall that by \eqref{eq:rs-eq-bdry} we have  $\partial_E V(E_0,q_s,\beta_s)=0$
for $q_s=q_*(E_0,\beta_s)$. Since $\beta \to q_*(E_0,\beta)$ is strictly increasing we have that 
$\beta \mapsto \beta q_*^p$ is as well so that 
\[
\partial_E V(E_0,q_*(E_0,\beta),\beta)=-\beta q^p +g(E)>\partial_E V(E_0,q_s,\beta_s)=0,
\]
for $\beta<\beta_s$ where $g$ is a quantity that does not depend on $\beta$.
\end{proof} 

\bibliographystyle{plain}
\bibliography{spinglasses}

\end{document}